
\documentclass[reqno]{amsart}
\usepackage{amssymb}
\usepackage{amsmath}
\usepackage[mathscr]{euscript}

\usepackage[small]{caption}
\usepackage{psfrag}
\usepackage{graphicx}
\usepackage{color}

\makeatletter
\@addtoreset{equation}{section}
\makeatother

\newtheorem{theorem}{Theorem}[section]
\newtheorem{lemma}[theorem]{Lemma}
\newtheorem{proposition}[theorem]{Proposition}
\newtheorem{corollary}[theorem]{Corollary}

\newtheorem{definition}[theorem]{Definition}
\newtheorem{remark}[theorem]{Remark}
\newtheorem{example}[theorem]{Example}

\newcommand{\mc}[1]{{\mathcal #1}}
\newcommand{\mf}[1]{{\mathfrak #1}}
\newcommand{\mb}[1]{{\mathbf #1}}
\newcommand{\bb}[1]{{\mathbb #1}}

\newcommand{\ms}[1]{{\mathscr #1}}

\newcounter{as}

\newtheorem{asser}[as]{Assertion}

\begin{document}

\title[]{A topology for limits of Markov chains}

\author{C. Landim}

\address{\noindent IMPA, Estrada Dona Castorina 110, CEP 22460 Rio de
  Janeiro, Brasil and CNRS UMR 6085, Universit\'e de Rouen, Avenue de
  l'Universit\'e, BP.12, Technop\^ole du Madril\-let, F76801
  Saint-\'Etienne-du-Rouvray, France.  \newline e-mail: \rm
  \texttt{landim@impa.br} }

\keywords{Metastability, Skorohod topology}

\begin{abstract}
  In the investigation of limits of Markov chains, the presence of
  states which become instantaneous states in the limit may prevent
  the convergence of the chain in the Skorohod topology. We present in
  this article a weaker topology adapted to handle this situation. We
  use this topology to derive the limit of random walks among random
  traps and sticky zero-range processes.
\end{abstract}

\maketitle

\section{Introduction}
\label{esec1}

Some Markov chains can be approximated by Markovian dynamics evolving
in a contracted state space.  This is the case of certain zero-range
models, whose dynamics can be approximated by the one of a random
walk, \cite{bl3, l1}, and of some polymer models whose evolution can
be approximated by a two-state Markov chain \cite{clmst, cmt, bl9,
  lt1}. We proposed in \cite{bl2, bl7} a formal definition of this
phenomenon.  In analogy to statistical mechanic models, we named these
processes metastable Markov chains, and we developed tools to prove
the convergence (of the projection on a contracted state space) of
these chains to Markovian dynamics. The erratic behavior of the
projected chain in very short time intervals precludes convergence in
the Skorohod topology. We introduce in this article a topology in
which the convergence takes place. This topology might be useful in
other contexts. For example, in the investigation of limits of Markov
chains when some states become instantaneous states in the limit, that
is, states whose jump rates become infinite. To explain the
topological problem created by the asymptotic instantaneous states and
to present the main results of the article, we examine in this
introduction sticky zero-range processes and random walks among random
traps.

\smallskip
\noindent{\bf 1. Zero-range processes.}
Fix a finite set $S_L =\{1, \dots, L\}$, and denote by $E_{L,N}$,
$N\ge 1$, the configurations obtained by distributing $N$ particles on
$S_L$:
\begin{equation*}
E_{L,N}\;:=\;\big\{ \eta\in\bb N^{S_L} : \sum_{x\in S_L} \eta_x = N \big\}\;. 
\end{equation*}

Consider an irreducible, continuous-time random walk $\{Z(t) : t\ge
0\}$ on $S_L$ which jumps from $x$ to $y$ at a rate $r(x,y)$ which is
reversible with respect to the uniform measure, $r(x,y) = r(y,x)$,
$x$, $y \in S_L$.  Fix $\alpha>1$, and let $g:\bb N\to \bb R$ be given
by
\begin{equation*}
g(0)=0\; , \quad g(1)=1\;, \quad
\textrm{and}\quad g(n) \;=\; \Big( \frac{n}{n-1} \Big)^\alpha\;,
\; n\ge 2\;,
\end{equation*}
so that $\prod_{i=1}^{n}g(i)=n^{\alpha}$, $n\ge 1$. 

Denote by $\{\eta^N(t) : t\ge 0\}$ the zero-range process on $S_L$ in
which a particle jumps from a site $x$, occupied by $k$ particles, to
a site $y$ at rate $g(k) r(x,y)$. The generator of this Markov chain
$\eta(t) = \eta^N(t)$, represented by $L_N$, acts on functions $F:
E_{L,N}\to\bb R$ as
\begin{equation*}
(L_N F) (\eta) \;=\; \sum_{\stackrel{x,y\in S_L}{x\not = y}}
g(\eta_x) \, r(x,y) \, \big\{ F(\sigma^{x,y}\eta) - F(\eta) \big\} \;,
\end{equation*}
where $\sigma^{x,y}\eta$ is the configuration obtained from $\eta$ by
moving a particle from $x$ to $y$:
\begin{equation*}
(\sigma^{x, y}\eta)_z\;=\;\left\{
\begin{array}{ll}
\eta_x-1 & \textrm{for $z=x$} \\
\eta_y+1 & \textrm{for $z=y$} \\
\eta_z & \rm{otherwise}\;. \\
\end{array}
\right. 
\end{equation*}

Fix a sequence $\{\ell_N : N\ge 1\}$ such that $1\ll \ell_N \ll N$,
where $a_N\ll b_N$ means that $a_N/b_N \to 0$.  For $x$ in $S_L$, let
\begin{equation*}
\ms E^x_N  \;:=\; \big\{\eta\in E_{L,N} : \eta_x \ge N - \ell_N \big\}\;.
\end{equation*}
Since $\ell_N/N\to 0$, on each set $\ms E^x_N$ the proportion of
particles at $x\in S_L$, $\eta_x/N$, is almost one. Assume that $N$ is
large enough so that $\ms E^x_N \cap \ms E^y_N = \varnothing$ for
$x\not = y$, and consider the partition
\begin{equation*}
E_{L,N} \;=\; \ms E^1_N \;\cup\; \cdots \;\cup\; \ms E^L_N\; \cup \;
\Delta_N\;,
\end{equation*}
where $\Delta_N$ is the set of configurations which do not belong to
the set $\ms E_N = \cup_{1\le x\le N} \ms E^x_N$.

Denote by $\pi_N$ the stationary measure of the zero-range dynamics
$\eta(t)$. We proved in \cite{bl3} that the measure $\pi_N$ is
concentrated on the set $\ms E_N$:
\begin{equation*}
\lim_{N\to\infty} \pi_N(\ms E^x_N) \;=\; \frac 1L \;\cdot
\end{equation*}
For $N>L$, let the projection $\Psi_N : E_{L,N} \to \{1,\dots, L\}
\cup\{N\}$ be defined by
\begin{equation*}
\Psi_N (\eta) \;=\; \sum_{x=1}^L x \, \mb 1\{\eta\in\ms E^x_N\} \;+\;
N \mb 1\{\eta\in\Delta_N\}\;,
\end{equation*}
and let $X_N(t) = \Psi_N(\eta(t))$, $\bb X_N(t) = X_N(t\theta_N)$,
where $\theta_N=N^{1+\alpha}$. By analogy with statistical mechanics
models, we call $\Psi_N (\eta)$ the order parameter. Note that
$X_N(t)$ is not a Markov chain.



A typical trajectory of $\bb X_N(t)$ is represented in Figure
\ref{efig2}.  The intervals $I_1$, $I_2, \dots$ correspond to the
sojourns of the rescaled process $\eta(t \theta_N)$ in a set $\ms
E^x_N$, while the time intervals $R_1$, $R_2, \dots$ correspond to the
excursions in the set $\Delta_N$. When the process $\eta (t)$ reaches
$\Delta_N$ from $\ms E^x_N$, a strong drift drives it back to $\ms
E^x_N$. With a very small probability it crosses $\Delta_N$ and
reaches a new set $\ms E^y_N$, $y\not = x$, before hitting $\ms
E^x_N$, and with a probability close to $1$ it returns to $\ms E^x_N$.
In this latter case, $\eta(t)$ remains close to the boundary between
$\ms E^x_N$ and $\Delta_N$ for an interval of time, very short
compared to the time it stays in the set $\ms E^x_N$, and crosses this
boundary a certain number of times until it is absorbed again in the
set $\ms E^x_N$. This explains the erratic behavior of $\bb X_N(t)$ in
the time intervals $R_j$.



\begin{figure}[htb]
  \centering
  \def\svgwidth{300pt}
  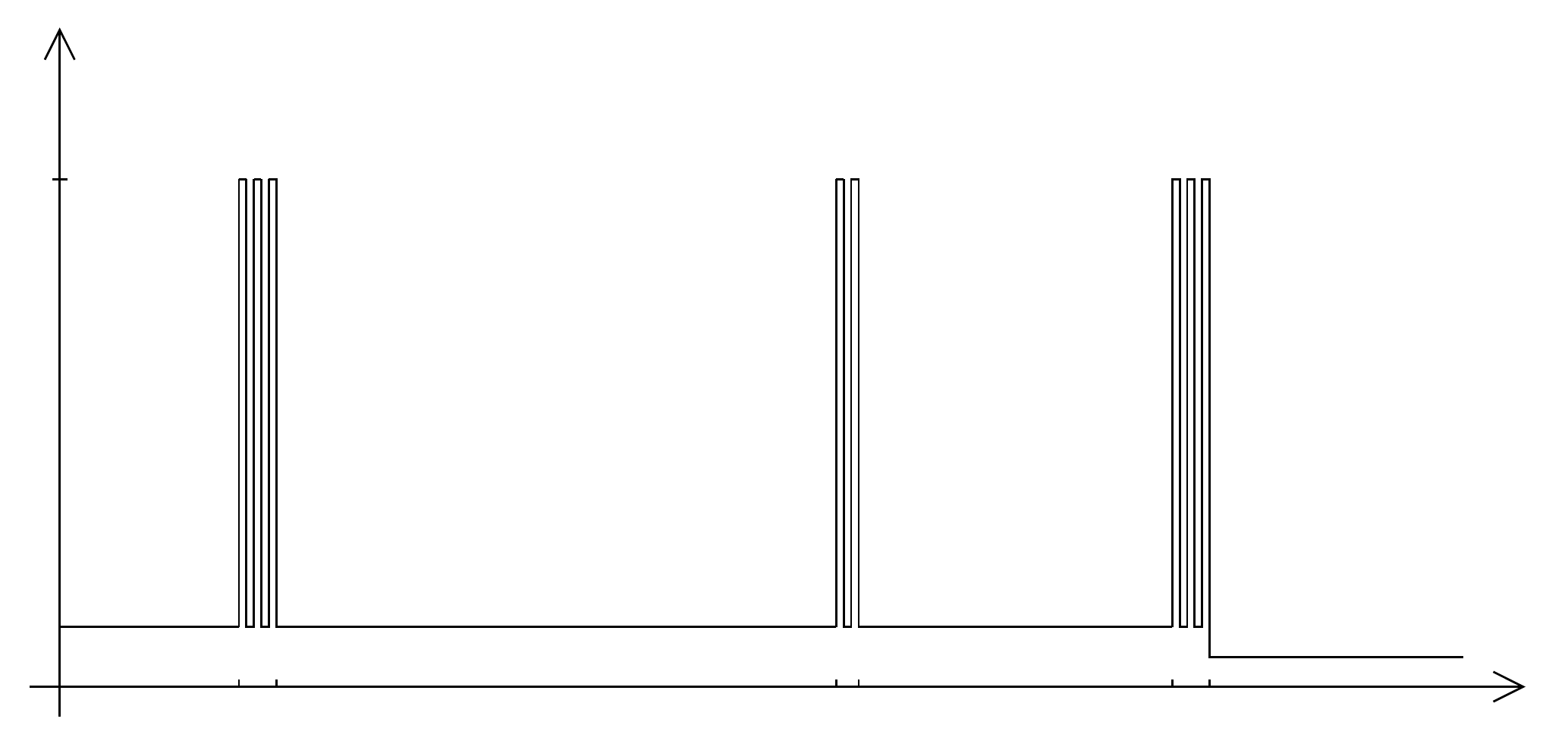
\caption{A typical trajectory of $\bb X_N(t)$.}
\label{efig2}
\end{figure}

We examined in \cite{bl3, bl7} the limit behavior of the projected
process $\bb X_N(t)$. The short excursions of $\eta^N(\theta_N t)$ in
$\Delta_N$ which correspond to short visits of $\bb X_N(t)$ to $N$
prevent the convergence of $\bb X_N(t)$ in the Skorohod topology to a
$S_L$-valued process. The same phenomenon occurs in random walks among
random traps.

\smallskip\noindent{\bf 2. Random walk among random traps.}  Denote by
$\bb T^d_N$ the discrete $d$-dimensional torus with $N^d$ points,
$d\ge 2$. We denote the sites of $\bb T^d_N$ by the letters $x$, $y$,
$z$. 

Note that the letters $x$, $y$ are used in this article to represent
three different objects. In the zero-range model, we use $x$, $y$ to
index the sets $\ms E^x_N$. In the random walk model, $x$, $y$ stand
for the sites of the torus $\bb T^d_N$, and in Sections \ref{esec5},
\ref{esec2}, \ref{esec3}, $x$, $y$ denote trajectories in paths spaces
such as $D([0,T], S_L)$.

Consider a sequence $\{W_j : j\ge 1\}$ of summable, non-increasing,
positive numbers:
\begin{equation*}
W_j\; \ge \; W_{j+1}\; >\; 0\;, \quad j\;\ge\; 1\;, \quad
\sum_{j\ge 1} W_j \;<\; \infty\;.
\end{equation*}
For each $N\ge 1$, denote by $\Psi_N : \bb T^d_N \to \{1, \dots,
V_N\}$, $V_N = |\bb T^d_N|= N^d$, a random uniform enumeration of the
sites of $\bb T^d_N$ and let $x^N_j = \Psi^{-1}_N(j)$. Let
\begin{equation*}
W^N_{x} \;=\; W_{\Psi_N(x)} \;, \quad x\in\bb T^d_N\;,
\end{equation*}
and let $\{\eta^N(t) : t\ge 0\}$ be the random walk on $\bb T^d_N$
which waits a mean $W^N_{x}$ exponential time at site $x$.  The
generator $L_N$ of this random walk is given by
\begin{equation*}
(L_N f)(x) \;=\; \frac 1{2d\, W^N_x} \sum_{y \sim x}
[f(y)-f(x)]\;, 
\end{equation*}
where the summation is carried over all nearest-neighbor sites $y$
of $x$. 

Denote by $\pi_N$ the stationary state of $\eta^N(t)$, $\pi_N(x) =
Z^{-1}_N W^N_x$, where $Z_N$ is the normalizing constant $Z_N =
\sum_{1\le j\le V_N} W_j$. Note that the stationary state is
concentrated on the first sites $x^N_1, \dots, x^N_L$. More precisely,
for every $\epsilon >0$, there exists $L\ge 1$ such that for all $N\ge
L$, $\pi_N(\{x^N_1, \dots, x^N_L\})\ge 1-\epsilon$.

To describe the asymptotic behavior of the Markov chain $\eta^N(t)$,
since the geometry of the graph $\bb T^d_N$ is lost as sites are
reshuffled for each $N$, we will examine the chain $X_N (t) = \Psi_N
(\eta^N(t))$ instead of the chain $\eta^N(t)$. 

Denote by $D(\bb R_+, \bb T^d_N)$ the set of trajectories $\mf x: \bb
R_+ \to \bb T^d_N$ which are right-continuous and have left-limits.
Denote by $\mb P_x$, $x\in \bb T^d_N$, the probability measure on
$D(\bb R_+, \bb T^d_N)$ induced by the Markov chain $\eta^N(t)$
starting from $x$. Expectation with respect to $\mb P_x$ is denoted by
$\mb E_x$.

Denote by $B(x,\ell)$, $x\in\bb T^d_N$, $\ell\ge 1$, the ball centered
at $x$ and of radius $\ell$ in the graph $\bb T^d_N$. For a subset $A$
of $\bb T^d_N$, denote by $H_A$ (resp. $H^+_A$) the hitting time of
(resp. the return time to) the set $A$:
\begin{equation*}
\begin{split}
& H_A \;=\; \inf \big \{t>0 : \eta^N(t) \in A \big\}\;, \\
& \quad H^+_A \;=\; \inf \big \{t>0 : \eta^N(t) \in A
\text{ and } \exists\; 0<s<t \,;\, \eta^N(s)\not = \eta^N(0) \big\}\;.    
\end{split}
\end{equation*}
Let
\begin{equation*}
\ell_N \;=\; \frac{N}{(\log N)^{1/4}} \;\; \text{ if $d = 2\;,$ } \quad
\ell_N \;=\;  \sqrt{N} \;\; \text{ if $d \geq 3\;,$} 
\end{equation*}
and denote by $v_N$ the escape probability from a ball of
radius $\ell_N$:
\begin{equation}
\label{e29}
v_N \;=\; \mb P_x [ H_{B(x,\ell_N)^c} < H^+_x]\;.
\end{equation}

Denote by $\mf v_d$, $d\ge 3$, the probability that a
nearest-neighbor, symmetric random walk on $\bb Z^d$ never returns to
its initial state, an let
\begin{equation*}
\theta_N \;=\;
\frac 2\pi\, \log N \;\; \text{ if $d = 2\;$, } \quad 
\theta_N \;=\; \frac 1{\mf v_d} \;\; \text{ if $d \geq 3\;$.}
\end{equation*}
By \cite[Theorem~1.6.6]{Law91}, $\lim_{N\to\infty} \theta_N\, v_N
=1$. 

We investigated in \cite{jlt1, jlt2} the asymptotic behavior of the
rescaled process 
\begin{equation}
\label{e30}
\bb X_N(t) \;=\; X_N(t\theta_N)\;.
\end{equation}
Actually, we examined this problem for a large class of random walks
evolving on random graphs with i.i.d. random weights $W_j$ in the basin
of attraction of an $\alpha$-stable distribution, $0<\alpha<1$.

That the convergence of the rescaled process $\bb X_N(t)$ can not
occur in the Skorohod topology is easy to understand. Consider the
two-dimensional case, for instance. In the time scale $\theta_N$, a
typical trajectory which starts from a site $j$ has the shape
illustrated in Figure \ref{efig1} with the difference that the number
of sites visited in time intervals $R_j$ is much greater than the one
depicted in Figure \ref{efig1}.

\begin{figure}[htb]
  \centering
  \def\svgwidth{300pt}
  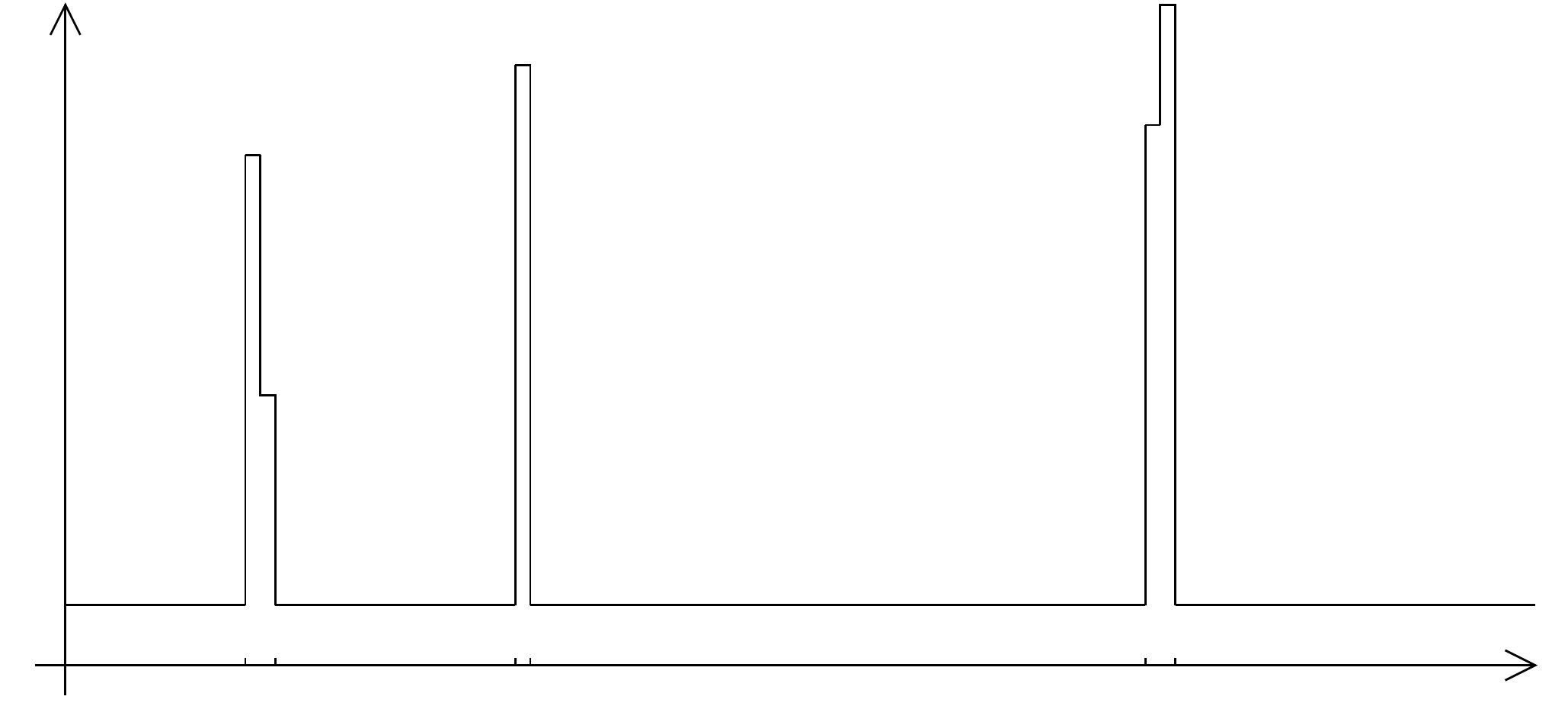
\caption{A typical trajectory of a random walk among random traps in
  $\bb T^d_N$.}
\label{efig1}
\end{figure}

The time intervals $I_1$, $I_2, \dots$ correspond to the holding times
at $x^N_j$ of the rescaled process $\eta^N(t \theta_N)$, while the time
intervals $R_1$, $R_2, \dots$ correspond to the excursions in the ball
$B(x^N_j, \ell_N)$ between two visits to $x^N_j$. In the time scale
$\theta_N$, the sojourn times at $x^N_j$ have length of order
$\theta_N^{-1}$, while we proved in \cite{jlt1} that the length of the
time intervals $R_j$ are of an order $\gamma_N$ which is much smaller
than $\theta^{-1}_N$, $\gamma_N\ll \theta^{-1}_N$. This property follows
from the fact that with a very large probability, excluding $x^N_j$,
all sites in the ball $B(x^N_j, \ell_N)$ are shallow traps, i.e.,
belong to the set $\{x^N_1, \dots, x^N_{M_N}\}^c$ for some sequence
$M_N$ which increases to $\infty$ fast enough.

The process $\eta^N(t \theta_N)$ reaches the boundary of the ball
$B(x^N_j, \ell_N)$ after $v_N^{-1} \sim \theta_N$ attempts.  Summing
the lengths of the time intervals $I_j$ up to the hitting time of the
set $B(x^N_j, \ell_N)^c$, we obtain a total length of order $1$.
Actually, the total length is a mean $\theta_N$ geometric sum of
i.i.d. mean $W_j \theta^{-1}_N$ exponential random variables and is
therefore distributed according to a mean $W_j$ exponential random
variable, which foretells a Markovian limit.  In contrast, the sum of
the lengths of the time intervals $R_j$ is negligible, being of order
$\gamma_N \theta_N\ll 1$.

As in the zero-range model, the short excursions in the ball $B(x^N_j,
\ell_N)$ prevent the convergence in the Skorohod topology of the
trajectory presented in Figure \ref{efig1} to the constant trajectory
equal to $j$.

\smallskip\noindent{\bf 3. Trace, last visit and soft topology.}  There
are three ways to overcome the topological obstruction caused by the short
excursions, illustrated in Figures \ref{efig2} and \ref{efig1} by the
time intervals $R_j$, $j\ge 1$. The first one, proposed in \cite{bl2,
  bl7}, consists in removing the time intervals $R_j$ by considering
the \emph{trace} of the process $\bb X_N(t)$ on the set $A_N$, where
$A_N$ represents the set $S_L=\{1, \dots, L\}$ in the zero-range
model, and the set of deep traps $\{1, \dots, M_N\}$ in the second
model.

Denote by $\bb X^{\rm T}_N(t)$ the trace of $\bb X_N(t)$ on the set
$A_N$. In the random walk model, since the sites in the ball $B(x^N_j,
\ell_N)$, with the exception of $x^N_j$, belong to $A_N^c$, the trace
on the set $A_N$ of the trajectory presented in Figure \ref{efig1} is
the constant trajectory equal to $j$. Therefore, by considering the
trace process $\bb X^{\rm T}_N(t)$, one removes the short excursions
among the shallow traps, and one is left with the problem of proving
that the trace process $\bb X^{\rm T}_N(t)$ converges in the Skorohod
topology to some Markov chain. The same ideas apply to the zero-range
model. In fact, this strategy has been adopted in \cite{bl3, l1} to
describe the asymptotic evolution of the condensate in sticky
zero-range dynamics.

A second way to overcome the difficulty created by the short time
intervals $R_j$ is to consider the process which records the last
visit to the set $A_N$. Denote by $\bb X^{\rm V}_N(t)$ the process
which at time $t$ is equal to last site in $A_N$ visited by the
process $\bb X_N(t)$ before time $t$. More precisely, let
\begin{equation*}
\sigma_N(t) \;:=\; \sup \big \{s\le t : \bb X_N(s) \in A_N \big\}\;, 
\end{equation*}
with the convention that $\sigma_N(t) = 0$ if the set $\{s\le t : \bb
X_N(s) \in A_N \}$ is empty. Assume that $\bb X_N(0)$ belongs to $A_N$
and define $\bb X^{\rm V}_N(t)$ by
\begin{equation*}
\bb X^{\rm V}_N(t) \;=\; 
\begin{cases}
\bb X_N (\sigma_N(t)) & \text{if $\;\bb X_N (\sigma_N(t))\in A_N$,} \\
\bb X_N (\sigma_N (t)-) & \text{if $\;\bb X_N(\sigma_N(t))\not\in A_N$.} 
\end{cases}
\end{equation*}

We refer to $\bb X^{\rm V}_N(t)$ as the \emph{last visit process}. In
Figure \ref{efig1}, assuming that all sites in $B(x^N_j, \ell_N)$,
except $x^N_j$, are shallow traps, the last visit trajectory is
constant equal to $j$, the short excursions among the shallow traps in
the time intervals $R_i$ being replaced by trajectories which are
constant equal to $j$. As for the trace process, after this
surgical intervention on the paths, one is left to prove that the
last visit process $\bb X^{\rm V}_N(t)$ converges in the Skorohod
topology to some Markovian dynamics.

Proposition 4.3 in \cite{bl2} asserts that if the trace on a set $A_N$
of an $\Omega_N$-valued chain $\bb X_N(t)$ converges in the Skorohod
topology to a Markov process $\bb X(t)$, and if the time spent by $\bb
X_N(t)$ on the complement $\Omega_N\setminus A_N$ is negligible, then
the process which records the last visit to the set $A_N$ also
converges in the Skorohod topology to the Markov process $\bb X(t)$.

The last visit process $\bb X^{\rm V}_N(t)$ has the advantage with
respect to the trace process that it does not translate in time the
original trajectory. More precisely, if $\bb X_N(t)$ belongs to $A_N$
then $\bb X^{\rm V}_N(t) = \bb X_N(t)$, while this may be false for
the trace process $\bb X^{\rm T}_N(t)$ because by removing short time
intervals, the value of the trace process at time $s$ corresponds to
the value of the original process at some later time $s'\ge s$: $\bb
X^{\rm T}_N(s) = \bb X_N(s')$ for some $s'\ge s$.

A third way to overcome the problem created by the short time
intervals $R_j$ is to define a topology, weaker than the Skorohod
topology, which disregards the behavior of the trajectory in short
time intervals. A first attempt in this direction has been made in
\cite{jlt2}, where we introduced a metric in the space of functions
$\mf x:[0,T]\to \bb R$ which induces the topology of the convergence
in measure. We proved in \cite{jlt2} that the rescaled process $\bb
X_N(t)$ introduced in \eqref{e30} converges in this metric to the
$K$-process.

In this article, we introduce another topology in which we can prove
the convergence to a Markov chain of the two models introduced above
and of all the other dynamics in which a metastable behavior has been
identified \cite{bl3, jlt1, bl4, jlt2, clmst, cmt, bl7, lt1}.

This topology has two advantages with respect to the the one
introduced in \cite{jlt2}. On the one hand, it is defined on the space
of paths which are \emph{soft} right-continuous and have soft
left-limits, a much smaller space than the one which appears in
\cite{jlt2}. Actually, this set of paths, denoted by $E([0,T], S)$ in
the next section, is precisely the set of trajectories which supports
the paths of Markov chains which have instantaneous states
\cite[Chapter II.7]{C1}, \cite[Section 9.2]{F1}. On the other hand,
this topology is a natural generalization of the Skorohod topology and
is connected to the Skorohod topology through the last visit process
(cf. Theorem \ref{se14}).

The main contributions of this article are the introduction of the
soft topology, defined by the metric $\mb d$ in \eqref{e06}, and two
results. The first one, Theorem \ref{se14}, establishes that a
sequence of probability measures $P_n$ defined on $E([0,T], S_{\mf
  d})$ converges in the soft topology to a probability measure $P$ if
and only if for every $m\ge 1$, the sequence of probability measures
$P_n\circ \mf R^{-1}_m$ defined on $D([0,T], S_m)$ converges in the
Skorohod topology to the probability measure $P\circ \mf
R^{-1}_m$. Here $S_{\mf d}$ stands for the one point compactification
of $\bb N$, $S_m$ for the set $\{1, \dots, m\}$, and $\mf R_m x$ for
the path which records the last site in $S_m$ visited by the
trajectory $x\in E([0,T], S_{\mf d})$. The second result, Theorem
\ref{se12}, presents sufficient conditions on a sequence of
probability measures $P_n$, defined on $E([0,T], S_{\mf d})$ and
converging to a probability measure $P$ in the soft topology, for the
limit $P$ to be concentrated on a subspace of $D([0,T], S_{\mf d})$,
the space of c\`adl\`ag trajectories.

\smallskip
\noindent{\bf 4. Scaling limit of metastable Markov chains.}
In view of these results, to prove that a sequence of Markov chains
converges in the soft topology to a Markov chain evolving in a
contracted state space, we may proceed as follows. We first introduce
a general framework.

Consider a sequence of countable state spaces $E_N$, $N\ge 1$, and a
sequence of $E_N$-valued continuous-time Markov chains
$\eta^N(t)$. Denote by $\mb P_\eta$, $\eta\in E_N$, the probability
measure on the path space $D(\bb R_+, E_N)$ induced by the Markov
chain $\eta^N(t)$ starting from $\eta$. Expectation with respect to
$\mb P_\eta$ is denoted by $\mb E_\eta$.

Let $\ms E^1_N, \dots, \ms E^L_N$, $L\ge 2$, be a finite number of
disjoint subsets of $E_N$: $\ms E^x_N\cap \ms E^y_N=\varnothing$,
$x\neq y$. The sets $\ms E^x_N$ have to be interpreted as wells of the
Markov chains $\eta^N(t)$.  Let $S_L = \{1, \dots, L\}$, $\ms
E_N=\cup_{x\in S_L}\ms E^x_N$ and $\Delta_N=E_N \setminus \ms E_N$ so
that
\begin{equation}
\label{e17}
\{\ms E^1_N \,,\ \dots \,,\, \ms E^{L}_N \,,\, \Delta_N\} \text{ forms
a partition of } E_N\;. 
\end{equation}
We assumed here that the number of wells, $L$, does not depend on $N$,
but the same analysis can be carried through if it depends on $N$, as in
the case of random walks among random traps.

Denote by $\eta^{\rm T}(t)$ the trace of the process $\eta^N(t)$ on
the set $\ms E_N$, and by $\eta^{\rm V} (t)= (\mf R_{\ms E}
\eta^N)(t)$ the process which records the last site visited by $\eta^N
(t)$ in the set $\ms E_N$, as defined in \eqref{e05}.  Denote by $\Psi
= \Psi_N:\ms E_N\mapsto S_L \cup \{N\}$, the projection given by
\begin{equation*}
\Psi(\eta) \;=\; \sum_{x=1}^L x \, 
\mathbf 1\{\eta \in \ms E^x_N\} \;+\; 
N \mathbf 1\{\eta \in \Delta_N\}\;.
\end{equation*}
We call $\Psi(\eta)$ the order parameter.  Let $\{X_N(t): t\ge 0\}$
(resp. $X^{\rm T}_N(t)$, $X^{\rm V}_N(t)$) be the stochastic process
on $S_L \cup\{N\}$ (resp. $S_L$) defined by $X_N(t)=\Psi(\eta^{N}(t))$
(resp. $X^{\rm T}_N(t)=\Psi(\eta^{\rm T}(t))$, $X^{\rm
  V}_N(t)=\Psi(\eta^{\rm V} (t))$). Besides trivial cases, $X_N(t)$ is
not Markovian.  Note that $X^{\rm T}_N(t)$ is the trace of $X_N(t)$ on
the set $S_L$ and that $ X^{\rm V}_N(t)$ is the process which records
the last site in $S_L$ visited by $X_N(t)$.

\begin{definition}
\label{se01}
Let $\nu_N$ be a sequence of probability measures on $\ms E_N$ such
that $\nu_N \circ \Psi^{-1}$ converges to a probability measure $\nu$
on $S_L$.  The sequence of Markov chains $\{\eta^N(t) : t\ge 0\}$ is
said to be a metastable sequence of Markov chains for the partition
\eqref{e17} and the initial state $\nu_N$ if there exist
\begin{enumerate}
\item[{\rm (a)}] An increasing sequence $\theta_N$, $\theta_N\gg 1$,
\item[{\rm (b)}] A $S_L$-valued Markov chain $\bb X(t)$ whose
  distribution we denote by $\bb P_x$, $x\in S_L$,
\end{enumerate}
such that the measure $\mb P_{\nu_N} \circ \bb X_N^{-1}$, $\bb X_N(t)
= X_N(t \theta_N) =\Psi(\eta^{N}(t\theta_N))$, converges in the soft
topology to $\bb P_\nu = \sum_{x\in S_L} \nu(x) \bb P_x$.
\end{definition}

To prove that a sequence of Markov chain is metastable one may proceed
as follows:

\smallskip\noindent{\it Step 1:} Prove the convergence in the Skorohod
topology of $\bb X^{\rm T}_N(t) = X^{\rm T}_N(\theta_N t)$ to
a Markov chain $\bb X(t)$.

\smallskip\noindent{\it Step 2:} Prove that the time spent by the
chain $\eta^N(t)$ on the set $\Delta_N$ in the time scale $\theta_N$
is negligible.

\smallskip\noindent{\it Step 3:} Apply Theorem \ref{se15} which
asserts that under the two previous conditions the process $\bb
X_N(t)$ converges in the soft topology to $\bb X(t)$. \smallskip

This article is organized as follows. For a metric space $\bb M$,
denote by $D([0,T], \bb M)$, $T>0$, the space of right-continuous
functions $x:[0,T]\to\bb M$ with left-limits. We introduce in
\eqref{e06} a metric $\mb d$ in a subspace of $D([0,T], S_{\mf d})$,
where $S_{\mf d}$ is the one-point compactification of $\bb N$. The
completion of this subspace with respect to the metric $\mb d$ turns
out to be the space of soft right-continuous trajectories with soft
left-limits.  This space, denoted by $E([0,T], S_{\mf d})$, is
introduced in Section \ref{esec5}. We derive in this section several
properties of the metric $\mb d$ and we prove in Proposition
\ref{se06} that the space $E([0,T], S_{\mf d})$ endowed with the
metric $\mb d$ is complete and separable. In Section \ref{esec2} we
introduce a subspace $E^*([0,T], S_{\mf d})$ of the space $E([0,T],
S_{\mf d})$ and we present in Lemma \ref{se11} sufficient conditions
for the limit trajectory $x$ in the soft topology of a sequence $x_n$
in $E([0,T], S_{\mf d})$ to belong to $E^*([0,T], S_{\mf d})$. In
Section \ref{esec3} we state and prove the main results of the
article, mentioned above, and in Section \ref{esec4} we present some
applications of these results. We prove the convergence of the order
parameter to a Markov chain in the case of the random walk among
random traps presented above and in the case of the condensate in
sticky zero-range dynamics.


\section{The space $E([0,T], S_{\mf d})$}
\label{esec5}


Let $S_m = \{1, \dots, m\}$, $m\ge 1$, and let $S_{\mf d}$ be the
one-point compactification of $S = \bb N$: $S_{\mf d} = S \cup\{\mf
d\}$, $\mf d= \infty$, where the metric in $S_{\mf d}$ is given by
$d(k,j)=|k^{-1} - j^{-1}|$. 


Fix $T>0$. Denote by $\Lambda$ the set of increasing and continuous
functions $\lambda:[0,T]\to [0,T]$ such that $\lambda(0)=0$, $\lambda
(T)=T$.  For $\lambda\in \Lambda$, let
\begin{equation*}
\Vert\lambda\Vert^o \;=\; \sup_{0\le s<t\le T} 
\Big | \, \log \frac{\lambda (t) - \lambda (s)}{t-s} \, \Big | \;.
\end{equation*}
Denote by $d_S$ the Skorohod metric on $D([0,T], S_m)$, $m\ge 1$,
defined by
\begin{equation*}
d_S (x,y) \;=\; \inf_{\lambda\in \Lambda} 
\max \Big\{  \Vert x - y \lambda \Vert_\infty \,,\,
\Vert\lambda\Vert^o \Big\}\;,
\end{equation*}
where $y\lambda = y\circ \lambda$ and $\Vert x - y \lambda
\Vert_\infty = \sup_{0\le t\le T} d ( x (t), y \lambda (t))$.

\begin{definition}
\label{se02}
A measurable function $x: [0,T] \to S_{\mf d}$ is said to have a
\emph{soft} left-limit at $t\in (0,T]$ if one of the following two
alternatives holds
\begin{enumerate}

\item[{\rm (a)}] The trajectory $x$ has a left-limit at $t$, denoted
  by $x(t-)$; 

\item[{\rm (b)}] The set of cluster points of $x(s)$, $s\uparrow t$,
  is a pair formed by $\mf d$ and a point in $S$, denoted by $x(t\,
  \ominus)$. 

\end{enumerate}

\noindent
A \emph{soft} right-limit at $t\in [0,T)$ is defined analogously. In
this case, the right-limit, when it exists, is denoted by $x(t+)$, and
the cluster point of the sequence $x(s)$, $s\downarrow t$, which
belongs to $S$ when the second alternative is in force is denoted by
$x(t\, \oplus)$.
\end{definition}


More concisely, a trajectory $x$ has a soft left-limit at $t\in (0,T]$
if and only if there exists $n\in S$ such that for all $m\ge 1$, there
exists $\delta>0$ for which $x(s)\in \{n\}\cup S^c_{m}$ for all
$t-\delta<s<t$. Note the similitude of this definition with the notion
of quasiconvergence in \cite{F1}.

The second alternative in Definition \ref{se02} asserts that there
exist $n\in S$ and two increasing sequences $t_j$, $t'_j \uparrow t$
such that $\lim_j x(t_j) = n$, $\lim_j x(t'_j) = \mf d$. Moreover, if
$x(t''_j)$ converges for some sequence $t''_j \uparrow t$, $\lim_j
x(t''_j) \in \{n, \mf d\}$.

We call $x(t\, \ominus)$ the \emph{finite} soft left-limit of $x$ at
$t$.  Whenever we refer to $x(t-)$ it means that $x$ has a left-limit
at $t$. Similarly, when we refer to $x(t\, \ominus)$, it is understood
that $x$ has not a left-limit at $t$, but that the alternative (b) of
the previous definition is in force. An analogous convention is
adopted for $x(t+)$ and $x(t\,\oplus)$.

\begin{remark} 
\label{se03}
Since $S_{\mf d}$ is a compact set, to prove that $x$ has a soft
right-limit at $t$ we only have to show uniqueness of limit points in
$S$ (assuming they exist). In other words, we have to prove that if
$t_j$ and $t'_j$ are sequences decreasing to $t$ and if $x(t_j)$,
$x(t'_j)$ converge to $m\in S$, $n\in S$, respectively, then $m=n$.
\end{remark}

\begin{definition} 
\label{se04}
A trajectory $x:[0,T] \to S_{\mf d}$ which has a soft right-limit at
$t$ is said to be soft right-continuous at $t$ if one of the following
three alternatives holds

\begin{enumerate}
\item[{\rm (a)}] $x(t+)$ exists and is equal to $\mf d$;
\item[{\rm (b)}] $x(t+)$ exists, belongs to $S$, and $x(t+) = x(t)$;
\item[{\rm (c)}] $x(t\oplus)$ exists and $x(t\oplus) = x(t)$.
\end{enumerate}

\noindent A trajectory $x: [0,T] \to S_{\mf d}$ which is soft
right-continuous at every point $t\in [0,T]$ is said to be soft
right-continuous.
\end{definition}

A trajectory $x$ is soft right-continuous at $t$ if and only if there
exists $n\in S$ such that for all $m\ge 1$, there exists $\delta>0$
for which $x(s)\in \{n\}\cup S^c_{m}$ for all $t \le s<t+\delta$.

Note that if $x$ is soft right-continuous at $t$ and if $x(t+)=\mf d$,
then $x(t)$ may be different from $x(t+)$. In contrast, if $x$ is soft
right-continuous at $t$ and if $x(t)=\mf d$, then $x(t+)$ exists and
$x(t+) = \mf d = x(t)$.

Clearly, if $x$ is soft right-continuous at $t$, for every $m\ge 1$,
there exists $\epsilon>0$ such that for all $t\le s<t+\epsilon$,
\begin{equation}
\label{e08}
x(s) = x(t) \text{ or } x(s)\ge m\;.
\end{equation}
Similarly, if $x$ has a soft left-limit at $t$, there exists $n\in S$
with the following property.  For every $m\ge 1$, there exists
$\epsilon>0$ such that for all $t-\epsilon< s<t$,
\begin{equation}
\label{e09}
x(s)= n \text{ or } x(s)\ge m\;.
\end{equation}

\begin{definition}
Let $\bb E([0,T], S_{\mf d})$ be the space of soft right-continuous
trajectories $x: [0,T] \to S_{\mf d}$ with soft left-limits.
\end{definition}
 
Note that the space $\bb E([0,T], S_{\mf d})$ corresponds to the space
of sample functions of Markov chains with instantaneous states
\cite[Chapter II.7]{C1}, \cite[Section 9.2]{F1}.


Fix a trajectory $x$ in $\bb E([0,T], S_{\mf d})$ such that $x(r)=\mf d$
for some $r\in [0,T)$. Since $x$ is soft right-continuous, by
Definition \ref{se04}, 
\begin{equation}
\label{e15}
\text{$x(r+)$ exists and $x(r+)=\mf d$.}
\end{equation}

Recall that $S_m = \{1, \dots, m\}$, $m\ge 1$. For a trajectory $x$ in
$\bb E([0,T], S_{\mf d})$, $t\in [0,T]$, let
\begin{equation}
\label{e07}
\sigma^x_m(t) \;:=\; \sup\{s\le t : x(s) \in S_m\}\;.
\end{equation}
If the set $\{s\le t : x(s) \in S_m\}$ is empty, we set $\sigma^x_m(t)
= 0$, but this convention does not play any role below and we could
have defined $\sigma^x_m(t)$ in another way. When there is no
ambiguity and it is clear to which trajectory we refer to, we denote
$\sigma^x_m(t)$ by $\sigma_m(t)$.


\begin{figure}[htb]
  \centering
  \def\svgwidth{350pt}
  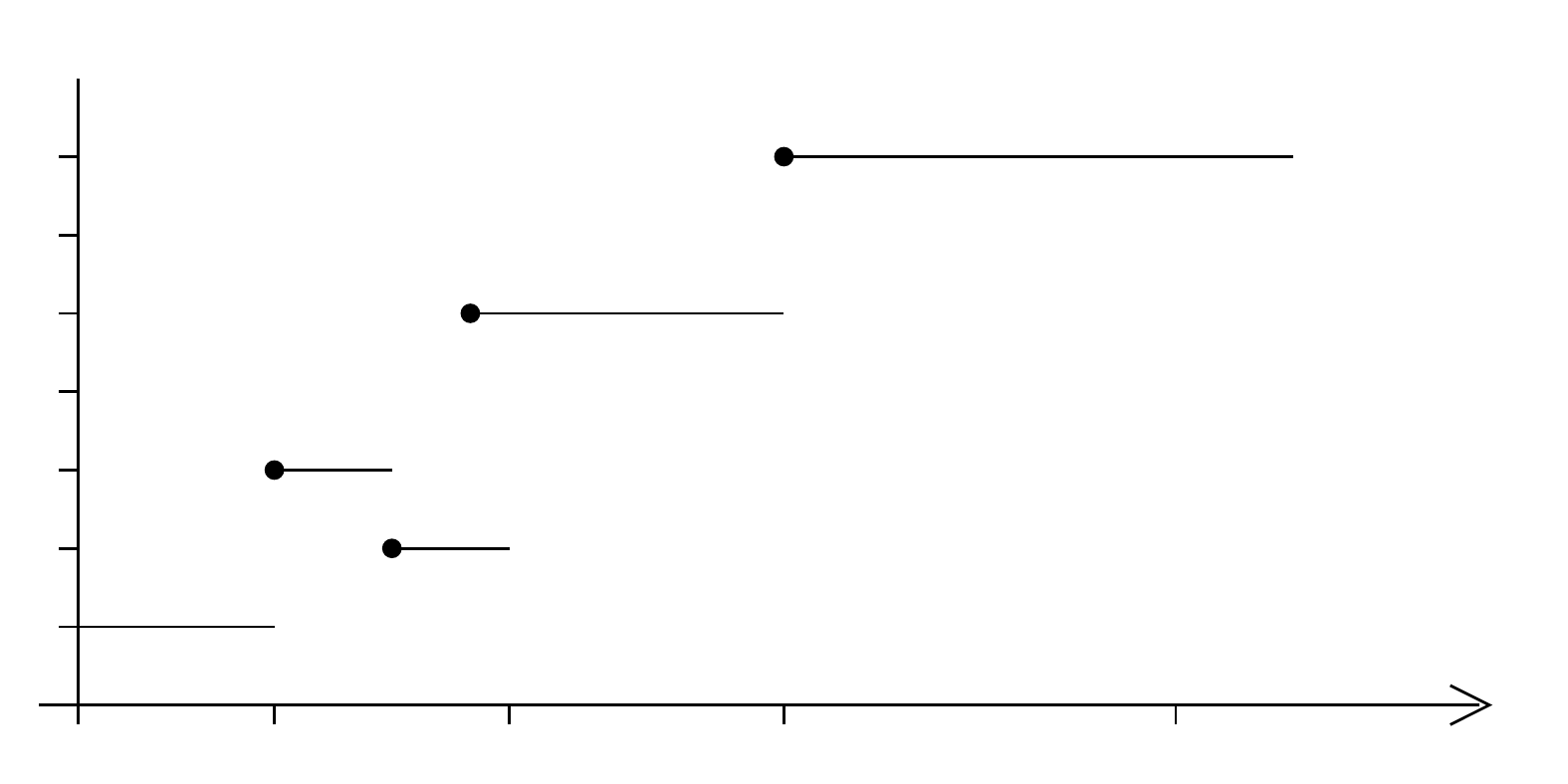
  \caption{The values of $\sigma_j(t)$ and $(\mf R_j x)(t)$ for a
    trajectory $x:[0,T] \to \bb N$. In this example $\sigma_2(t) =
    \sigma_3(t) = \sigma_4(t)$, $\sigma_5(t) = \sigma_6(t)$, and $(\mf
    R_2 x)(t) = (\mf R_3 x)(t) = (\mf R_4 x)(t) = 2$, $(\mf R_5 x)(t)
    = (\mf R_6 x)(t) = 5$.  }
\label{efig4}
\end{figure}

Fix $t\in (0,T]$ and $m\ge 1$. Suppose that $\sigma_m(t)>0$ and that
$x(\sigma_m(t))\not\in S_m$, so that $x(s)\not \in S_m$ for
$\sigma_m(t) \le s\le t$. By \eqref{e09}, there exist $n\in S$ and
$\epsilon>0$ such that for each $s\in (\sigma_m(t) - \epsilon ,
\sigma_m(t))$ either $x(s)=n$ or $x(s)>m$. By definition of
$\sigma_m(t)$ we must have $n\in S_m$. Moreover, $x(\sigma_m(t)-)=n$
if $x$ has a left-limit at $\sigma_m(t)$, and
$x(\sigma_m(t)\ominus)=n$ if not.


Let ${\mf R}_m x$ be the trajectory which records the last site
visited in $S_m$: $({\mf R}_m x)(t) = 1$ if $x(s)\not \in S_m$
for $0\le s\le t$, and
\begin{equation}
\label{e05}
({\mf R}_m x)(t) \;=\; 
\begin{cases}
x(\sigma_m(t)) & \text{if $x(\sigma_m(t))\in S_m$,}\\
x(\sigma_m(t)-) & \text{if $x(\sigma_m(t))\not \in S_m$ and 
$x(\sigma_m(t)-)$ exists,} \\
x(\sigma_m(t)\ominus) & \text{otherwise},
\end{cases}
\end{equation}
if there exists $0\le s\le t$ such that $x(s)\in S_m$.  Figure
\ref{efig4} illustrates the definition of $\sigma_m(t)$ and ${\mf R}_m
x$ for some trajectory $x$.

Note that $({\mf R}_m x)(0) = x(0)$ if $x(0)\in S_m$ and $({\mf R}_m
x)(0) = 1$ if $x(0)\not\in S_m$. The convention that $({\mf R}_m x)(t)
= 1$ if $x(s)\not \in S_m$ for $0\le s\le t$ corresponds to the
assumption that the trajectory $x$ is defined for $t<0$ and that
$x(t)=1$ in this time interval.

Consider a trajectory $x$ in $D([0,T],S_{\mf d})$, $m\ge 1$ and $t\in
(0,T]$. Assume that $x(t)\not\in S_m$ and that there exists $0\le s\le
t$ such that $x(s)\in S_m$. Since $x$ is right-continuous,
$\sigma_m(t)>0$ and $x(\sigma_m(t)) = x(\sigma_m(t)+) \not \in
S_m$. Hence, since $x$ has left-limits, under the above conditions,
\begin{equation}
\label{e23}
({\mf R}_m x)(t) \;=\; x(\sigma_m(t)-)\;. 
\end{equation}
Note that we may have $\sigma_m(t) = t$ in this example.

\begin{asser}
\label{ea01}
Fix a trajectory $x$ in $\bb E([0,T], S_{\mf d})$.  For each $m\ge 1$,
${\mf R}_m x$ is a trajectory in $D([0,T], S_m)$.
\end{asser}

\begin{proof}
Fix $m\ge 1$.  We first prove the right continuity of ${\mf R}_m
x$. Fix $t\in [0,T)$.  By \eqref{e08}, there exists $\delta>0$ such
that for all $t\le s\le t+\delta$, either $x(s) = x(t)$ or $x(s)
>m$. Suppose that $x(t)$ belongs to $S_m$. In this case, $({\mf R}_m
x)(s) = x(t) = ({\mf R}_m x)(t)$ for $t\le s < t+\delta$. On the other
hand, if $x(t) \not \in S_m$, $x(s)\not\in S_m$ for $t\le s<t+\delta$
so that $\sigma_m(s) = \sigma_m(t)$ in this interval. Therefore, in
view of \eqref{e05}, $({\mf R}_m x)(s) = ({\mf R}_m x)(t)$ for $t\le
s\le t+\delta$.  This proves that ${\mf R}_m x$ is right-continuous.

We turn to the proof of the existence of a left limit at $t\in
(0,T]$. If $x(t-)$ exists and belongs to $S_m$, $({\mf R}_m x)(s) =
x(t-)$ for all $s<t$ close enough to $t$. If $x(t-)$ exists and does
not belong to $S_m$, $\sigma_m(s)$ is constant in an open interval
$(t-\delta, t)$, which implies that $({\mf R}_m x)(s)$ is constant in
the same interval. Finally, suppose that $x(t\,\ominus)$ exists. In
view of \eqref{e09}, there exists $\delta>0$ such that for all
$t-\delta<s< t$, either $x(s)>m$ or $x(s)=x(t\,\ominus)$. If
$x(t\,\ominus)\le m$, $({\mf R}_m x)(s) = x(t\,\ominus)$ in some
interval $(t-\delta',t)$, $\delta'>0$. If $x(t\,\ominus)>m$, then
$\sigma_m(s)$ is constant in the interval $(t-\delta, t)$, so that
${\mf R}_m x$ is constant in the same interval. This concludes the
proof of the assertion. 
\end{proof}

The next example shows that the trajectories ${\mf R}_m x$, $m\ge 1$,
do not characterize the trajectory $x$.

\begin{example}
\label{se08}
Fix $0<s<t<T$ and a sequence $\{t_j : j\ge 1\}$ such that $t_1<T$,
$t_j\downarrow t$. Consider the trajectories $x$, $y\in
\bb E(([0,T], S_{\mf d})$ given by
\begin{equation*}
\begin{split}
& x \;=\; \mb 1\{[0,s)\} \;+\; \mf d \, \mb 1\{[s,t]\} \;+\;
\sum_{j\ge 2} j\, \mb 1\{[t_j,t_{j-1})\} \;+\;
\mb 1\{[t_1,T]\}\;,\\
& \quad y \;=\; \mb 1\{[0,t]\} \;+\;
\sum_{j\ge 2} j\, \mb 1\{[t_j,t_{j-1})\} \;+\; \mb 1\{[t_1,T]\}\;.
\end{split}
\end{equation*}
It is clear that ${\mf R}_m x = {\mf R}_m y$ for all $m\ge 1$.
\end{example}


For a trajectory $x\in \bb E([0,T], S_{\mf d})$, let
$\sigma^x_\infty(t)$ be the time of the last visit to $S$:
\begin{equation*}
\sigma^x_\infty(t) \;:=\; \sup\{s\le t : x(s) \in S\}\;,
\end{equation*}
with the convention that $\sigma^x_\infty(t)=0$ if $x(s)=\mf d$ for
$0\le s\le t$. As before, when there is no ambiguity and it is clear
to which trajectory we refer to, we denote $\sigma^x_\infty(t)$ by
$\sigma_\infty(t)$.


Let ${\mf R}_\infty x$ be the trajectory which records the last site
visited in $S$: $({\mf R}_\infty x)(t) = 1$ if $x(s) = \mf d$ for all
$0\le s\le t$, and
\begin{equation*}
({\mf R}_\infty x)(t) \;=\; 
\begin{cases}
x(\sigma_\infty(t)) & \text{if $x(\sigma_\infty(t))\in S$,} \\
x(\sigma_\infty(t)-) & \text{if $x(\sigma_\infty(t))\not\in S$ 
and if $x(\sigma_\infty(t)-)$ exists,} \\
x(\sigma_\infty(t)\ominus) & \text{otherwise},
\end{cases}
\end{equation*}
if there exists $0\le s \le t$ such that $x(s)\in S$. As for the
operator $\mf R_m$, the convention that $({\mf R}_\infty x)(0) = 1$ if
$x(0) = \mf d$ corresponds in assuming that the trajectory is defined
for $t<0$ and that $x(t)=1$ for $t<0$.  Note that $(\mf R_\infty x)(0)
\in S$ and that $(\mf R_\infty x)(0)= x(0)$ if and only if $x(0)\in
S$. Note also that in Example \ref{se08} $y=\mf R_\infty x$ and that
$y$ is not right-continuous at $t$ but soft right-continuous.

Consider a trajectory $x$ in $D([0,T],S_{\mf d})$ and $t\in
(0,T]$. Assume that $x(t)\not\in S$ and that there exists $0\le s\le
t$ such that $x(s)\in S$. Since $x$ is right-continuous,
$\sigma_\infty(t) >0$ and $x(\sigma_\infty(t)) = x(\sigma_\infty (t)+)
\not \in S$. Hence, since $x$ has left-limits, under the above
conditions,
\begin{equation}
\label{e24}
({\mf R}_\infty x)(t) \;=\; x(\sigma_\infty (t)-)\;. 
\end{equation}


\begin{definition}
Denote by $E([0,T], S_{\mf d})$ the set of trajectories in $\bb
E([0,T], S_{\mf d})$ such that $x(0)\in S$ and which fulfill the
following condition.  If $x(t)=\mf d$ for some $t\in (0,T]$, then
$\sigma_\infty (t)>0$ and $x(\sigma_\infty(t)) =
x(\sigma_\infty(t)-)=\mf d$.
\end{definition}

In words, a trajectory $x$ belongs to $E([0,T], S_{\mf d})$ if it
possesses the following property. If $x(t)=\mf d$ for some $t\in
(0,T]$, then $x$ has visited $S$ before time $t$, $\sigma_\infty
(t)>0$, and at the time of the last visit to $S$ before $t$, that is,
at time $\sigma_\infty(t)$, $x$ is equal to $\mf d$ and its left-limit
is also equal to $\mf d$: $x(\sigma_\infty(t)) =
x(\sigma_\infty(t)-)=\mf d$.  Note that the trajectory $x$ of Example
\ref{se08} does not belong to $E([0,T], S_{\mf d})$ because
$x(\sigma_\infty(t)-)=1$.

\begin{lemma} 
\label{se10}
For every $x\in \bb E([0,T], S_{\mf d})$, the trajectory ${\mf
  R}_\infty x$ belongs to the space $E([0,T], S_{\mf d})$.
\end{lemma}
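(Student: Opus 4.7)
The plan is to verify that $y := \mf R_\infty x$ satisfies the three defining properties of $E([0,T], S_{\mf d})$: that $y$ is soft right-continuous, has soft left-limits (so $y \in \bb E([0,T], S_{\mf d})$), and satisfies the structural condition at every time $t$ with $y(t) = \mf d$. The preliminary observation is that the three branches of \eqref{e05} show $y(t) = \mf d$ iff $\sigma^x_\infty(t)$ is not attained (so $x(\sigma^x_\infty(t)) = \mf d$) and $x(\sigma^x_\infty(t)-)$ exists and equals $\mf d$; otherwise $y(t) \in S$. In particular $y(0) \in S$ is immediate from the definition of $\mf R_\infty$.

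For soft right-continuity at $t \in [0,T)$, I split on whether $x(t) \in S$ or $x(t) = \mf d$. If $x(t) \in S$, then $y(t) = x(t) =: k$, and the soft right-continuity of $x$ provides, for each $m$, a $\delta > 0$ with $x(s) \in \{k\} \cup S^c_m$ on $[t,t+\delta)$; running the three branches of \eqref{e05} through this, using that $\{k\} \cup S^c_m$ is closed in $S_{\mf d}$, yields $y(s) \in \{k\} \cup S^c_m$ on the same interval. If $x(t) = \mf d$, then \eqref{e15} gives $x(s) \in S^c_m$ on $[t,t+\delta)$, and decomposing $\sigma^x_\infty(s) = \max(\sigma^x_\infty(t), \sup\{s' \in (t,s] : x(s') \in S\})$, the same three-branch analysis yields $y(s) \in \{y(t)\} \cup S^c_m$; when $y(t) = \mf d$ this reads $y(s) \in S^c_m$, i.e.\ $y(t+) = \mf d$ in the sense of alternative (a) of Definition \ref{se04}.

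For soft left-limits at $t \in (0,T]$, the key object is the monotone limit $\rho := \lim_{s \uparrow t} \sigma^x_\infty(s)$. If $\rho < t$, then $x \equiv \mf d$ on $(\rho, t)$, so $\sigma^x_\infty$ is constant there and hence $y$ is constant on $(\rho, t)$, giving $y(t-)$ directly. If $\rho = t$, let $n_x \in S$ be the distinguished value of the soft left-limit of $x$ at $t$; for each $m$ choose $\delta_m > 0$ with $x(s') \in \{n_x\} \cup S^c_m$ on $(t - \delta_m, t)$ and then $\epsilon_m > 0$ with $\sigma^x_\infty(s) \in (t - \delta_m, t)$ for $s \in (t - \epsilon_m, t)$. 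The same three-branch analysis of \eqref{e05} produces $y(s) \in \{n_x\} \cup S^c_m$ on $(t - \epsilon_m, t)$, so the cluster points of $y(s)$ as $s \uparrow t$ lie in $\bigcap_m (\{n_x\} \cup S^c_m) = \{n_x, \mf d\}$, matching one of the alternatives in Definition \ref{se02}.

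Finally, for the structural condition, suppose $y(t) = \mf d$. Setting $r := \sigma^x_\infty(t)$, the preliminary observation gives $r > 0$, $x(r) = x(r-) = \mf d$, and a sequence $s_n \uparrow r$ with $x(s_n) \in S$. For every $s \in [r,t]$, $\sigma^x_\infty(s) = r$ with the sup not attained, so the second branch of \eqref{e05} gives $y(s) = x(r-) = \mf d$; meanwhile $y(s_n) = x(s_n) \in S$ for each $n$. Hence $\sigma^y_\infty(t) = r > 0$ and $y(r) = \mf d$, while $y(r-) = \mf d$ follows by redoing the $\rho = r$ case of the preceding step at the point $r$, where now $x(r-) = \mf d$ yields $y(s) \in S^c_m$ on $(r - \epsilon_m, r)$ for every $m$. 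The main obstacle is the book-keeping required to push the three branches of \eqref{e05} — particularly the one involving the finite soft left-limit $x(\cdot\,\ominus) \in S$ — through the closed set $\{n_x\} \cup S^c_m$, which ultimately boils down to the observation that such a cluster point must itself be equal to $n_x$ or lie in $S^c_m$.
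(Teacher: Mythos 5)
Your proof is correct and follows essentially the same route as the paper: a direct case-by-case verification that $\mf R_\infty x$ is soft right-continuous, has soft left-limits, and satisfies the extra condition at times where it equals $\mf d$, all by tracking $\sigma^x_\infty$ and the branches of the definition of $\mf R_\infty$. Your organization of the left-limit step around $\rho=\lim_{s\uparrow t}\sigma^x_\infty(s)$ and the closedness of $\{n\}\cup S_m^c$ is a slightly more uniform packaging of the same dichotomies the paper treats separately ($x(t-)=\mf d$, $x(t-)\in S$, $x(t\ominus)$ exists), but the substance is identical.
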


\begin{proof}
Fix a trajectory $x$ in $\bb E([0,T], S_{\mf d})$. By definition $(\mf
R_\infty x)(0) \in S$. We first show that
${\mf R}_\infty x$ belongs to $\bb E([0,T], S_{\mf d})$. 

We claim that ${\mf R}_\infty x$ has a left-limit at $t\in(0,T]$ if
$x$ has one. Suppose first that $x(t-)=\mf d$. If there exists
$\delta>0$ such that $x(s)=\mf d$ for $s\in (t-\delta, t)$, then
$\sigma_\infty$ is constant in this interval. By definition, ${\mf
  R}_\infty x$ is constant in the same interval and has therefore a
left-limit at $t$. On the other hand, if there exists a sequence
$t_j\uparrow t$ such that $x(t_j)\in S$, $\sigma_\infty (s) \ge t_1$
for $t_1\le s <t$. As $x(t-)=\mf d$, for every $m\ge 1$, there exists
$\delta>0$ such that $x(s)\ge m$ for $t-\delta\le s<t$. Therefore
$({\mf R}_\infty x)(s) \ge m$ for $t^*_\delta\le s <t$, where
$t^*_\delta$ is the smallest element of the sequence $t_j$ which is
greater than $t-\delta$.  This proves that $({\mf R}_\infty x)(t-)$
exists and is equal to $\mf d$. Suppose now that $x(t-)\in S$. In this
case $x(s)=x(t-)\in S$ for $s$ in some interval $(t-\delta, t)$. In
particular, $({\mf R}_\infty x) (s)= x(s) = x(t-)$ in the same
interval, which proves the claim. The trajectory $x$ of Example
\ref{se08} shows that the left-limits of $x$ and ${\mf R}_\infty x$ at
some point $t$ may be different.

Suppose now that $x(t\ominus)$ exists and is equal to $n\in S$.  By
definition there exists a sequence $t_j\uparrow t$ such that $x(t_j)
\to n$, which means that $x(t_j) = n$ for $j$ sufficiently large.  By
definition, $({\mf R}_\infty x) (t_j) = n$ for the same indices. Fix
$m>n$. By \eqref{e09}, there exists $\delta>0$ such that $x(s) =n$ or
$x(s)\ge m$ for all $t-\delta<s<t$. Hence, if we denote again by
$t^*_\delta$ the smallest element of the sequence $t_j$ which is
greater than $t-\delta$, for $t^*_\delta < s< t$, $({\mf R}_\infty x)
(s) = n$ or $({\mf R}_\infty x) (s) \ge m$. This proves that ${\mf
  R}_\infty x$ has a soft left-limit at $t$.

The trajectory ${\mf R}_\infty x$ is soft right-continuous. Fix $t\in
[0,T)$ and assume that $x(t) = \mf d$. If $x(s)=\mf d$ in some
interval $(t,t+\epsilon)$, $\sigma_\infty$ and ${\mf R}_\infty x$ are
constant on the interval $[t,t+\epsilon)$; while if there exists a
sequence $t_j\downarrow t$ such that $x(t_j)\in S$ for all $j$, $({\mf
  R}_\infty x)(t+) = \mf d$. In both cases, ${\mf R}_\infty x$ is soft
right-continuous at $t$.

Suppose now that $x(t)$ belongs to $S$ so that $({\mf R}_\infty
x)(t)=x(t)\in S$. By soft right-continuity of $x$ at $t$, for a fixed
$m\ge 1$, there exists $\delta>0$ such that $x(s) \in \{x(t)\}\cup
S^c_m$ for all $t\le s<t+\delta$. By definition of ${\mf R}_\infty x$
the same property holds for ${\mf R}_\infty x$, which proves its soft
right-continuity. 

We conclude the proof of the lemma showing that ${\mf R}_\infty x$
belongs to $E([0,T], S_{\mf d})$. Fix $t\in (0,T]$ such that $({\mf
  R}_\infty x)(t)=\mf d$. Denote by $\sigma_\infty(t)$, $\hat
\sigma_\infty(t)$ the time of the last visit to $S$ before time $t$ of
the trajectory $x$, ${\mf R}_\infty x$, respectively. Clearly
$x(t)=\mf d$, otherwise $({\mf R}_\infty x)(t) = x(t)\in S$. We also
have that $\sigma_\infty(t)>0$ because if $\sigma_\infty(t)=0$, $({\mf
  R}_\infty x)(t)$ would belong to $S$: $({\mf R}_\infty x)(t) =1$ by
definition if $x(s) = \mf d$ for $0\le s\le t$, and $({\mf R}_\infty
x)(t) = x(0)\in S$ if $x(s) = \mf d$ for $0< s\le t$. It follows from
the definition of ${\mf R}_\infty x$ and from the identity $({\mf
  R}_\infty x)(t)=\mf d$ that $x(\sigma_\infty(t)) =
x(\sigma_\infty(t)-) =\mf d$.

Since $x(s) = \mf d$ for $\sigma_\infty(t)<s\le t$, and since
$x(\sigma_\infty(t)) = x(\sigma_\infty(t)-) =\mf d$, we have that
$({\mf R}_\infty x)(s)=\mf d$, $\sigma_\infty(t)\le s\le t$, $\hat
\sigma_\infty(t) = \sigma_\infty(t)>0$, $({\mf R}_\infty
x)(\hat\sigma_\infty(t)-)=\mf d= ({\mf R}_\infty
x)(\hat\sigma_\infty(t))$.
\end{proof}

\begin{asser}
\label{ea10}
Let $x$ be a trajectory in $E([0,T], S_{\mf d})$. Then, ${\mf
  R}_\infty x = x$.
\end{asser}

The proof of this assertion is elementary. It follows from this claim
and from Lemma \ref{se10} that ${\mf R}_\infty : \bb E([0,T], S_{\mf
  d}) \to E([0,T], S_{\mf d})$ is a projection.  The next assertion
shows that ${\mf R}_m x$ converges pointwisely to $x$ if $x$ belongs
to $E([0,T], S_{\mf d})$.

\begin{asser}
\label{ea02}
Fix a trajectory $x$ in $\bb E([0,T], S_{\mf d})$. Then, ${\mf R}_m x$
converges pointwisely and $\lim_m {\mf R}_m x = {\mf R}_\infty x$.  
\end{asser}

\begin{proof} 
It is clear from the definition of ${\mf R}_m x$ that ${\mf R}_m x \le
{\mf R}_{m+1} x$. In particular, the pointwise limit always
exists. Fix $0\le t\le T$ and suppose initially that $x(t)\in S$.
In this case, for $m>x(t)$, $({\mf R}_m x)(t) = ({\mf R}_\infty x)
(t)$. 

Suppose from now on that $x(t) = \mf d$. If $x(s)=\mf d$ for $0\le
s\le t$, $({\mf R}_m x)(t) = 1 = ({\mf R}_\infty x) (t)$ for all $m\ge
1$, while if $x(0)\in S$ and $x(s)=\mf d$ for $0< s\le t$, $({\mf R}_m
x)(t) = x(0) = ({\mf R}_\infty x) (t)$ for all $m\ge x(0)$. We may
therefore assume that there exists $0<s<t$ such that $x(s)\in S$ so
that $\sigma_\infty(t) = \sigma^x_\infty(t)>0$.

If $x(\sigma_\infty(t)) \in S$, for $m>x(\sigma_\infty(t))$ we have
that $({\mf R}_m x)(t) = ({\mf R}_\infty x) (t)$, while if
$x(\sigma_\infty(t)) = \mf d$ and if $x(\sigma_\infty(t)-)$ exists,
$({\mf R}_\infty x) (t) = x(\sigma_\infty(t)-) = \lim_m ({\mf R}_m
x)(t)$. Finally, suppose that $x(\sigma_\infty(t)) = \mf d$ and that
$x(\sigma_\infty(t)-)$ does not exist. Then, by definition, $({\mf
  R}_\infty x) (t) = x(\sigma_\infty(t)\ominus)$ and for
$m>x(\sigma_\infty(t)\ominus)$ $({\mf R}_m x)(t) =
x(\sigma_\infty(t)\ominus)$. This proves the assertion.  
\end{proof}

The next statement follows from Assertions \ref{ea10} and \ref{ea02}.

\begin{asser}
\label{ea03}
Fix two trajectories $x$, $y$ in $E([0,T], S_{\mf d})$. If ${\mf R}_m
x = {\mf R}_m y$ for all $m$ large enough, then $x=y$.
\end{asser}


\smallskip
For two trajectories $x$, $y\in \bb E([0,T], S_{\mf d})$, let
\begin{equation}
\label{e06}
\begin{split}
\mb d (x,y) \;=\; \sum_{m\ge 1} \frac 1{2^m} \,
d_m (x, y)  \;,  \text{ where }  d_m (x, y) \;=\; 
d_S ({\mf R}_m x, {\mf R}_m y) \;.
\end{split}
\end{equation}

Example \ref{se08} shows that $\mb d$ is not a metric in $\bb E([0,T],
S_{\mf d})$, but the next assertion states that it is a metric in
$E([0,T], S_{\mf d})$.

\begin{asser}
\label{ea04}
The map $\mb d$ is a metric in $E([0,T], S_{\mf d})$.
\end{asser}

\begin{proof}
It is clear that $\mb d$ is finite, non-negative and symmetric, and
that $\mb d$ satisfies the triangular inequality. Suppose that $\mb
d(x,y)=0$. Then, ${\mf R}_m x = {\mf R}_m y$ for all $m\ge 1$, and,
by Assertion \ref{ea03}, $x=y$. 
\end{proof}

\begin{example}
\label{se07}
Fix $t_0<T$ and let $x_n \in D([0,T], S_{\mf d})$ be the sequence
given by
\begin{equation*}
x_n \;=\; \mb 1\{[0,t_0)\} \;+\; n \, \mb  1\{[t_0, t_0 + n^{-1})\} 
\;+\; \mb 1\{[t_0 + n^{-1}, T]\}\;.
\end{equation*}
While this sequence does not converges in the Skorohod topology, it
converges to the constant trajectory equal to $1$ in the metric $\mb
d$. In contrast and as required, for $\ell\in \bb N$, $\ell\not =1$,
the sequence
\begin{equation*}
y_n \;=\; \mb 1\{[0,t_0)\} \;+\; \ell \, \mb  1\{[t_0, t_0 + n^{-1})\} 
\;+\; \mb 1\{[t_0 + n^{-1}, T]\}
\end{equation*}
does not converge in the metric $\mb d$. 



The undesirable aspect of the metric $\mb d$ is that the sequence 
\begin{equation*}
z_n \;=\; \mb 1\{[0,t_0)\} \;+\; n \, \mb 1\{[t_0, T]\} 
\end{equation*}
also converges to the constant trajectory equal to $1$. To exclude
such cases, we shall introduce in the next section a subset of
trajectories in $E([0,T], S_{\mf d})$ which spend only a negligible
amount of time in $\mf d$ and we shall introduce compactness
conditions which ensure that the limit points of a sequence of
trajectories belongs to this set. These compactness conditions will
exclude sequences as $z_n$ which spend uniformly a non-negligible
amount of time in a set $S^c_m$ for some $m$.
\end{example}

We conclude this section proving in Proposition \ref{se06} below that
the path space $E([0,T], S_{\mf d})$ endowed with the metric $\mb d$
is complete and separable. Recall that we denote by $\Lambda$ the set
of increasing and continuous functions $\lambda:[0,T]\to [0,T]$ such
that $\lambda(0)=0$, $\lambda (T)=T$.

\begin{asser}
\label{ea05}
Let $x$ be a trajectory in $D([0,T],S_{m+1})$ and fix
$\lambda\in\Lambda$. Then, ${\mf R_m} (x\circ \lambda) = ({\mf R_m}
x)\circ \lambda$. The same identity holds for a trajectory $x$ in
$D([0,T],S_{\mf d})$.
\end{asser}

\begin{proof}
Since $x \in D([0,T], S_{m+1})$, there exist $k\ge 1$,
$0=t_0<t_1<\dots <t_k=T$, and $\ell_0, \dots , \ell_k\in S_{m+1}$ such
that $\ell_i \not = \ell_{i+1}$, $0\le i \le k-2$, and
\begin{equation}
\label{e11}
x (t) \;=\; \sum_{i=0}^{k-1} \ell_i \, \mb 1\{ [t_i, t_{i+1}) \} (t)
  \; +\; \ell_k \mb 1\{ t=t_k\} \;.
\end{equation}
Note that $\ell_{k-1}$ may be equal to $\ell_k$ in which case $x$ is
left continuous at $T$. It is easy to obtain from this formula
explicit expressions for ${\mf R_m} (x\circ \lambda)$ and for $({\mf
  R_m} x)\circ \lambda$ and to check that they are equal. 

Consider now a trajectory $x$ in $D([0,T],S_{\mf d})$. Fix
$\lambda\in\Lambda$ and $m\in S$. Recall that we denote by $\sigma^y_m
(t)$ the last visit to $S_m$ before time $t$ for the trajectory
$y$. It is easy to verify that $\sigma^{x\lambda}_m(t) =
\lambda^{-1}(\sigma^{x}_m(\lambda\,t))$, where $x\lambda = x \circ
\lambda$, $\lambda\,t = \lambda(t)$. 

Fix $t\in [0,T]$ and suppose that $x(s)\not\in S_m$ for $0\le s\le
\lambda(t)$. In this case, $x\lambda (s)\not\in S_m$ for $0\le s\le t$
and $({\mf R_m} (x \lambda)) (t) = 1 = ({\mf R_m} x) (\lambda t)$.

If $x(\lambda(t))\in S_m$, $({\mf R_m} (x \lambda)) (t) = (x \lambda)
(t) = ({\mf R_m} x) (\lambda t)$. It remains to consider the case in
which $x(s)\in S_m$ for some $0\le s\le \lambda(t)$ and
$x(\lambda(t))\not\in S_m$. By \eqref{e23}, and since
$y(\lambda^{-1}(s)-) = y\lambda^{-1}(s-)$,
\begin{equation*}
\begin{split}
& ({\mf R_m} (x \lambda)) (t) \;=\; (x \lambda)
(\sigma^{x\lambda}_m(t)-) \;=\; (x \lambda)
(\lambda^{-1}(\sigma^{x}_m(\lambda t))-)  \\
&\qquad \;=\;
x(\sigma^{x}_m(\lambda t)-) \;=\; ({\mf R_m} x) (\lambda(t))\;,
\end{split}
\end{equation*}
which proves the claim. 
\end{proof}

\begin{lemma}
\label{ea06}
The map ${\mf R}_m : D([0,T],S_{m+1}) \to D([0,T],S_m)$ is continuous
for the Skorohod topology.
\end{lemma}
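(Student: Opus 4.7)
The plan is to reduce the statement directly to Assertion \ref{ea05} by exploiting the fact that $S_{m+1}$, viewed as a subspace of $S_{\mf d}$ with the metric $d(k,j)=|k^{-1}-j^{-1}|$, is finite and therefore uniformly discrete: the minimum distance between two distinct points of $S_{m+1}$ equals $\mu_m := |m^{-1}-(m+1)^{-1}|>0$.

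Fix a sequence $x_n$ converging to $x$ in $D([0,T],S_{m+1})$ for the Skorohod topology. By definition there exist $\lambda_n\in\Lambda$ such that $\|\lambda_n\|^o\to 0$ and $\|x_n - x\lambda_n\|_\infty\to 0$, where the sup is taken with respect to the metric on $S_{m+1}$. Since every value of $x_n$ and $x\lambda_n$ lies in $S_{m+1}$, the discreteness observation above forces $x_n(t)=x\lambda_n(t)$ for every $t\in[0,T]$, provided $n$ is large enough that $\|x_n - x\lambda_n\|_\infty<\mu_m$. Thus, for all sufficiently large $n$, $x_n = x\circ\lambda_n$.

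Assertion \ref{ea05} then yields $\mf R_m\, x_n = \mf R_m(x\circ\lambda_n) = (\mf R_m\, x)\circ\lambda_n$ for these $n$. Since $\mf R_m\, x\in D([0,T],S_m)$, plugging the same $\lambda_n$ into the Skorohod metric gives
\begin{equation*}
d_S(\mf R_m\, x_n,\, \mf R_m\, x) \;\le\; \max\Big\{\, \|\mf R_m\, x_n - (\mf R_m\, x)\lambda_n\|_\infty\,,\, \|\lambda_n\|^o \,\Big\} \;=\; \|\lambda_n\|^o\;,
\end{equation*}
which tends to $0$. This proves the continuity of $\mf R_m$ on $D([0,T],S_{m+1})$.

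There is no real obstacle beyond recognising that Skorohod convergence to a $S_{m+1}$-valued trajectory, because of the discreteness of the target, reduces to the existence of time changes $\lambda_n$ for which $x_n$ is \emph{equal} to $x\circ\lambda_n$; once this is in hand, Assertion \ref{ea05} does all the remaining work. The only minor point to verify carefully is the finite lower bound $\mu_m$ on distances within $S_{m+1}$, which is immediate from the definition of the metric $d$ on $S_{\mf d}$ and would fail if we tried to extend the statement to $\mf R_m$ defined on $D([0,T],S_{\mf d})$ itself (whence the formulation of the lemma restricting the domain to $D([0,T],S_{m+1})$).
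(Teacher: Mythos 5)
Your proof is correct and is essentially the paper's own argument: the paper likewise fixes $\epsilon<[m(m+1)]^{-1}$ (your $\mu_m$), deduces $x_n=x\circ\lambda$ from the discreteness of $S_{m+1}$, and then invokes Assertion \ref{ea05} to bound $d_S(\mf R_m x_n,\mf R_m x)$ by $\Vert\lambda\Vert^o$. No substantive difference.
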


\begin{proof} 
Let $x_n$ be a sequence of trajectories in $D([0,T],S_{m+1})$ which
converges in the Skorohod topology to $x$. We will prove that the
sequence of trajectories ${\mf R_m} x_n$ in $D([0,T],S_{m})$ converges
in the Skorohod topology to ${\mf R_m} x$. 

Recall the notation introduced in the beginning of this section.  Fix
$\epsilon < [m(m+1)]^{-1}$. Since $x_n$ converges to $x$, there exists
$n_0$ such that for all $n\ge n_0$
\begin{equation*}
\max \Big\{  \Vert x_n - x \lambda \Vert_\infty \,,\,
\Vert\lambda\Vert^o \Big\} \;<\; \epsilon\;,
\end{equation*}
for some $\lambda\in\Lambda$. Since we chose $\epsilon <
[m(m+1)]^{-1}$, we must have that $x_n = x \lambda$ so that ${\mf R_m}
x_n = {\mf R_m} (x \lambda)$. Since by Assertion \ref{ea05}, ${\mf
  R_m} (x \lambda) = ({\mf R_m} x)\circ \lambda$, we conclude that
\begin{equation*}
d_S( {\mf R_m} x, {\mf R_m} x_n) \;\le\; 
\max \Big\{ \Vert ({\mf R_m} x_n) - ({\mf R_m} x) \circ
\lambda  \Vert_\infty \,,\,
\Vert\lambda\Vert^o \Big\} \;<\; \epsilon\;,
\end{equation*}
which proves the lemma. 
\end{proof}

Fix $1\le k<m$, a trajectory $y\in D([0,T], S_m)$, and $t\in
(0,T]$. Let $x = \mf R_k y$. If $y(t)\le k$, then $x(t)=y(t)$, while
$x(t)\not = y(t)$ if $y(t)>k$ because in this latter case $x(t)\le k <
y(t)$. Hence, if $(\mf R_k y)(t)\not = y(t)$, $y(t)$ is necessarily
greater than $k$.

\begin{asser}
\label{ea08}
Let $y$ be a trajectory in $D([0,T], S_m)$, $m\ge 2$, and let $x= {\mf
  R}_{m-1} y$. Suppose that $x$ is discontinuous at $t\in
(0,T]$. Then, $y(t)=x(t)$ and $y$ is discontinuous at $t$.
\end{asser}

\begin{proof}
We first show that $y(t)=x(t)$ if $x$ is discontinuous at $t\in
(0,T]$. We proceed by contradiction.  Fix $t\in (0,T]$ and suppose
that $y (t) \not = x (t)$. By the remark made just before the
assertion, $y (t) = m$. We want to show that $x$ is continuous at $t$.
Since $y$ belongs to $D([0,T], S_m)$, $y$ can be represented as in
\eqref{e11}. By definition of ${\mf R}_{m-1}$, the only points where
$x$ can be discontinuous are the points $t_i$, $1\le i\le k$.  If
$t=t_i$ and $x(t_i) \not = y(t_i)$, then $y(t_i) = m$, $y(t_{i-1}) \in
S_{m-1}$ (because $y(t_{i-1}) \in S_m$ and $y(t_{i-1}) \not = y(t_i) =
m$) so that $x(t_i) = y(t_{i-1}) = x(t_{i-1}) = x(t_i-)$ and $x$ is
left-continuous at $t_i$.

We now prove the second claim of the assertion. Fix $t\in (0,T]$ and
suppose that $x$ is discontinuous at $t$. By the first part of the
claim, $y(t)=x(t)\in S_{m-1}$.  By definition of ${\mf R}_{m-1}$, $y
(t-) = x(t-)$ or $y(t-)=m$. In the first case $y$ is discontinuous at
$t$ because so is $x$. In the second case $y$ is also discontinuous at
$t$ because $y(t)\in S_{m-1}$. 
\end{proof}

\begin{figure}[htb]
  \centering
  \def\svgwidth{350pt}
  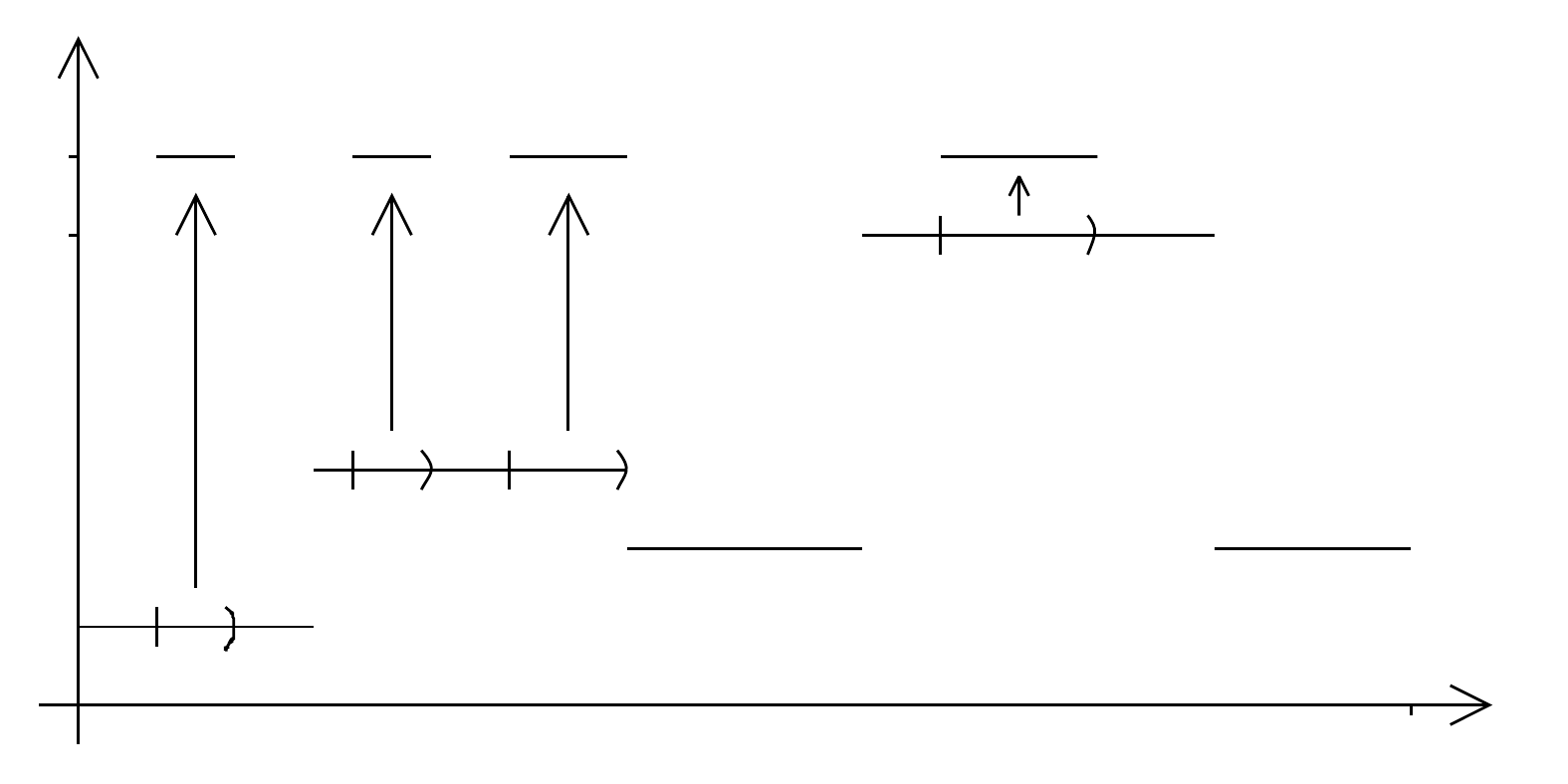
  \caption{The figure illustrates how the trajectory $y_{m+1}$ is
    obtained from the trajectory $y_m$ in Lemma \ref{se05}. The value
    of $y_{m+1}$ is set to be $m+1$ in some intervals $[s,t)$ strictly
    contained in a constancy interval of $y_m$: $y_{m+1}(r) = m+1$ for
    $s\le r <t$, where $a<s<t\le b$ and $y_m$ is constant in $[a,b)$. In
    particular, either $y_{m+1}(t) = y_m (t)$ or $y_{m+1}(t) =
    m+1$. The points $t=0$ and $t=T$ are special. For example, an
    interval $[0,r)$ can be lifted to $m+1$ if $y_m$ is constant in
    $[0,a)$, $r\le a$, and if $y_m =1$ on $[0,a)$. Note that $\mf R_m
    y_{m+1} = y_m$.}
\label{efig3}
\end{figure}

\begin{lemma}
\label{se05}
Let $y_m \in D([0,T], S_m)$, $m\ge 1$, be a sequence of trajectories
such that ${\mf R}_m y_{m+1} = y_m$ for all $m\ge 1$. Then, there
exists a trajectory $y$ in $E([0,T], S_{\mf d})$ such that ${\mf R}_m
y = y_m$ for all $m\ge 1$.
\end{lemma}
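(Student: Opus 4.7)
I take $y(t) := \lim_{m\to\infty} y_m(t)$ pointwise, computed in $S_{\mf d}$ (with $\infty = \mf d$). This limit exists because $(y_m(t))_m$ is non-decreasing in $m$: the hypothesis $\mf R_m y_{m+1} = y_m$ combined with $y_{m+1}(t) \in S_{m+1}$ forces $y_{m+1}(t) \in \{y_m(t), m+1\}$ (in the case $y_{m+1}(t) \in S_m$ one has $(\mf R_m y_{m+1})(t) = y_{m+1}(t) = y_m(t)$; in the case $y_{m+1}(t) \notin S_m$ one has $y_{m+1}(t) = m+1$), hence $y_{m+1}(t) \ge y_m(t)$. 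The rest of the proof verifies (a) soft right-continuity and soft left-limits of $y$, (b) the identity $\mf R_m y = y_m$, and (c) the additional conditions putting $y$ in $E([0,T], S_{\mf d})$ rather than just $\bb E([0,T], S_{\mf d})$.

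For (a), at any $t$ with $y(t) = k \in S$ I pick $n \ge k$ with $y_n(t) = k$; right-continuity of the step function $y_n$ gives $\delta > 0$ with $y_n \equiv k$ on $[t, t+\delta)$, and monotonicity in $m$ then forces $y(s) \in \{k\} \cup S_n^c$ throughout, because once $y_n(s) = k$ each later term either stays at $k$ or jumps above $n$. Since $n$ is arbitrarily large, this is the soft right-continuity of Definition \ref{se04}. If $y(t) = \mf d$, for every $M$ I pick $n$ with $y_n(t) > M$ and argue similarly to obtain $y(t+) = \mf d$. Soft left-limits at $t \in (0,T]$ are treated symmetrically, using that $(y_m(t-))_m$ is also non-decreasing in $m$.

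For (b), fix $m$ and $t$, set $k := y_m(t)$, and let $[\tau, \tau')$ be the maximal constancy interval of $y_m$ containing $t$. Running the monotonicity argument on this interval gives $y(s) \in \{k\} \cup S_m^c$ for every $s \in [\tau, \tau')$. The essential combinatorial fact, visible in Figure \ref{efig3}, is that a lift of $y_{n+1}$ to $n+1$ can begin only \emph{strictly} inside a constancy interval of $y_n$: a lift starting at the left endpoint would make $\mf R_n y_{n+1}$ disagree with $y_n$ there. Iterating in $n \ge m$ forces the set $\{s \in [\tau, \tau') : y(s) = k\}$ to be non-empty (for $\tau > 0$ the point $s = \tau$ works; for $\tau = 0$ one argues with the first point following a lift termination), so $\sigma^y_m(t) \in [\tau, t]$ and rule \eqref{e05} returns $(\mf R_m y)(t) = k = y_m(t)$ in each of its three branches.

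For (c), the initial condition $y(0) \in S$ follows from the convention $(\mf R_m x)(0) = 1$ when $x(0) \notin S_m$: a short induction shows $(y_m(0))_m$ must take the form $1, \dots, 1, M{+}1, M{+}1, \dots$ for some $M \ge 0$, hence stabilizes. The main obstacle is the $\mf d$-condition: for every $t > 0$ with $y(t) = \mf d$ one must have $\tau := \sigma^y_\infty(t) > 0$ and $y(\tau) = y(\tau-) = \mf d$. Since $y_n(t)$ rises along an infinite subsequence $B \subseteq \bb N$, the strictly-inside rule shows that the constancy intervals $J_n = [s_n, t_n)$ of $y_n$-value $n$ at $t$ (for $n \in B$) satisfy $s_n < s_{n'}$ for $n < n'$, so $s_n \uparrow \sigma \le t$; on $(\sigma, t]$ one has $y \equiv \mf d$, and the sequence $y(s_n) \ge n$ along $s_n \uparrow \sigma$ forces $y(\sigma-) = y(\sigma) = \mf d$. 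If the pointwise-limit $y$ happens to admit a spurious isolated $S$-value at some $\tau' < \sigma$ preventing $\tau = \sigma$, one either rules this out by a direct soft-right-continuity analysis at $\tau'$, or replaces $y$ by $\mf R_\infty y$, which lies in $E([0,T], S_{\mf d})$ by Lemma \ref{se10} and satisfies $\mf R_m \mf R_\infty y = \mf R_m y = y_m$ because $\mf R_\infty$ does not alter the last-visit-to-$S_m$ trajectory.
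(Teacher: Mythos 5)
Your construction coincides with the paper's: take $y$ to be the pointwise limit of the non\-decreasing sequence $y_m$, verify soft right-continuity and soft left-limits, prove $\mf R_m y=y_m$ via maximal constancy intervals and the ``lifts begin strictly inside'' rule, and obtain membership in $E([0,T],S_{\mf d})$ from the nested intervals $[s_n,t_n)$. Most of the verification is sound, but two steps need repair. In your proof of $\mf R_m y=y_m$, the claim that $\{s\in[\tau,\tau'):y(s)=k\}$ is non-empty fails in the boundary case $\tau=0$, $k=1$: as the caption of Figure \ref{efig3} indicates, an initial interval on which $y_m\equiv 1$ may be lifted in its entirety (one may even have $y_{m+1}\equiv m+1$ on all of $[0,T]$, since then $(\mf R_m y_{m+1})(t)=1$ by convention), so there need be no ``lift termination'' in $[0,t]$ and $y$ may exceed $m$ on all of $[0,t]$. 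The identity $(\mf R_m y)(t)=1=y_m(t)$ still holds, but only through the convention that $(\mf R_m y)(t)=1$ when $y$ never visits $S_m$ before $t$ --- exactly the separate case the paper's proof treats at the end --- and your argument as written does not reach it.

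The endgame of your part (c) is also left hanging: after showing $y\equiv\mf d$ on $[\sigma,t]$ you must still identify $\sigma^y_\infty(t)$ with $\sigma$, and you offer two unfinished alternatives. The direct route closes with material you already have: each $s_n>0$ is a discontinuity point of $y_n$, so iterating Assertion \ref{ea08} gives $y(s_n)=y_n(s_n)=n\in S$, whence $\sigma^y_\infty(t)\ge s_n$ for every $n$ and therefore $\sigma^y_\infty(t)=\sigma$; no ``spurious'' configuration can occur. The fallback through $\mf R_\infty y$ rests on the identity $\mf R_m\circ\mf R_\infty=\mf R_m$ on $\bb E([0,T],S_{\mf d})$, which is plausible but is proved neither in the paper nor by you, so it should not be invoked without justification. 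Finally, note that $y(s_n)\ge n$ along a sequence only exhibits $\mf d$ as a cluster point from the left at $\sigma$; to conclude $y(\sigma-)=\mf d$ you should use that $y\ge y_n=n$ on the whole interval $[s_n,\sigma)\subset[s_n,t_n)$.
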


\begin{proof}
Since ${\mf R}_m x \le x$, the sequence $y_m$ is increasing and has
therefore a pointwise limit, denoted by $y$. Figure \ref{efig3}
illustrates how the trajectory $y_{m+1}$ is obtained from $y_m$. The
precise mechanism is presented below in the proof. 

Suppose that $y(t) = n \in S$ for some $t\in [0,T]$. In this case
$y_m(t)=n$ for all $m\ge n$. Indeed, if $y_{m_0}(t) \not = n$ for some
${m_0}>n$, then for all $m\ge m_0$, either $y_m(t)=y_{m_0}(t)$ or
$y_m(t)=m>n$, which contradicts the fact that $\lim_m y_m(t)=y(t)=n$.

There exists $1\le m_0\le \infty$ such that $y_m(0)=1$ for $m<m_0$ and
$y_m(0)=m_0$ for $m\ge m_0$. For any trajectory $x$, by our convention
in the definition of ${\mf R}_m$,
\begin{equation*}
({\mf R}_m x)(0) \;=\;
\begin{cases}
x(0) & \text{if $x(0)\le m$,} \\
1 & \text{if $x(0)>m$.}
\end{cases}
\end{equation*}
Therefore $y_m(0) = ({\mf R}_m y_{m+1})(0)$ satisfies the relation
\begin{equation}
\label{e10}
y_m (0) \;=\;
\begin{cases}
y_{m+1}(0) & \text{if $y_{m+1}(0)\le m$,} \\
1 & \text{if $y_{m+1}(0) = m+1$.}
\end{cases}
\end{equation}
Let $m_0 = \min\{j\ge 1 : y_j(0)\not = 1\}$. Assume that $m_0<\infty$,
otherwise there is nothing to be proven. By \eqref{e10} for $m=m_0-1$,
$y_{m_0}(0)=m_0$, and by definition of $m_0$, $y_k(0)=1$ for
$k<m_0$. By \eqref{e10} for $m=m_0$, $y_{m_0+1}(0) = y_{m_0}(0) =
m_0$. Repeating this argument, we conclude that $y_{k}(0)= m_0$ for
all $k\ge m_0$, as claimed.

The trajectory $y$ has a soft left-limit at each point $t\in (0,T]$.
Fix $t\in (0,T]$ and suppose that there exists an increasing sequence
$t_j$ converging to $t$ and such that $y(t_j)\to n\in S$. For $j$
large enough $y(t_j) = n$. We assume, without loss of generality, that
this holds for all $j$: $y(t_j)=n$ for all $j\ge 1$. By the penultimate
paragraph, $y_m(t_j)=n$ for all $m\ge n$ and $j\ge 1$. This proves
that $y_m(t-)=n$ for all $m\ge n$. In particular, by Remark
\ref{se03}, $y$ has a soft left-limit at $t$.

It is not difficult to construct an example of a sequence $y_m$ for
which $y$ has a soft left-limit at $t\in (0,T]$, but not a left-limit,
i.e., a sequence $y_m$ for which there exist increasing sequences
$t_j$, $t'_j$ converging to $t$ and such that $y(t_j)\to n\in S$,
$y(t'_j)\to \mf d$.

The trajectory $y$ is soft right-continuous. Fix $t\in [0,T)$ and
suppose that there exists a decreasing sequence $t_j$ converging to
$t$ and such that $y(t_j)\to n\in S$. The argument presented above
shows that $y_m(t)=n$ for all $m\ge n$, which proves, in view of
Remark \ref{se03}, that $y$ has a soft right-limit at $t$ equal to
$n$. Since $y_m(t)=n$ for all $m\ge n$, $y(t)=n$, which proves that
$y$ is soft right-continuous at $t$.

Fix $t\in (0,T]$ and assume that there exists $m$ for which $y_m$ is
discontinuous at $t$. By Assertion \ref{ea08}, $y_{m+1} (t) = y_{m}
(t)$ and $y_{m+1}$ is discontinuous at $t$. Repeating this argument,
we conclude that $y_n(t) = y_m(t)$ for all $n\ge m$ so that $y(t) =
y_m(t)\in S$.

The trajectory $y$ belongs to $E([0,T], S_{\mf d})$. We proved above
that $y(0)\in S$. Assume that $y(t)=\mf d$ for some $t\in (0,T]$. By
the previous paragraph, $t$ is a continuity point of $y_m$ for every
$m$. Denote by $[\ell_m,r_m)$ the largest interval which contains $t$
and in which $y_m$ is constant. $\ell_m$ is a non-decreasing sequence
bounded above by $t$. Denote by $\ell$ its limit. It is clear that
$\ell = \sigma^y_\infty(t)$, that $y(\ell)=\mf d$ and that
$y(\ell-)=\mf d$. We claim that $\ell>0$. By construction, there
exists $m_0$ such that $y_m(0)=1$ for $m<m_0$ and $y_m(0)=m_0$ for
$m\ge m_0$. As $y(t)=\mf d$, there exists $m_1$ such that $y_m(t)
>m_0$ for $m\ge m_1$. In particular, for $m\ge m_1$, $\ell_m >0$,
which proves that $\ell>0$ and that $y$ belongs to $E([0,T], S_{\mf
  d})$.

It remains to show that ${\mf R_m} y = y_m$ for all $m\ge 1$. Fix
$m\ge 1$ and $t\in [0,T]$. If $t$ is a point of discontinuity of
$y_m$, by Assertion \ref{ea08}, $y_n(t) = y_m(t)$ for all $n\ge m$ so
that $y(t)=y_m(t) \in S_m$ and $({\mf R_m} y)(t) = y_m (t)$. If $t$ is
a continuity point of $y_m$, as above, let $[\ell_m, r_m)$ the largest
constancy interval of $y_m$ which contains $t$. If $\ell_m>0$,
$\ell_m$ is a discontinuity point of $y_m$ so that $y(\ell_m) =
y_m(\ell_m)\in S_m$. By definition of the sequence $y_k$, for $k>m$
and $\ell_m \le s\le t$, $y_k(s)=y_m(s) = y_m(\ell_m)$ or
$y_k(s)>m$. Hence, for $\ell_m \le s\le t$, $y(s)= y_m(\ell_m)$
or $y(s)>m$, so that $({\mf R_m} y)(t) = y_m(\ell_m) = y_m (t)$. If
$\ell_m = 0$ and $y_m(0)\not =1$, the same argument holds since the
sequence $y_k(0)$, $k\ge m$, is constant by the assertion above
\eqref{e10}. If $\ell_m = 0$ and $y_m(0) =1$, the argument can be
adapted even if the sequence $y_k(0)$ may not be constant.  By the
assertion above \eqref{e10}, for $k>m$ and $0 \le s\le t$,
$y_k(s)=y_m(s) = 1$ or $y_k(s)>m$. Hence, for $0 \le s\le t$, $y(s)=1$
or $y(s)>m$. If $y(s)>m$ for all $0\le s\le t$, by our convention in
the definition of ${\mf R_m}$, $({\mf R_m} y)(t) = 1 = y_m (t)$. If
there exists $0\le s\le t$ such that $y(s)=1$ we also have that $({\mf
  R_m} y)(t) = 1 = y_m (t)$. This concludes the proof of the
lemma. 
\end{proof}

\begin{remark}
\label{ea16}
Let $y\in E([0,T], S_{\mf d})$, $y_m(t)$, $m\ge 1$, be the
trajectories appearing in the statement of the previous lemma.  It
follows from Assertion \ref{ea08} that if $y_m(t)$ is discontinuous at
$t\in (0,T]$, the sequence $\{y_\ell (t) : \ell\ge m\}$ is constant
and $y(t) = y_m(t)$. 

Fix $m\ge 1$. For $\ell\ge m$, since $y_\ell$ belongs to $D([0,T],
S_{\ell})$, the set $I_\ell = \{t\in [0, T] : y_\ell(t) = m\}$ is the
union of intervals $[s^\ell_k,t^\ell_k)$, $1\le k \le n_\ell$. The
last interval may be closed, all the other ones are closed on the left
and open on the right.  The intervals are disjoint,
$t^\ell_k<s^\ell_{k+1}$, $1\le k<n_\ell$, and nondegenerate,
$s^\ell_k<t^\ell_k$, excepet the last one which can be reduced to a
point.

The sequence $I_\ell$ is decreasing, $I_{\ell+1}\subset I_\ell$, and a
left end-point $s^\ell_k$ of $I_\ell$ belongs to $\cap_{\ell'\ge
  \ell}I_{\ell'}$: $s^\ell_k = s^{\ell+1}_j$ for some $1\le j \le
n_{\ell+1}$. In particular, the number of intervals may only increase,
$n_\ell \le n_{\ell+1}$. The set $\{t\in [0,T]: y(t) =m\}$ is equal to
the limit of the sets $I_\ell$,
\begin{equation*}
\{t\in [0,T]: y(t) =m\} \;=\; \bigcap_{\ell \ge m} \{t\in [0,T]:
  y_\ell(t) =m\}\; .
\end{equation*}
\end{remark}

\begin{proposition}
\label{se06}
The space $E([0,T], S_{\mf d})$ endowed with the metric $\mb d
(x,y)$ is complete and separable.
\end{proposition}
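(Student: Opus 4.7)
The plan is to reduce both assertions to properties of the classical Skorohod spaces by exploiting the identity $\mb d(x,y)=\sum_{m\ge 1}2^{-m}\, d_S(\mf R_m x,\mf R_m y)$, together with the fact that each $d_S$-distance is bounded (by $1$, taking $\lambda=\mathrm{id}$, since the metric on $S_{\mf d}$ is bounded by $1$) and that each $D([0,T],S_m)$ is complete and separable. A key intermediate fact I would use throughout is the semigroup identity $\mf R_m\circ\mf R_M=\mf R_m$ for $m\le M$, which is immediate from the defining formula \eqref{e05}.

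For \textbf{separability}, fix a countable Skorohod-dense subset $\mc D_m\subset D([0,T],S_m)$ for each $m\ge 1$ and set $\mc D=\bigcup_{m\ge 1}\mc D_m$. Every $z\in\mc D_m$ lies in $E([0,T],S_{\mf d})$ since it never takes the value $\mf d$, which makes the defining condition on $\mf d$ vacuous; thus $\mc D$ is countable and contained in $E([0,T],S_{\mf d})$. Given $x\in E([0,T],S_{\mf d})$ and $\epsilon>0$, choose $M$ with $\sum_{m>M}2^{-m}<\epsilon/2$, then pick $z\in\mc D_M$ with $d_S(\mf R_M x,z)$ small. Writing $d_m(x,z)=d_S(\mf R_m\mf R_M x,\mf R_m z)$ for $m\le M$ by the semigroup identity and invoking the continuity of $\mf R_m:D([0,T],S_M)\to D([0,T],S_m)$ (obtained by iterating Lemma \ref{ea06}), one arranges $d_m(x,z)<\epsilon/2$ for every $m\le M$. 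Bounding $d_m(x,z)\le 1$ for $m>M$ then yields $\mb d(x,z)<\epsilon$.

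For \textbf{completeness}, let $(x_n)$ be a Cauchy sequence in $(E([0,T],S_{\mf d}),\mb d)$. The bound $d_m(x_n,x_k)\le 2^m\mb d(x_n,x_k)$ shows that $(\mf R_m x_n)_n$ is Cauchy in the complete space $D([0,T],S_m)$ for each fixed $m$, hence converges in the Skorohod topology to some $y_m\in D([0,T],S_m)$. Applying the continuous map $\mf R_m:D([0,T],S_{m+1})\to D([0,T],S_m)$ of Lemma \ref{ea06} to the convergence $\mf R_{m+1}x_n\to y_{m+1}$ and using the identity $\mf R_m\mf R_{m+1}x_n=\mf R_m x_n$ yields the consistency relation $\mf R_m y_{m+1}=y_m$. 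Lemma \ref{se05} then provides a trajectory $y\in E([0,T],S_{\mf d})$ with $\mf R_m y=y_m$ for every $m\ge 1$, and $\mb d(x_n,y)=\sum_{m\ge 1}2^{-m}d_S(\mf R_m x_n,y_m)\to 0$ by dominated convergence, since each summand tends to $0$ and is dominated by $2^{-m}$.

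The \textbf{main obstacle} is the passage from the family of Skorohod limits $(y_m)_{m\ge 1}$, each in its own ambient space $D([0,T],S_m)$, to a single trajectory $y\in E([0,T],S_{\mf d})$ that projects to every $y_m$. This is precisely the purpose of Lemma \ref{se05}, whose proof relies on the careful soft-limit analysis developed earlier in the section; once it is available, the rest of the argument is straightforward bookkeeping around the continuity and semigroup properties of the maps $\mf R_m$.
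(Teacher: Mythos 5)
Your proposal is correct and follows essentially the same route as the paper: completeness via the Cauchyness of each $\mf R_m x_n$ in $D([0,T],S_m)$, the consistency relation $\mf R_m y_{m+1}=y_m$ from Lemma \ref{ea06}, and Lemma \ref{se05} to assemble the limit; separability via the union of countable dense subsets of the spaces $D([0,T],S_m)$. The only cosmetic difference is in the separability step, where you invoke the (iterated) continuity of $\mf R_m$ qualitatively while the paper makes the same point quantitatively by choosing the approximant within distance $[m(m-1)]^{-1}$ so that it coincides with $(\mf R_m x)\circ\lambda$; both rest on the same semigroup identity $\mf R_\ell\circ\mf R_m=\mf R_\ell$ and Assertion \ref{ea05}.
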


\begin{proof}
Consider a Cauchy sequence $\{x_n : n\ge 1\}$ in $E([0,T], S_{\mf d})$
for the metric $\mb d$. By definition of $\mb d$, for each $m\ge 1$,
${\mf R_m} x_n$ is a Cauchy sequence in $D([0,T], S_m)$ for the metric
$d_S$. Since this space is complete, there exists $y_m \in D([0,T],
S_m)$ such that ${\mf R_m} x_n\to y_m$ as $n\uparrow\infty$. By
Lemma \ref{ea06}, ${\mf R_m} y_{m+1} = y_m$.  Hence, by Lemma
\ref{se05}, there exists $y\in E([0,T], S_{\mf d})$ such that ${\mf
  R_m} y = y_m$ for all $m\ge 1$. Therefore, ${\mf R_m} x_n\to y_m =
{\mf R_m} y$, which implies that $x_n$ converges to $y$ in $E([0,T],
S_{\mf d})$. This proves completeness.

The separability of $E([0,T], S_{\mf d})$ follows from the
separability of each set $D([0,T]$, $S_m)$. Since the set $D([0,T],
S_m)$, $m\ge 1$, endowed with the metric $d_S$ is separable, for each
$m\ge 1$ there exists a sequence of trajectories $x_{m,n}$, $n\ge 1$,
which is dense in $D([0,T], S_m)$ for the metric $d_S$. We claim that
the countable set of trajectories $x_{m,n}$, $n\ge 1$, $m\ge 1$ is
dense.

Fix a trajectory $x$ in $E([0,T], S_{\mf d})$ and $\epsilon >0$. Take
$m\ge 1$ such that $2^{-m}<\epsilon$ and $x_{m,n}$ in $D([0,T], S_m)$
such that $d_S(x_{m,n}, {\mf R_m} x) < \min\{\epsilon,
[m(m-1)]^{-1}\}$. There exists $\lambda$ in $\Lambda$ such that
\begin{equation*}
\max\{ \Vert x_{m,n} - ({\mf R_m} x)\circ \lambda \Vert_\infty\,,\, 
\Vert \lambda\Vert^o \} \; < \; \min\{\epsilon, [m(m-1)]^{-1}\}\;.
\end{equation*}
Since $\Vert x_{m,n} - ({\mf R_m} x)\circ\lambda \Vert_\infty <
[m(m-1)]^{-1}$, $x_{m,n} = ({\mf R_m} x) \circ \lambda$. Hence, by
Assertion \ref{ea05}, for $\ell \le m$, ${\mf R_\ell} x_{m,n} = {\mf
  R_\ell} [({\mf R_m} x) \circ \lambda] = ({\mf R_\ell} x) \circ
\lambda$. In particular,
\begin{equation*}
d_S({\mf R_\ell} x_{m,n}, {\mf R_\ell} x) \;\le\; \Vert \lambda\Vert^o
\;<\; \epsilon\;.
\end{equation*}
Putting together the previous estimates, as $d_S(x,y) \le 1$ for any
pair of trajectories in $D([0,T], S_\ell)$, we obtain that
\begin{equation*}
\sum_{\ell\ge 1} \frac 1{2^\ell}\, d_S({\mf R_\ell} x_{m,n}, {\mf R_\ell} x)
\;\le\; \sum_{\ell=1}^m \frac 1{2^\ell}\, d_S({\mf R_\ell} x_{m,n}, {\mf R_\ell} x)
\;+\; \epsilon \;\le\; 2\epsilon\;.
\end{equation*}
This concludes the proof of the proposition.
\end{proof}


By extension, we call soft topology the topology in $E([0,T], S_{\mf
  d})$ induced by the metric $\mb d$.  A sequence of trajectories
$x_n\in E([0,T], S_{\mf d})$ which converges converges in the soft
topology is said to s-converges. Denote by $\mc B$ the Borel
$\sigma$-algebra of subsets of $E([0,T], S_{\mf d})$ spanned by the
open sets of the soft topology.


\begin{asser}
\label{ea15}
The subspaces $D([0,T], S_m)$, $m\ge 1$, of $E([0,T], S_m)$ are closed
for the soft topology.
\end{asser}

\begin{proof}
Consider a sequence $x_n\in D([0,T], S_m)$ s-converging to $x$. For
all $\ell\ge 1$, ${\mf R_\ell} x_n$ converges in the Skorohod topology
to ${\mf R_\ell} x$. Since $x_n$ belongs to $D([0,T], S_m)$, ${\mf
  R_\ell} x_n = {\mf R_m} x_n$ for $\ell \ge m$, so that $x =
\lim_\ell {\mf R_\ell} x = \lim_\ell \lim_n {\mf R_\ell} x_n = \lim_n
{\mf R_m} x_n = {\mf R_m} x \in D([0,T], S_m)$.
\end{proof}

\section{The space $D^*([0,T], S_{\mf d})$}
\label{esec2}


Denote by $D^*([0,T], S_{\mf d})$ the subset of all trajectories in
$D([0,T], S_{\mf d})$ which spend no time at $\mf d$ and which are
continuous at time $T$:
\begin{equation*}
D^*([0,T], S_{\mf d}) \;=\; \Big\{ x\in D([0,T], S_{\mf d}) :
\Lambda_T(x) = 0 \,,\, x(T-) = x(T) \Big\}\;,
\end{equation*}
where
\begin{equation*}
\Lambda_T(x) \;=\; \int_0^T \mb 1\{x(s) = \mf d\} \, ds\;.
\end{equation*}


Since a trajectory $x$ in $D^*([0,T], S_{\mf d})$ spends no time at
$\mf d$, $\sigma_\infty^x(t) = t$ for all $t\in [0,T]$. In particular, by
definition of the map $\mf R_\infty$, for $x$ in $D^*([0,T], S_{\mf d})$
\begin{equation}
\label{e16}
(\mf R_\infty x)(t)\;=\;
\begin{cases}
x(t) & \text{if $x(t)\in S$,} \\
x(t-) & \text{if $x(t) = \mf d$.}
\end{cases}
\end{equation}
Therefore, $(\mf R_\infty x)(t) \not = x(t)$ only if $x(t) = \mf d
\not = x(t-)$ and $(\mf R_\infty x)(T) = x(T)$.

\begin{asser}
\label{ea11}
The map $\mf R_\infty : D^*([0,T], S_{\mf d}) \to E([0,T], S_{\mf d})$
is one-to-one.
\end{asser}

\begin{proof}
Fix two trajectories $x$, $y\in D^*([0,T], S_{\mf d})$ and suppose
that $\mf R_\infty x = \mf R_\infty y$. Let $A=\{ t\in [0,T] : x(t)=
\mf d \text{ or } y(t) = \mf d\}$. By \eqref{e16}, $x(t)=y(t)$ for
$t\not \in A$. Hence, since the set $A$ has measure zero and since $x$
and $y$ are right continuous, $x(t) = y(t)$ for $t\in [0,T)$. On the
other hand, as we have seen just below \eqref{e16}, $x(T) = (\mf R_\infty
x)(T) = (\mf R_\infty y)(T) = y(T)$. 
\end{proof}


\begin{definition}
  Denote by $E^*([0,T], S_{\mf d})$ the range of the map $\mf R_\infty
  : D^*([0,T], S_{\mf d}) \to E([0,T]$, $S_{\mf d})$.
\end{definition}

Fix $x\in E([0,T], S_{\mf d})$. By Assertions \ref{ea10} and
\ref{ea02}, $x= \lim_m {\mf R}_m x$. In particular $x:[0,T]\to \bb R$
is Borel measurable and $\Lambda_T(x)$ is well defined.

\begin{lemma}
\label{ea13}
A trajectory $y$ in $E([0,T], S_{\mf d})$ belongs to $E^*([0,T],
S_{\mf d})$ if and only if
\begin{itemize}
\item[{\rm (a)}] $y$ has left and right-limits at every point;
\item[{\rm (b)}] If $y(t+) = \mf d$ for some $t\in [0,T)$, then
  $y(t)=y(t-)$;
\item[{\rm (c)}] $y$ is continuous at $T$;
\item[{\rm (d)}] $\Lambda_T(y)=0$.
\end{itemize}

\end{lemma}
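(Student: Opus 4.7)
The plan is to prove the two directions separately, using formula \eqref{e16} as the main tool. The forward direction is essentially bookkeeping from \eqref{e16}; the real content lies in the reverse direction, where I would produce an explicit preimage and check it has the right properties.

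For ($\Rightarrow$), suppose $y = \mf R_\infty x$ with $x \in D^*([0,T], S_{\mf d})$. Since $x$ is c\`adl\`ag and $S$ is discrete inside $S_{\mf d}$, a short case analysis on whether $x(t)$ (resp.\ $x(t-)$) lies in $S$ or equals $\mf d$ yields $y(t-) = x(t-)$ and $y(t+) = x(t)$ at every $t$. Condition (a) is then immediate from the c\`adl\`ag limits of $x$, (c) follows from $x(T-) = x(T)$, and (d) from $\{y = \mf d\} \subseteq \{x = \mf d\}$ together with $\Lambda_T(x) = 0$. For (b), $y(t+) = \mf d$ forces $x(t) = \mf d$, hence $y(t) = x(t-) = y(t-)$.

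For ($\Leftarrow$), given $y$ satisfying (a)--(d), I plan to define $x(t) := y(t+)$ for $t \in [0,T)$ and $x(T) := y(T)$, and then verify $x \in D^*([0,T], S_{\mf d})$ and $\mf R_\infty x = y$. Right-continuity of $x$ on $[0,T)$ is a one-line metric argument (if $y(s) \to L$ as $s \downarrow t$, then $y(s+) \to L$ as well), an analogous computation gives $x(t-) = y(t-)$, and (c) delivers $x(T-) = x(T)$, so $x$ is c\`adl\`ag on $[0,T]$ with $x(T-) = x(T)$. The identity $\mf R_\infty x = y$ would then be checked via \eqref{e16}: when $y(t+) \in S$, the existence of $y(t+)$ from (a) combined with soft right-continuity of $y$ excludes alternatives (a) and (c) of Definition \ref{se04} and forces $y(t+) = y(t)$, yielding $(\mf R_\infty x)(t) = x(t) = y(t+) = y(t)$; when $y(t+) = \mf d$, condition (b) gives $(\mf R_\infty x)(t) = x(t-) = y(t-) = y(t)$; the endpoint $t = T$ is handled directly using (c).

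The main obstacle will be checking $\Lambda_T(x) = 0$, i.e.\ that $B := \{t \in [0,T) : y(t+) = \mf d\}$ is Lebesgue-null. I would split $B = B' \cup B''$ where $B' := B \cap \{y = \mf d\}$ and $B'' := B \cap \{y \in S\}$. The piece $B'$ has measure zero by (d). For $B''$, the key observation is that if $t \in B''$ with $y(t) = k \in S$, then $y(t+) = \mf d$ produces $\epsilon_t > 0$ with $y(s) \neq k$ on $(t, t + \epsilon_t)$; consequently any two distinct points $t_1 < t_2$ of $B'' \cap \{y = k\}$ satisfy $t_2 \geq t_1 + \epsilon_{t_1}$, which lets me inject $B'' \cap \{y = k\}$ into $\bb Q$ by selecting a rational in each interval $(t, t + \epsilon_t)$. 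Summing over $k \in S$ shows $B''$ is countable, so $\Lambda_T(x) = 0$.
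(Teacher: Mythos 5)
Your proposal is correct and follows essentially the same route as the paper's proof: the forward direction reads off (a)--(d) from \eqref{e16}, and the reverse direction builds the preimage $x(t)=y(t+)$, checks $\mf R_\infty x=y$ by the same two cases, and reduces $\Lambda_T(x)=0$ to the countability of $\{t: y(t+)=\mf d,\ y(t)\in S\}$. The only (immaterial) difference is that you certify countability via the disjoint right-hand intervals $(t,t+\epsilon_t)$ on which $y$ avoids $k$, whereas the paper uses condition (b) to produce disjoint left constancy intervals $[t-\epsilon,t]$; both work.
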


\begin{proof}
Fix a trajectory $y$ in $E^*([0,T], S_{\mf d})$. Let $x\in D^*([0,T],
S_{\mf d})$ such that $y = {\mf R_\infty} x$. It follows from
\eqref{e16} that $y(t+)=x(t+)$, $y(t-)=x(t-)$, which proves
(a). Assume that $y(t+) = \mf d$ for some $t\in [0,T)$. As we just
have seen, $x(t+)= \mf d$. Since $x$ is right continuous, $x(t)=\mf
d$. Thus, by \eqref{e16}, $y(t)=x(t-)$. By the first part of the
proof, $x(t-)=y(t-)$, so that $y(t)=y(t-)$, which proves (b). To
verify (c), recall from \eqref{e16} that $y(T) = x(T)$ and from the
first part of the proof that $y(T-) = x(T-)$. Since $x$ belongs to
$D^*([0,T], S_{\mf d})$, $x(T)=x(T-)$ so that $y(T)=y(T-)$. Finally, since
$y(t)\in S$ whenever $x(t)\in S$, $x(t)=\mf d$ if $y(t)=\mf d$, and
$\Lambda_T(y) \le \Lambda_T(x) =0$.

Conversely, let $y$ be a trajectory in $E([0,T], S_{\mf d})$ which
fulfills conditions (a)--(d). Let $x$ be the trajectory defined by
$x(t) = y(t+)$, $0\le t<T$, $x(T)=y(T)$.  We claim that $x\in
D^*([0,T], S_{\mf d})$. By definition, $x$ is right continuous and has
left limits, and $x(t+) = y(t+)$, $x(t-) = y(t-)$.  Therefore, $x\in
D([0,T], S_{\mf d})$, and, by assumption (c), $x(T)=x(T-)$.

By definition of $x$,
\begin{equation*}
\Lambda_T(x) \;=\; \int_0^T \mb 1\{y(s+) = \mf d\} \, ds\;.
\end{equation*}
Fix $t\in [0,T)$ such that $y(t+) = \mf d$. Then, either $y(t)=\mf d$
or, by assumption (b), $y(t-) = y(t) \in S$. The first set of points
has Lebesgue measure zero because $\Lambda_T(y)=0$ by assumption
(d). The second set is at most countable because $y$ is constant on an
interval $[t-\epsilon, t]$ if $y(t-) = y(t) \in S$. This proves that
$\Lambda_T(x) =0$.

It remains to show that $\mf R_\infty x = y$. Suppose that $x(t)\in
S$. By the definition \eqref{e16} of $\mf R_\infty$, $(\mf R_\infty
x)(t) = x(t) = y(t+)$. Since $y$ is soft right-continuous and since
$y$ has a right-limit which belongs to $S$, $y(t+) = y(t)$, so that
$(\mf R_\infty x)(t) = y(t)$.  Suppose now that $x(t)=\mf d$, so that
$y(t+)=\mf d$. By definition \eqref{e16} of $\mf R_\infty$, $(\mf
R_\infty x)(t) = x(t-) = y(t-)$. Since $y(t+)=\mf d$, by assumption
(b), $y(t-)=y(t)$ so that $(\mf R_\infty x)(t) = y(t)$. 
\end{proof}

The set $E^*([0,T], S_{\mf d})$ is clearly not closed for the soft
topology. Lemma \ref{se16} shows that it belongs to $\mc B$, and Lemma
\ref{se11} provides sufficient conditions for the limit $x$ of a
converging sequence $x_n$ in $E^*([0,T], S_{\mf d})$ to belong to
$E^*([0,T], S_{\mf d})$.

For a trajectory $x\in D([0,T], S_{m})$ and $1\le j\le m$, denote by
$\mf N_j = \mf N_j(x)$ the number of visits of $x$ to $j$ in the time
interval $[0,T]$, and denote by $T_{j,1}, \dots, T_{j,\mf N_j}$ the
holding times at $j$. Hence, if the trajectory $x$ is given by
\begin{equation*}
x (t) \;=\; \sum_{i=0}^{k-1} \ell_i \, \mb 1\{ [t_i, t_{i+1}) \} (t)
  \; +\; \ell_k \mb 1\{[t_k, T]\} \;,
\end{equation*}
where $0=t_0<t_1<\dots <t_k\le t_{k+1} = T$, and $\ell_i \not =
\ell_{i+1}$, $0\le i \le k-1$, and if we denote by $I_j$ the set $\{ i
\in \{0, \dots, k\} : \ell_i = j\big\}$, we have that $\mf N_j (x) =
|I_j|$. Moreover, if $\mf N_j \ge 1$ and if $I_j = \{i_1, \dots,
i_{\mf N_j}\}$, where $i_a<i_{a+1}$ for $1\le a < \mf N_j$,
\begin{equation}
\label{e18}
T_{j,1} \;=\; t(i_1+1) - t (i_1)\;,\; \dots \;,\;
T_{j,\mf N_j} \;=\; t (i_{\mf N_j}+1) - t(i_{\mf N_j})\;.
\end{equation}
In this formula, to avoid small indices we represented $t_{i_a}$ by
$t(i_a)$. By convention, $T_{j,\ell} =0$ for $\ell>\mf N_j$.

\begin{asser}
\label{ea14}
Let $x$ be a trajectory in $E^*([0,T], S_{\mf d})$. Then, for all
$\ell \ge 1$, $\mf R_\ell x$ is continuous at $T$.
\end{asser}

\begin{proof}
Fix $\ell\ge 1$.  By Lemma \ref{ea13}, $x$ is continuous at
$T$. Suppose that $x(T)\in S$. In this case, $x$ is constant in an
interval $(T-\delta, T]$, $\delta>0$, and so is $\mf R_\ell x$.

Suppose that $x(T)=\mf d$. Fix $m>\ell$. There exists $\delta>0$ such
that $x(t)\ge m$ on $(T-\delta, T]$. Hence, $\sigma_\ell$ is constant
in this interval and so is $\mf R_\ell (x)$.  
\end{proof}


\begin{asser}
\label{ea12}
The functionals $\mf N_k$, $1\le k\le m$, are continuous with respect
to the Skorohod topology in $D([0,T], S_{m})$, and the sets $\{x :
T_{j,\ell} \ge a\}$, $a>0$, are closed.
\end{asser}

\begin{proof}
Fix $1\le k\le m$, and let $\{x_n : n\ge 1\}$ be a sequence in
$D([0,T], S_{m})$ which converges to a trajectory $x$ in the
Skorohod topology. Fix $\epsilon < [m(m-1)]^{-1}$. Since $x_n$
converges to $x$, there exists $n$ sufficiently large and
$\lambda_n\in \Lambda$ such that
\begin{equation*}
\Vert x_n - x\lambda_n \Vert_\infty\;<\; \epsilon\;.
\end{equation*}
Since $\epsilon < [m(m-1)]^{-1}$ we have that $x_n = x\lambda_n$ so that
$\mf N_k (x\lambda_n) = \mf N_k (x_n)$. Since $\mf N_k (x\lambda_n) = \mf
N_k (x)$, we conclude that the sequence $\mf N_k (x_n)$ is eventually
constant and converges to $\mf N_k (x)$.

To prove that the sets $\{x : T_{j,\ell} \ge a\}$ are closed, fix
$1\le j\le m$, $\ell\ge 1$, $a>0$, and consider a sequence $x_n$
converging in the Skorohod topology to some trajectory $x$. Suppose
that $T_{j,\ell} (x_n) \ge a$ for all $n\ge 1$ and fix $0<\epsilon<
[m(m-1)]^{-1}$. There exists $\lambda_n \in \Lambda$ such that $\Vert
x_n - x\lambda_n \Vert_\infty < \epsilon$, $\Vert \lambda_n\Vert^o <
\epsilon$ for all $n$ large enough. As in the first part of the proof,
we deduce from this estimate that $x_n = x\lambda_n$ so that $\mf N_j
(x_n) = \mf N_j (x \lambda_n) = \mf N_j (x)$ and $T_{j,\ell} (x_n) =
T_{j,\ell} (x \lambda_n)$ for $n$ large enough. Since $T_{j,\ell}
(x_n) \ge a$, $\ell\le \mf N_j (x_n) = \mf N_j (x)$.  Denote by
$[s,t)$ the time interval of the $\ell$-th visit to $j$ for the
trajectory $x$, so that $T_{j,\ell} (x \lambda_n) = \lambda^{-1}_n(t) -
\lambda^{-1}_n(s)$. Since $T_{j,\ell} (x_n) = T_{j,\ell} (x
\lambda_n)$ and since $T_{j,\ell} (x_n) \ge a$, $\lambda^{-1}_n(t) -
\lambda^{-1}_n(s)\ge a$.  However, as $\Vert \lambda_n\Vert^o <
\epsilon$, $e^{-\epsilon} (t-s) \le \lambda^{-1}_n(t) -
\lambda^{-1}_n(s) \le e^{\epsilon} (t-s)$. Therefore, $T_{j,\ell} (x) =
t-s \ge e^{-\epsilon} [\lambda^{-1}_n(t) - \lambda^{-1}_n(s)] \ge
e^{-\epsilon} a$, which proves the assertion. 
\end{proof}

By expressing all conditions of Lemma \ref{ea13} in terms of the
trajectories $\mf R_\ell x$, we show in the next lemma that the set
$E^*([0,T], S_{\mf d})$ belongs to the Borel $\sigma$-algebra $\mc B$.

\begin{lemma}
\label{se16}
The set $E^*([0,T], S_{\mf d})$ belongs to the Borel $\sigma$-algebra
$\mc B$. 
\end{lemma}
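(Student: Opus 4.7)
By Lemma~\ref{ea13}, $E^*([0,T], S_{\mf d})$ is characterized within $E([0,T], S_{\mf d})$ by the four conditions (a)--(d). My plan is to show each of these conditions determines a Borel subset, so that their intersection, namely $E^*([0,T], S_{\mf d})$, lies in $\mc B$.

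The basic tool is that each projection $\mf R_\ell: E([0,T], S_{\mf d}) \to D([0,T], S_\ell)$ is continuous, immediate from the definition \eqref{e06} of $\mb d$ (which gives $\mb d(x,y) \ge 2^{-\ell} d_S(\mf R_\ell x, \mf R_\ell y)$); hence any Borel event on any $\mf R_\ell y$ is Borel in $y$. Combining this with the pointwise convergence $y = \lim_\ell \mf R_\ell y$ (Assertions~\ref{ea10} and \ref{ea02}) and the monotonicity $\mf R_\ell y \le \mf R_{\ell+1} y$, one obtains the identities
\begin{equation*}
\{s : y(s) = j\} \;=\; \bigcap_{\ell \ge j}\{s : (\mf R_\ell y)(s) = j\}\;, \qquad \{s : y(s) > m\} \;=\; \bigcup_{\ell \ge 1}\{s : (\mf R_\ell y)(s) > m\}\;.
\end{equation*}

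For condition (d), the occupation time $z \mapsto |\{s : z(s) = j\}|$ on $D([0,T], S_\ell)$ is Skorohod-continuous (it is an integral of a bounded continuous function of $z(s)$ against $ds$); by the first identity, $y \mapsto |\{y = j\}|$ is a decreasing limit of continuous functions, hence Borel. Summing in $j$ and subtracting from $T$ shows $\Lambda_T$ is Borel, so $\{\Lambda_T = 0\} \in \mc B$. For condition (c), I would split on the Borel events $\{y(T) = k\}$ ($k \in S$) and $\{y(T) = \mf d\}$, using that $(\mf R_\ell y)(T)$ stabilizes in $\ell$ precisely when $y(T) \in S$. On the former, continuity of $y$ at $T$ is the existence of a rational $\delta > 0$ with $\mf R_\ell y \equiv k$ on $(T - \delta, T]$ for every $\ell \ge k$; on the latter, it is the condition that for every $m$ there is a rational $\delta > 0$ with $y(s) > m$ on $(T - \delta, T]$, which via the second identity becomes a countable Boolean combination of events on the $\mf R_\ell y$. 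For conditions (a)--(b), a parallel case analysis (``eventually constant'' versus ``genuinely escapes to $\mf d$'' in a one-sided rational neighborhood, together with the required matching $y(t)=y(t-)$ in (b)) expresses both as Borel events in the family $\{\mf R_\ell y\}_{\ell\ge 1}$.

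The main obstacle is condition (a): an oscillatory soft left-limit of pair type $\{n, \mf d\}$ at a point $t$ cannot be detected from any single $\mf R_\ell y$, because for every $\ell \ge n$ the approximation $\mf R_\ell y$ is constantly equal to $n$ in a left-neighborhood of $t$ and filters out the large-value excursions. The resolution is to use the whole family in concert: such oscillation forces $\sup_\ell (\mf R_\ell y)(s)$ to be unbounded for $s$ arbitrarily close to $t$ from the left (equivalently, for every $m$ and every rational $\delta>0$ the set $\{s \in (t-\delta,t) : y(s) > m\}$ is nonempty), which by the second identity above is a countable Boolean condition on the $\mf R_\ell y$ and can be imposed at all rational candidate points $t$ and upgraded to all $t$ using soft right-continuity. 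Intersecting the Borel events obtained from (a)--(d) concludes that $E^*([0,T], S_{\mf d}) \in \mc B$.
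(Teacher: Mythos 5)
Your overall strategy --- decomposing $E^*([0,T], S_{\mf d})$ via the four conditions of Lemma \ref{ea13} and exploiting the continuity of each $\mf R_\ell$ for the metric \eqref{e06} --- is the same as the paper's, and your treatment of conditions (c) and (d) matches it in substance. The gap is exactly where you locate the main difficulty, condition (a), and your proposed resolution does not work as stated. First, the condition you single out (``for every $m$ and every rational $\delta>0$ the set $\{s\in(t-\delta,t): y(s)>m\}$ is nonempty'') is also satisfied when $y(t-)$ exists and equals $\mf d$, which is perfectly admissible; to detect a \emph{pair-type} soft left-limit $\{n,\mf d\}$ you must simultaneously record infinitely many returns to the fixed finite state $n$ accumulating at $t$. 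Second, and more seriously, the bad point $t$ need not be rational, so ``imposing the condition at all rational candidate points $t$'' produces a countable union that simply misses a pair-type oscillation accumulating at an irrational $t_0$: for a rational $t<t_0$ the oscillation does not accumulate at $t$, and for a rational $t>t_0$ one may take $\delta<t-t_0$ and see no oscillation at all. Soft right-continuity gives no control over left-sided oscillation at an unspecified point and cannot ``upgrade'' rational-point information to all $t$; this step is not substantiated and I do not see how to repair it along these lines.

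The paper avoids any pointwise quantification over $t$ by using the counting functionals: condition (a) is shown to be equivalent to $\sup_{\ell\ge m}\mf N_m(\mf R_\ell x)<\infty$ for every $m$, i.e.\ $\Omega_3=\cap_{m}\cup_{k}\cap_{\ell\ge m}\{\mf N_m(\mf R_\ell x)\le k\}$, a countable Boolean combination of closed sets (by Assertion \ref{ea12} and the monotonicity of $\ell\mapsto\mf N_m(\mf R_\ell x)$ from Assertion \ref{ea08}). The point is that a pair-type soft limit with finite part $n$ forces infinitely many separated visits to $n$, which become visible one by one as $\ell$ increases, so $\mf N_n(\mf R_\ell x)\uparrow\infty$; conversely a uniform bound on $\mf N_m(\mf R_\ell x)$ makes $\{s:x(s)=m\}$ a finite union of intervals, excluding $m$ as a finite soft limit. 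A similar device (the holding times $T_{m,k}(\mf R_\ell x)$ bounded below by $1/k$, and the identity $\Omega_{13}\cap\Omega_4=\Omega_{13}\cap\Omega'_4$, which holds only after intersecting with the other three events) is needed for condition (b), which your proposal dispatches with ``a parallel case analysis'' without supplying the mechanism. As written, the proof of the hardest part of the lemma is missing.
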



\begin{proof}
To keep notation simple, denote the set $E([0,T], S_{\mf d})$ by
$E_T$.  Let 
\begin{equation*}
\begin{split}
& \Omega_1 = \{ x\in E_T : \Lambda_T(x)=0\}\;, \quad
\Omega_2 = \{ x\in E_T : x(T)=x(T-)\} \;, \\
&\quad \Omega_3 = \{x \in E_T : x \text{ has left and right-limits at
  every point }\} \;, \\
&\qquad \Omega_4 = \{x\in E_T : x(t)=x(t-) \text{ if $x(t+) =
  \mf d$ for some $t\in [0,T)$} \}\;.
\end{split}
\end{equation*}
In view of Lemma \ref{ea13}, $E^*([0,T], S_{\mf d}) = \cap_{1\le j \le
  4} \Omega_j$. To prove the lemma we have to show that the latter set
belongs to $\mc B$.

We first show that $\Omega_1$ belongs to $\mc B$. Let $F_j: E_T \to
[0,T]$, $j\ge 1$, be given by
\begin{equation*}
F_j(x) \;=\; \int_0^T \mb 1\{x(s) = j\}\, ds\;.
\end{equation*}
Since ${\mf R}_\ell x$ increases to $x$, $F_j(x) = \inf_{\ell\ge j}
F_{j,\ell}(x)$, where $F_{j,\ell}(x) = \int_{[0,T]} \mb 1\{({\mf
  R}_\ell x)(s) = j\}\, ds$. Since each function $F_{j,\ell}$ is
continuous for the soft topology, the function $F_j$ is $\mc
B$-measurable and so is $\Lambda_T = T - \sum_{j\ge 1} F_j$. This
proves that $\Omega_1$ belongs to $\mc B$.

We turn to $\Omega_2$.  We show separately that $\Omega^{2,1}_2 = \{
x\in E_T : x(T)=x(T-)=\mf d\}$ and $\Omega^{2,2}_2 =\{ x\in E_T :
x(T)=x(T-)\in S\}$ belong to $\mc B$.  By definition of $E_T$, we may
rewrite $\Omega^{2,1}_2$ as $\{ x\in E_T : x(T)=\mf d\}$. Since ${\mf
  R}_\ell x$ increases to $x$, $\Omega^{2,1}_2 = \cap_{m\ge
  1}\cup_{\ell\ge m} = \{x\in E_T : ({\mf R}_\ell x) (T) =
\ell\}$. For each $\ell\ge 1$, the set $\{x\in E_T : ({\mf R}_\ell x)
(T) = \ell\}$ is closed, which shows that $\Omega^{2,1}_2$ belongs to
$\mc B$.

On the other hand, we claim that $\Omega_2^{2,2} = \cup_{k\ge
  1}\cup_{m\ge 1}\cap_{\ell\ge m} \Omega_2^{k,m,\ell}$, where
$\Omega_2^{k,m, \ell} = \{x\in E_T : ({\mf R}_\ell x) (t) = m \,,\,
T-(1/k) \le t\le T\}$. Fix $x\in \Omega_2^{2,2}$ and set
$m=x(T)$. Since $x(T-)=m$, there exists $k\ge 1$ such that $x(t)=m$
for $T-(1/k) \le t\le T$. By definition of $\mf R_\ell$, for all $\ell
\ge m$, $({\mf R}_\ell x) (t) = m$ for $T-(1/k) \le t\le T$. Thus,
$\Omega_2^{2,2} \subset \cup_{k\ge 1}\cup_{m\ge 1}\cap_{\ell\ge m}
\Omega_2^{k,m,\ell}$. Conversely, if $x$ belongs to $\cup_{k\ge
  1}\cup_{m\ge 1}\cap_{\ell\ge m} \Omega_2^{k,m,\ell}$, there exists
$k\ge 1$ and $m\ge 1$ such that $({\mf R}_\ell x) (t) = m$ for
$T-(1/k) \le t\le T$ for all $\ell \ge m$. Since ${\mf R}_\ell x$
increases pointwisely to $x$, the same property holds for $x$, which
proves that $\Omega_2^{2,2} = \cup_{k\ge 1}\cup_{m\ge 1}\cap_{\ell\ge
  m} \Omega_2^{k,m,\ell}$. As the sets $\Omega_2^{k,m,\ell}$ are
closed, the set $\Omega_2^{2,2}$ belongs to $\mc B$.

We claim that $\Omega_3 = \cap_{m\ge 1} \cup_{k\ge 1} \cap_{\ell\ge m}
\{x \in E_T : \mf N_m ({\mf R}_\ell x) \le k\}$. Denote the right hand
side of the equality by $\Omega'_3$ and fix $x\in \Omega'_3$.  We will
show that the trajectory $x$ has left and right limits at all points
$t\in [0,T]$. Since $x$ belongs to $E([0,T], S_{\mf d})$, it is enough
to exclude the possibility that $x$ has a finite soft limit at some
point $t\in [0,T]$.  Fix $m\ge 1$ and recall Remark \ref{ea16}. As $x$
belongs to $\Omega'_3$, there exists $k \ge 1$ such that $\mf N_m(
{\mf R}_\ell x) \le k$ for all $\ell\ge m$. Since the sequence $\mf
N_m( {\mf R}_\ell x)$ increases with $\ell$, it is constant for $\ell$
large enough.  Denote by $[s^\ell_1, t^\ell_1), \dots, [s^\ell_N,
t^\ell_N)$ the $N=\mf N_m( {\mf R}_\ell x)$ time-intervals in which
${\mf R}_\ell x$ visits $m$. If $t^\ell_N=T$, the last time interval
may be closed at $T$.  By Assertion \ref{ea08}, $s^{\ell+1}_i =
s^\ell_i$, $1\le i\le N$, and $t^{\ell+1}_i \le t^\ell_i$. Since ${\mf
  R}_\ell x$ converges pointwisely to ${\mf R}_\infty x = x$, the set
$\{s\in [0,T] : x(s) = m\}$ is the union of $N$ disjoint intervals,
some of which can be reduced to a point. In particular, $m$ can not be
the finite soft limit of $x$ at some point $t\in [0,T]$. Since this
holds for every $m$, $x$ does not have a left or a right finite soft
limit at some $t\in [0,T]$.

Conversely, fix a trajectory $x$ which does not belong to
$\Omega'_3$. In this case, there exists $m\ge 1$ such that $\mf N_m
({\mf R}_\ell x)$ increases to $\infty$. By Assertion \ref{ea08}, the
set $A_m = \{t \in [0,T] : x(t)=m \not = x(t-)\}$ is countably
infinite because it contains all the left end-points of the time
intervals $[s,t)$ in which ${\mf R}_\ell x$ is constant equal to
$m$. Let $t$ be an accumulation point of $A_m$ and assume, without
loss of generality, that there exists $t_j\uparrow t$.  Then, $x(t_j)
= m$ and there exist $s_j\uparrow\infty$ such that $x(s_j) \not = m$
for all $j$. This proves that $x$ has not a left limit at $t$, proving
that $\Omega_3 = \Omega'_3$. Since the sets $\{x \in E_T : \mf N_m
({\mf R}_\ell x) \le k\}$ are closed, $\Omega_3$ belongs to $\mc B$.
 
Finally, consider the set $\Omega_4$. By definition of $E_T$,
$x(t-)=x(t)=\mf d$ if $x(t)=\mf d$. The set $\Omega_4$ may therefore
be rewritten as $\{x\in E_T : x(t-)=x(t) \text{ if } x(t+) = \mf d,
x(t)\in S \text{ for some } t\in [0,T)\}$.  Let $\Omega'_4 =
\cap_{m\ge 1} \cup_{k\ge 1} \cap_{\ell\ge m}\{x\in E_T : T_{m,j}({\mf
  R}_\ell x) \ge (1/k) \text{ for all } 1\le j\le \mf N_m ({\mf
  R}_\ell x)\}$, and $\Omega_{13}= \Omega_1 \cap \Omega_2 \cap
\Omega_3$. We claim that $\Omega_{13} \cap \Omega_4 = \Omega_{13} \cap
\Omega'_4$.

Consider a trajectory $x\in \Omega_{13} \cap \Omega'_4$, $m\ge 1$, and
recall the notation introduced when we proved that $\Omega_3$ belongs
to $\mc B$. For $\ell\ge m$, $[s^\ell_1, t^\ell_1), \dots, [s^\ell_N,
t^\ell_N)$ represent the $N=\mf N_m( {\mf R}_\ell x)$ time-intervals
in which ${\mf R}_\ell x$ visits $m$. Since $T_{m,j}({\mf R}_\ell x)
\ge (1/k)$ for $1\le j\le \mf N_m ({\mf R}_\ell x)$, $x$ is equal to
$m$ in $N$ intervals $[s_j, t_j)$ of length at least $1/k$. Since
$s^\ell_i = s^{\ell+1}_i$, taking the limit $\ell\uparrow\infty$ we
obtain that there is no $t\in [0,T)$ such that $x(t+)=\mf d$,
$x(t)=m$. Since this holds for every $m$, $x$ belongs to $\Omega_{13}
\cap \Omega_4$.

Reciprocally, consider a trajectory $x$ in $\Omega_{13} \cap
[\Omega'_4]^c$. By definition, there exist $m\ge 1$, a sequence
$\ell_k$ and an interval $[s^{\ell_k}_i, t^{\ell_k}_i)$, $1\le i \le
\mf N_m( {\mf R}_{\ell_k} x)$, such that $t^{\ell_k}_i - s^{\ell_k}_i
\le 1/k$. The number of intervals, $\mf N_m( {\mf R}_{\ell_k} x)$, is
eventually constant because $x$ belongs to $\Omega_3$. There exists,
in particular, an index $i$ such that $\lim_{k} (t^{\ell_k}_i -
s^{\ell_k}_i)=0$. Let $t=s^{\ell_k}_i$, a sequence which is constant
in view of Assertion \ref{ea08}. Since $t^{\ell_k}_i \downarrow t$,
$x(t+)$ exists and is equal to $\mf d$, $x(t) = m$ and $x(t-)\not =
m$, which proves that $x$ belongs to $\Omega_{13} \cap
[\Omega_4]^c$. This shows that $\Omega_{13} \cap \Omega_4 =
\Omega_{13} \cap \Omega'_4$. Finally, since $\{x\in E_T : T_{m,j}({\mf
  R}_\ell x) \ge (1/k) \text{ for all } 1\le j\le \mf N_m ({\mf
  R}_\ell x)\}$ is a closed set, $\Omega_{13} \cap \Omega_4$ belongs
to $\mc B$, which concludes the proof of the lemma.
\end{proof}

Pushing further the arguments used in the proof of the previous lemma,
we obtain in the next lemma sufficient conditions, all expressed only
in terms of the trajectories ${\mf R}_\ell x_n$, for the limit $x$ of a
sequence $x_n$ in $E^*([0,T], S_{\mf d})$ to belong to $E^*([0,T],
S_{\mf d})$. 

\begin{lemma}
\label{se11}
Let $\{x_n : n\ge 1\}$ be a sequence in $E^*([0,T], S_{\mf d})$ which
converges to $x\in E([0,T], S_{\mf d})$ in the metric $\mb d$. Assume
that
\begin{itemize}

\item[{\rm (a)}] 
\begin{equation*}
\lim_{m\to\infty} \sup_{\ell\ge 1} \, \sup_{n\ge 1} 
\int_0^T \mb 1\{{\mf R}_\ell x_n(s) \ge m\} \, ds \;=\; 0\;;
\end{equation*}

\item[{\rm (b)}] For each $m\ge 1$, there exists $k_m\in \bb N$ such that
$\mf N_m ({\mf R}_\ell x_n) \le k_m$ for all $\ell\ge m$ and $n\ge 1$;

\item[{\rm (c)}] For each $m\ge 1$, there exists $\epsilon_m>0$ such
  that $T_{m,k}({\mf R}_\ell x_n) \ge \epsilon_m$ for all $1\le k\le
  \mf N_m ({\mf R}_\ell x_n)$, $\ell\ge m$ and $n\ge 1$;

\end{itemize}
Then, $x$ belongs to $E^*([0,T], S_{\mf d})$.
\end{lemma}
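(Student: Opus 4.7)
By Lemma \ref{ea13} it suffices to verify the four characteristic properties of $E^*([0,T], S_{\mf d})$: that $x$ has left and right-limits at every point; that $x(t+)=\mf d$ implies $x(t)=x(t-)$ for $t\in[0,T)$; that $x(T)=x(T-)$; and that $\Lambda_T(x)=0$. The whole strategy rests on the defining feature of the metric $\mb d$: convergence $x_n\to x$ forces $\mf R_\ell x_n\to \mf R_\ell x$ in the Skorohod topology on $D([0,T], S_\ell)$ for every $\ell\ge 1$. I will therefore transfer each hypothesis (a), (b), (c), a uniform statement about the approximations $\mf R_\ell x_n$, to $\mf R_\ell x$, and then translate it into a structural property of $x$ via the pointwise identity $\mf R_\ell x\nearrow x$ provided by Assertions \ref{ea10} and \ref{ea02}.

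I would first treat $\Lambda_T(x)=0$. For each fixed $\ell\ge m$ the functional $y\mapsto \int_0^T \mb 1\{y(s)\ge m\}\, ds$ is continuous on $D([0,T], S_\ell)$: if $y_n=y\circ\lambda_n$ for $\lambda_n$ close to the identity (which happens eventually once $d_S(y_n,y)<[\ell(\ell-1)]^{-1}$), the two integrals differ by at most a multiple of $\Vert\lambda_n\Vert^o$. Passing to the limit $n\to\infty$ in (a) therefore yields $\int_0^T \mb 1\{(\mf R_\ell x)(s)\ge m\}\, ds\le \rho_m$ with $\rho_m\to 0$ uniformly in $\ell$. Since $\mf R_\ell x\le \mf R_{\ell+1} x$, the integrand is non-decreasing in $\ell\ge m$ and converges pointwise to $\mb 1\{x(s)\ge m\text{ or }x(s)=\mf d\}$; monotone convergence followed by $m\uparrow\infty$ gives $\Lambda_T(x)=0$.

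Next, for the existence of left and right-limits, Assertion \ref{ea12} provides continuity of $\mf N_m$ on $D([0,T], S_\ell)$ for $\ell\ge m$, so hypothesis (b) promotes to $\mf N_m(\mf R_\ell x)\le k_m$ for every $\ell\ge m$; the argument used for the set $\Omega_3$ in the proof of Lemma \ref{se16} then rules out any finite soft left- or right-limit $m\in S$ of $x$, because via Assertion \ref{ea08} such a point would force $\mf N_m(\mf R_\ell x)\to\infty$. For the two remaining continuity conditions, the closedness of $\{y : T_{m,k}(y)\ge \epsilon_m\}$ in $D([0,T], S_\ell)$ (Assertion \ref{ea12}), combined with the fact that $\mf N_m(\mf R_\ell x_n)$ is eventually constant equal to $\mf N_m(\mf R_\ell x)$, lets (c) pass to the limit: every visit to $m$ by $\mf R_\ell x$ has duration $\ge \epsilon_m$. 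Remark \ref{ea16} identifies $\{s : x(s)=m\}$ with the intersection over $\ell\ge m$ of the nested, left-endpoint-preserving unions of the visit intervals of $\mf R_\ell x$ to $m$, and the uniform bounds reduce this to a disjoint union of at most $k_m$ intervals of length $\ge\epsilon_m$ on which $x\equiv m$. Consequently, if $x(t)=m\in S$ with $t<T$ then $x$ is constantly $m$ on a right-neighborhood of $t$, excluding $x(t+)=\mf d$, and if $x(T)=m\in S$ then $x\equiv m$ on $[T-\epsilon_m, T]$, giving $x(T-)=x(T)$. The complementary case where $x(t)$ or $x(T)$ equals $\mf d$ is handled directly by the defining condition of $E([0,T], S_{\mf d})$, which forces $x(t-)=\mf d=x(t)$.

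The main obstacle is this last step, where one must convert the Skorohod-level lower bound $T_{m,k}(\mf R_\ell x)\ge \epsilon_m$ into a pointwise description of the value-$m$ level set of $x$. Remark \ref{ea16} does most of the bookkeeping by tracking how the visit intervals of $\mf R_\ell x$ to $m$ shrink and how their left endpoints persist as $\ell$ grows; once that remark is invoked the remaining inequalities are elementary, and the hypotheses (a), (b), (c) align exactly with the three quantitative controls (negligible time in high states, bounded number of visits, positive visit durations) that this passage to the limit requires.
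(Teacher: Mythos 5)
Your overall strategy is the paper's own: reduce everything to the four conditions of Lemma \ref{ea13}, transfer hypotheses (a)--(c) from ${\mf R}_\ell x_n$ to ${\mf R}_\ell x$ via the Skorohod convergence ${\mf R}_\ell x_n\to{\mf R}_\ell x$ together with Assertion \ref{ea12}, and then read off the structure of the level sets $\{s: x(s)=m\}$ from the nested visit intervals of ${\mf R}_\ell x$ (Assertion \ref{ea08}, Remark \ref{ea16}) using the monotone pointwise convergence ${\mf R}_\ell x\nearrow x$. The treatment of $\Lambda_T(x)=0$ and of the finite soft limits matches the paper up to cosmetic choices (continuity of the occupation functional plus monotone convergence, versus Fatou plus dominated convergence). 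Your handling of the case $x(T)=\mf d$ directly from the defining property of $E([0,T],S_{\mf d})$ is in fact a legitimate shortcut relative to the paper, which instead goes through Assertion \ref{ea14} and the constancy intervals of ${\mf R}_\ell x$ near $T$.

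There is, however, one incorrect step. You claim that if $x(t)=m\in S$ with $t<T$ then $x$ is constantly equal to $m$ on a right-neighborhood of $t$, and you use this to \emph{exclude} $x(t+)=\mf d$ when $x(t)\in S$. That claim is false: the level set $\{s: x(s)=m\}$ is a finite union of intervals of length at least $\epsilon_m$ which are closed at the left but may be \emph{closed at the right as well} (this happens precisely when the right endpoints $t^\ell_i$ decrease strictly to their limit), and if $t$ is such a closed right endpoint then $x(s)\neq m$ for $s>t$ near $t$ and one can perfectly well have $x(t+)=\mf d$. Indeed, condition (b) of Lemma \ref{ea13} is stated exactly because this configuration occurs for trajectories in $E^*([0,T],S_{\mf d})$. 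The correct deduction — and the one the paper makes — is the following: if $x(t+)=\mf d$ and $x(t)=m\in S$, then $t$ cannot be the left endpoint $s_i$ of its visit interval (otherwise $x\equiv m$ on $[t,t+\epsilon_m)$ and $x(t+)=m$), so $t$ lies strictly to the right of $s_i$ in a non-degenerate interval on which $x\equiv m$, whence $x(t-)=m=x(t)$, which is precisely what condition (b) of Lemma \ref{ea13} requires. The interval structure you already set up delivers this immediately, so the error is localized and repairable, but as written your verification of condition (b) proves the wrong statement.
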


\begin{proof}
We need to prove that the trajectory $x$ fulfills conditions (a)--(d)
of Lemma \ref{ea13}.  We first claim that $\Lambda_T(x)=0$. Fix
$\epsilon >0$. By assumption (a), there exists $m\ge 1$ such that
\begin{equation*}
\int_0^T \mb 1\{ ({\mf R}_\ell x_n) (s) \ge m\}\, ds \;\le\; \epsilon
\end{equation*}
for all $n\ge 1$, $\ell\ge 1$. Fix $\ell\ge m$. The sequence ${\mf
  R}_\ell x_n$ converges almost everywhere to ${\mf R}_\ell x$ because
it converges in the Skorohod topology. Hence, since $\mf R_\ell x_n$
takes values in a discrete set, by Fatou's lemma,
\begin{equation*}
\int_0^T \mb 1\{ ({\mf R}_\ell x) (s) \ge m\}\, ds 
\; \le\; \liminf_{n\to\infty} \int_0^T 
\mb 1\{ ({\mf R}_\ell x_n) (s) \ge m\}\, ds \;\le\; \epsilon\;.
\end{equation*}
Since ${\mf R}_\ell x$ converges pointwisely to $x$, by the dominated
convergence theorem,
\begin{equation*}
\int_0^T \mb 1\{ x (s) \ge m\}\, ds \; \le\; \epsilon\;,
\end{equation*}
so that $\Lambda_T(x) \le \epsilon$.

We now show that $x$ has left and right limits.  Since $x$ belongs to
$E([0,T], S_{\mf d})$ to prove this claim it is enough to exclude the
possibility that $x$ has a finite soft limit at some point $t\in
[0,T]$.  Fix $m\ge 1$. By assumptions (b) and (c) of this lemma, there
exist $k_m\ge 1$ and $\epsilon_m>0$ such that $\mf N_m( {\mf R}_\ell
x_n) \le k_m$ and $T_{m,k}({\mf R}_\ell x_n) \ge \epsilon_m$ for all
$1\le k\le \mf N_m ({\mf R}_\ell x_n)$, $\ell\ge m$, $n\ge 1$. Since
${\mf R}_\ell x_n$ converges in the Skorohod topology to ${\mf R}_\ell
x$, by Assertion \ref{ea12}, $\mf N_m({\mf R}_\ell x) \le k_m$ and
$T_{m,k}({\mf R}_\ell x) \ge \epsilon_m$ for all $1\le k\le \mf N_m
({\mf R}_\ell x)$, $\ell\ge m$. As the sequence $\mf N_m( {\mf R}_\ell
x)$ increases with $\ell$, it is constant for $\ell$ large enough.
Denote by $[s^\ell_1, t^\ell_1), \dots, [s^\ell_N, t^\ell_N)$ the
$N=\mf N_m( {\mf R}_\ell x)$ time-intervals in which ${\mf R}_\ell x$
visits $m$. Since $T_{m,k}({\mf R}_\ell x) \ge \epsilon_m$ for all
$k$, $t^\ell_i \ge s^\ell_i + \epsilon_m$. By Assertion \ref{ea08},
$s^{\ell+1}_i = s^\ell_i$, $1\le i\le N$, and $t^{\ell+1}_i \le
t^\ell_i$. Since ${\mf R}_\ell x$ converges pointwisely to ${\mf
  R}_\infty x = x$, the set $\{s\in [0,T] : x(s) = m\}$ is the union
of $N$ disjoint intervals of length greater or equal to $\epsilon_m$,
which are closed at the left boundary and open or closed at the right
boundary.  In particular, $m$ can not be the finite soft limit of $x$
at some point $t\in [0,T]$. Since this holds for every $m$, $x$ does
not have a left or a right finite soft limit at some time $t\in
[0,T]$. This proves condition (a) of Lemma \ref{ea13}.

We turn to condition (b) of Lemma \ref{ea13}. Suppose that
$x(t+)=\mf d$ for some $t\in [0,T)$. If $x(t)=\mf d$, since $x\in
E([0,T], S_{\mf d})$ and $\Lambda_T(x)=0$, $\sigma_\infty(t)=t$ and,
by definition of the set $E([0,T], S_{\mf d})$, $x(t-)=x(t)$. If
$x(t)=m\in S$, since $x(t+)=\mf d$, $t$ is the right endpoint of an
interval $[s_i,t_i]$ obtained as the limit of the intervals
$[s^\ell_i,t^\ell_i)$ introduced in the previous paragraph. Since the
interval is not degenerate, $x(t-)=m=x(t)$, which proves condition (b)
of Lemma \ref{ea13}.

We finally prove condition (c) of Lemma \ref{ea13}. Suppose that
$x(T)=k\in S$. In this case, since the set $\{s\in [0,T] : x(s) = k\}$
is the union of a finite number of disjoint intervals of positive
length, $x$ is continuous at $T$. Suppose now that $x(T)=\mf d$.  By
Assertion \ref{ea14}, $({\mf R}_\ell x_n)(T) = ({\mf R}_\ell x_n)(T-)$
for all $\ell\ge 1$, $n\ge 1$. Since ${\mf R}_\ell x_n$ converges to
${\mf R}_\ell x$ in the Skorohod topology, the continuity at $T$ is
inherited by ${\mf R}_\ell x$.  Denote by $[a_\ell, T]$ the constancy
interval of ${\mf R}_\ell x$ and fix $m\ge 1$. Since $x(T)=\mf d$ and
since $({\mf R}_\ell x)(T)$ converges to $x(T)$, there exists
$\ell_0\ge 1$ such that for all $\ell\ge\ell_0$, $({\mf R}_\ell x)(T)
\ge m$. By definition of $a_\ell$ and since $x\ge {\mf R}_\ell x$, for
all $a_\ell\le t\le T$, $x(t) \ge ({\mf R}_\ell x)(t) = ({\mf R}_\ell
x)(T) \ge m$. This proves that $x(T-)=\mf d =x(T)$.  Condition (c) of
Lemma \ref{ea13} is therefore in force, which concludes the proof of
the lemma.
\end{proof}

\begin{corollary}
\label{se09}
Let $x$ be a trajectory in $E([0,T], S_{\mf d})$ which satisfies
conditions {\rm (b)} and {\rm (c)} of the previous lemma and such that
$\Lambda_T(x)=0$, $(\mf R_\ell x)(T) = (\mf R_\ell x)(T-)$ for all
$\ell\ge 1$. Then, $x$ belongs to $E^*([0,T], S_{\mf d})$.
\end{corollary}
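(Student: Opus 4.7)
The plan is to show that $x$ satisfies the four conditions (a)--(d) of Lemma \ref{ea13}. Condition (d) is exactly the hypothesis $\Lambda_T(x) = 0$. For the other three, I would essentially re-run the proof of Lemma \ref{se11} applied to the constant sequence $x_n = x$; in that argument the hypothesis $x_n \in E^*([0,T], S_{\mf d})$ enters only once, through Assertion \ref{ea14}, to ensure continuity at $T$ of $\mf R_\ell x_n$, and this is now replaced by the direct assumption $(\mf R_\ell x)(T) = (\mf R_\ell x)(T-)$ for every $\ell \ge 1$.

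For Lemma \ref{ea13}(a), the existence of left and right limits at every point, I would use hypotheses (b) and (c) of Lemma \ref{se11}. For each fixed $m \ge 1$, $\mf N_m(\mf R_\ell x)$ is non-decreasing in $\ell$ and bounded by $k_m$, hence eventually constant; by Assertion \ref{ea08} the left endpoints of the constancy intervals on which $\mf R_\ell x$ equals $m$ stabilize, and each such interval has length at least $\epsilon_m$. Passing to the pointwise limit $\mf R_\ell x \nearrow x$, the set $\{s \in [0,T] : x(s) = m\}$ becomes a finite union of intervals of length at least $\epsilon_m$, which rules out $m$ as a finite soft left- or right-limit of $x$ at any point. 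Since $m$ is arbitrary, the soft limits guaranteed by $x \in E([0,T], S_{\mf d})$ are then genuine limits.

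For Lemma \ref{ea13}(b), assume $x(t+) = \mf d$ for some $t \in [0,T)$. If $x(t) = \mf d$, the hypothesis $\Lambda_T(x) = 0$ together with the definition of $E([0,T], S_{\mf d})$ gives $\sigma_\infty(t) = t$, whence $x(t-) = x(t)$. If instead $x(t) = m \in S$, the description in the previous paragraph forces $t$ to be the right endpoint of a closed constancy interval of $x$ at the value $m$ of length at least $\epsilon_m$, so $x(t-) = m = x(t)$. For Lemma \ref{ea13}(c), the case $x(T) \in S$ is immediate from the same description. In the case $x(T) = \mf d$ I would let $[a_\ell, T]$ denote the right-most constancy interval of $\mf R_\ell x$, which is non-degenerate thanks to the hypothesis $(\mf R_\ell x)(T) = (\mf R_\ell x)(T-)$, fix $m \ge 1$, and use that $(\mf R_\ell x)(T) \nearrow x(T) = \mf d$ to pick $\ell$ with $(\mf R_\ell x)(T) \ge m$; since $x \ge \mf R_\ell x$, we obtain $x \ge m$ on $[a_\ell, T]$, whence $x(T-) = \mf d$.

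The whole argument is a streamlined rerun of the proof of Lemma \ref{se11}, so I do not expect a serious obstacle. Every appeal to the Skorohod convergence $\mf R_\ell x_n \to \mf R_\ell x$ in that proof (notably via Assertion \ref{ea12}) becomes vacuous here because $\mf R_\ell x_n = \mf R_\ell x$, and the single use of $x_n \in E^*([0,T], S_{\mf d})$ is absorbed into the hypothesis on $(\mf R_\ell x)(T)$. The one step needing mild care is the case $x(T) = \mf d$ in (c), where one must argue, as sketched, that the constancy intervals of $\mf R_\ell x$ at $T$ are non-degenerate before passing to the pointwise limit.
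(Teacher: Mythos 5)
Your proposal is correct and matches the paper's own argument, which simply states that conditions (a)--(c) of Lemma \ref{ea13} follow from the proof of Lemma \ref{se11} while condition (d) holds by hypothesis. You have accurately identified that the only place the assumption $x_n\in E^*([0,T],S_{\mf d})$ enters that proof is through Assertion \ref{ea14} (continuity of $\mf R_\ell x_n$ at $T$), which the corollary's hypothesis $(\mf R_\ell x)(T)=(\mf R_\ell x)(T-)$ replaces directly.
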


\begin{proof}
By the proof of Lemma \ref{se11}, $x$ satisfies conditions (a)--(c)
of Lemma \ref{ea13}. Since condition (d) of this assertion holds
by assumption, the corollary is proved. 
\end{proof}

\section{Weak Convergence of Probability Measures.}
\label{esec3}

We examine in this section the weak convergence of probability
measures on $E([0,T], S_{\mf d})$.  Fix $m\ge 1$ and consider a
sequence $x_n$ in $D([0,T], S_{m})$ converging to $x$ in the Skorohod
topology. Then, $x_n$ converges to $x$ in $E([0,T], S_{\mf
  d})$. Indeed,
\begin{equation*}
\begin{split}    
\mb d(x_n,x) \; &=\; \sum_{\ell\ge 1} \frac 1{2^\ell} \, d_S({\mf R}_\ell
x_n, {\mf R}_\ell x) \\
\;& =\; \frac 1{2^{m}} \, d_S(x_n, x) \;+\; \sum_{\ell= 1}^{m} 
\frac 1{2^\ell} \, d_S({\mf R}_\ell x_n, {\mf R}_\ell x) \;.
\end{split}
\end{equation*}
By hypothesis and by Lemma \ref{ea06}, this sum vanishes as
$n\uparrow\infty$.

Let $F: E([0,T], S_{\mf d}) \to \bb R$ be a continuous function for
the soft topology. Then, its restriction to $D([0,T], S_m)$, $m\ge 1$,
is continuous for the Skorohod topology. Indeed, consider a sequence
$x_n$ converging in $D([0,T], S_m)$ to $x$. By the previous paragraph,
$x_n$ converges to $x$ in the soft topology of $E([0,T], S_{\mf
  d})$. Since $F$ is continuous in this topology, $F(x_n)$ converges
to $F(x)$. 

\begin{theorem}
\label{se14}
A sequence of probability measures $P_n$ on $E([0,T], S_{\mf d})$
converges weakly in the soft topology to a measure $P$ if and only if
for each $m\ge 1$ the sequence of probability measures $P_n \circ {\mf
  R}^{-1}_m$ defined on $D([0,T], S_{m})$ converges weakly to $P \circ
{\mf R}^{-1}_m$ with respect to the Skorohod topology.
\end{theorem}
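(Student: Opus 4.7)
The plan is to prove both implications separately. The forward direction is straightforward from the continuity of each projection $\mf R_m$, while the converse follows a Prokhorov-type argument: establish tightness of $\{P_n\}$ on $E([0,T], S_{\mf d})$, then identify every subsequential limit with $P$ via the projections.

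For the forward direction, the definition $\mb d (x,y) = \sum_{\ell \ge 1} 2^{-\ell} d_S(\mf R_\ell x, \mf R_\ell y)$ immediately implies that $\mb d(x_n, x) \to 0$ forces $d_S(\mf R_m x_n, \mf R_m x) \to 0$ for every $m$, so $\mf R_m : E([0,T], S_{\mf d}) \to D([0,T], S_m)$ is continuous. Hence, for any bounded Skorohod-continuous $G: D([0,T], S_m) \to \bb R$, the composition $G \circ \mf R_m$ is bounded and continuous in the soft topology, and testing $P_n \to P$ against $G \circ \mf R_m$ yields the desired weak convergence of the marginals.

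For the converse, I would first prove tightness of $\{P_n\}$. Given $\epsilon > 0$, the convergence $P_n \circ \mf R_m^{-1} \to P \circ \mf R_m^{-1}$ in the Polish space $D([0,T], S_m)$ implies, by Prokhorov's theorem, tightness of the marginals, so one can select compact $K_m \subset D([0,T], S_m)$ with $P_n(\mf R_m^{-1}(K_m^c)) < \epsilon/2^m$ uniformly in $n$. Set $K = \bigcap_{m \ge 1} \mf R_m^{-1}(K_m)$. By Assertion \ref{ea15} and continuity of $\mf R_m$, the set $K$ is closed, and $P_n(K) \ge 1-\epsilon$ follows by subadditivity. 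The critical point is compactness of $K$: given a sequence in $K$, a diagonal extraction produces a subsequence along which $\mf R_m$ converges in the Skorohod topology to some $y_m \in K_m$ for every $m$; by Lemma \ref{ea06} applied to the identity $\mf R_m \circ \mf R_{m+1} = \mf R_m$, the limits satisfy $\mf R_m y_{m+1} = y_m$; Lemma \ref{se05} then assembles a limit $y \in E([0,T], S_{\mf d})$ with $\mf R_m y = y_m$ for all $m$; and dominated convergence in the series defining $\mb d$ gives $\mb d(x_n, y) \to 0$.

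With tightness in hand, I would consider any weakly convergent subsequence $P_{n_k} \to Q$. By the forward direction already proved, $Q \circ \mf R_m^{-1} = P \circ \mf R_m^{-1}$ for every $m$. The family $\{\mf R_m^{-1}(B) : m \ge 1,\ B \text{ Borel in } D([0,T], S_m)\}$ is a $\pi$-system (using $\mf R_\ell = \mf R_\ell \circ \mf R_m$ for $\ell \le m$) and generates the Borel $\sigma$-algebra $\mc B$: indeed, $E([0,T], S_{\mf d})$ is separable (Proposition \ref{se06}), the $\mf R_m$'s separate points (Assertion \ref{ea03}), and every open ball $\{y : \mb d(x_0,y) < r\}$ lies in $\sigma(\mf R_\ell : \ell \ge 1)$ by inspection of the defining series for $\mb d$. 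Hence $Q = P$ on $\mc B$, and independence of the subsequential limit yields $P_n \to P$ in the soft topology. The main obstacle is the compactness step for $K$, which is where Lemma \ref{se05} does the essential work of gluing the Skorohod limits $y_m$ into a single trajectory in $E([0,T], S_{\mf d})$.
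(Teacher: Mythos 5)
Your proposal is correct, but it takes a genuinely different route from the paper. You prove the converse by the generic projective-limit scheme: uniform tightness of $\{P_n\}$ via pulled-back compacts $K=\bigcap_m \mf R_m^{-1}(K_m)$ (with Lemma \ref{ea06} giving the consistency $\mf R_m y_{m+1}=y_m$ of subsequential Skorohod limits and Lemma \ref{se05} gluing them into a single trajectory of $E([0,T],S_{\mf d})$), followed by identification of every subsequential limit through the $\pi$-system $\{\mf R_m^{-1}(B)\}$, which generates $\mc B$ by separability. The paper instead avoids tightness entirely: it tests against a bounded \emph{uniformly} continuous $F$ and exploits the quantitative bound $\mb d(x,\mf R_m x)\le 2^{-(m-1)}$, valid \emph{uniformly in} $x$ because the terms of index $\ell\le m$ in the series defining $\mb d$ vanish ($\mf R_\ell\circ\mf R_m=\mf R_\ell$) and the tail is at most $2^{-m}$; this lets one replace $F(x)$ by $F(\mf R_m x)$ up to $\epsilon$ under every $P_n$ and under $P$ simultaneously, and then invoke the hypothesis for the fixed level $m$. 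The paper's argument is shorter and uses nothing beyond the definition of $\mb d$ and the continuity of $F$ restricted to $D([0,T],S_m)$; your argument is heavier (Prokhorov in both directions, Dynkin's lemma, and a compactness construction that essentially re-runs the gluing used in the completeness proof of Proposition \ref{se06}) but is more robust — it would survive in a setting where the approximation $\mf R_m x\to x$ holds only pointwise rather than uniformly in $x$ — and it yields as a by-product a useful compactness criterion in $E([0,T],S_{\mf d})$. Two cosmetic remarks: closedness of $K$ needs only continuity of the maps $\mf R_m$, not Assertion \ref{ea15}; and once you show every open ball belongs to $\sigma(\mf R_\ell:\ell\ge1)$, the appeal to Assertion \ref{ea03} on separation of points is redundant.
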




\begin{proof}
Suppose that the sequence $P_n$ converges weakly to $P$ and fix $m\ge
1$. Since ${\mf R}_m : E([0,T], S_{\mf d}) \to D([0,T], S_{m})$ is
continuous for the soft topology, $P_n \circ {\mf R}^{-1}_m$ converges
weakly to $P \circ {\mf R}^{-1}_m$.

Conversely, suppose that $P_n \circ {\mf R}^{-1}_m$ converges weakly
to $P \circ {\mf R}^{-1}_m$ for every $m\ge 1$. Fix a bounded, uniformly
continuous function $F: E([0,T], S_{\mf d}) \to \bb R$ and $\epsilon
>0$. Since $F$ is uniformly continuous, there exists $\delta>0$ such
that $|F(y)-F(x)|\le \epsilon$ if $\mb d(x,y)\le \delta$. Let $m\ge 1$
such that $2^{-(m-1)}< \delta$. Since $\mb d(x,{\mf R}_m x)\le
2^{-(m-1)}< \delta$, the difference $E_{P_n}[F(x)] - E_{P_n}[F({\mf
  R}_m x)]$ is absolutely bounded by $\epsilon$, uniformly in $n$. A
similar estimate holds for $P$ replacing $P_n$.

We have shown right before the statement of the theorem that $F:
D([0,T], S_{m}) \to \bb R$ is continuous for the Skorohod topology. As
$P_n \circ {\mf R}^{-1}_m$ converges weakly to $P \circ {\mf
  R}^{-1}_m$ in the Skorohod topology, and since $F$ is bounded and
continuous, there exists $n_0$ such that for all $n\ge n_0$,
$|E_{P_n}[F({\mf R}_m x)] - E_{P}[F({\mf R}_m x)]|\le \epsilon$.
Putting together the previous estimates we conclude that for all $n\ge
n_0$,
\begin{equation*}
\big| \, E_{P_n}[F(x)] - E_{P}[F(x)]\, \big| \;\le\; 3\, \epsilon\;,
\end{equation*}
which concludes the proof of the theorem. 
\end{proof}

\begin{theorem}
\label{se12}
Let $\{P_n : n\ge 1\}$ be a sequence of probability measures on the
path space $E^*([0,T], S_{\mf d})$ which converges weakly to a measure
$P$ in $E([0,T], S_{\mf d})$ endowed with the soft topology. Assume
that
\begin{itemize}

\item[{\rm (a)}] 
\begin{equation*}
\lim_{m\to\infty} \limsup_{\ell\to\infty} \limsup_{n\to\infty} E_{P_n}
\Big[ \int_0^T \mb 1\{ ({\mf R}_\ell x)(s) \ge m\} \, ds \Big]
\;=\; 0\;;
\end{equation*}

\item[{\rm (b)}] For each $m\ge 1$, 
\begin{equation*}
\lim_{k\to\infty}\limsup_{\ell\to\infty}\limsup_{n\to\infty} 
P_n \big[ \, \mf N_m ({\mf R}_\ell x) \ge k \,\big] \;=\; 0\;;
\end{equation*}

\item[{\rm (c)}] For each $m\ge 1$, 
\begin{equation*}
\lim_{\epsilon\to 0}\limsup_{\ell\to\infty}\limsup_{n\to\infty} 
P_n \Big[ \, \bigcup_{k=1}^{\mf N_m ({\mf R}_\ell x)} 
\{T_{m,k}({\mf R}_\ell x) < \epsilon \} \,\Big] \;=\; 0\;.
\end{equation*}

\end{itemize}
Then, $P$ is concentrated on $E^*([0,T], S_{\mf d})$.
\end{theorem}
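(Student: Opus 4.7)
The plan is to show that $P$-almost every trajectory $x$ satisfies the hypotheses of Corollary \ref{se09}, which will then yield $P(E^*([0,T], S_{\mf d})) = 1$. These hypotheses are: (i) $\Lambda_T(x) = 0$; (ii) for each $m\ge 1$ there exists $k_m\in \bb N$ with $\mf N_m(\mf R_\ell x) \le k_m$ for all $\ell \ge m$; (iii) for each $m\ge 1$ there exists $\epsilon_m>0$ with $T_{m,k}(\mf R_\ell x) \ge \epsilon_m$ for all $1\le k\le \mf N_m(\mf R_\ell x)$ and all $\ell \ge m$; and (iv) $(\mf R_\ell x)(T) = (\mf R_\ell x)(T-)$ for every $\ell \ge 1$. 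Since each involves only countably many events, I handle them separately.

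Condition (iv) comes for free from closed-set Portmanteau: the set $F_\ell := \{x : (\mf R_\ell x)(T) = (\mf R_\ell x)(T-)\}$ is closed in the soft topology, because $\mf R_\ell : E([0,T], S_{\mf d}) \to D([0,T], S_\ell)$ is continuous and ``no jump at $T$'' is a closed property in the Skorohod space on the finite state space $S_\ell$ (Skorohod convergence being eventually $y_n = y\circ \lambda_n$ with $\lambda_n(T) = T$). Since every trajectory in $E^*$ lies in $F_\ell$ by Assertion \ref{ea14}, $P_n(F_\ell) = 1$, and therefore $P(F_\ell) = 1$.

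Conditions (ii) and (iii) rest on the $\ell$-monotonicity of Remark \ref{ea16}: $\mf N_m(\mf R_\ell x)$ is non-decreasing in $\ell$, while the minimum of the holding times $T_{m,k}(\mf R_\ell x)$ is non-increasing in $\ell$ (left endpoints of the constancy intervals at level $m$ are preserved while right endpoints only shrink). With $B^m_{\ell,k} := \{\mf N_m(\mf R_\ell x) \le k\}$, Assertion \ref{ea12} shows this set is clopen, and by monotonicity $B^m_k := \bigcap_{\ell\ge m} B^m_{\ell,k}$ is a decreasing intersection of closed sets whose complement is an increasing union of open sets. Portmanteau applied to each open $B^{m,c}_{\ell,k}$, combined with hypothesis (b), gives
\begin{equation*}
P(B^{m,c}_k) \;=\; \lim_\ell P(B^{m,c}_{\ell,k}) \;\le\; \limsup_\ell \limsup_n P_n\big[\, \mf N_m(\mf R_\ell x) \ge k+1 \,\big]\;,
\end{equation*}
which vanishes as $k\to\infty$, proving (ii). A parallel argument with the closed sets $C^m_{\ell,\epsilon} := \{x : T_{m,k}(\mf R_\ell x) \ge \epsilon \text{ for all } 1\le k\le \mf N_m(\mf R_\ell x)\}$ (closed by Assertion \ref{ea12}, decreasing in $\ell$) and hypothesis (c) yields (iii).

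For (i), the functional $G_{\ell,m}(x) := \int_0^T \mb 1\{(\mf R_\ell x)(s) \ge m\}\, ds$ is continuous and bounded on $E([0,T], S_{\mf d})$ for the soft topology: continuity of $\mf R_\ell$ reduces the question to continuity of $y \mapsto \int_0^T \mb 1\{y(s) \ge m\}\, ds$ on the finite-state Skorohod space $D([0,T], S_\ell)$, which is elementary via the change of variables $y_n = y\circ \lambda_n$. Weak convergence therefore gives $E_P[G_{\ell,m}] = \lim_n E_{P_n}[G_{\ell,m}]$. Since $\mf R_\ell x \uparrow x$ pointwise, monotone convergence in $\ell$ followed by bounded convergence in $m$ (using $\mb 1\{x(s) \ge m\} \downarrow \mb 1\{x(s) = \mf d\}$) yields $E_P[\Lambda_T(x)] = \lim_m \lim_\ell E_P[G_{\ell,m}]$, bounded by $\lim_m \limsup_\ell \limsup_n E_{P_n}[G_{\ell,m}]$, which is $0$ by hypothesis (a); hence $\Lambda_T = 0$ $P$-a.s. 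The main obstacle lies in keeping the $(m,\ell,n)$ interchanges coherent and in verifying the closed/open character of each set; both rely on the finiteness of $S_\ell$ and on Remark \ref{ea16}.
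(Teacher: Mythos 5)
Your proposal is correct and follows essentially the same route as the paper: reduce to Corollary \ref{se09}, establish $\Lambda_T=0$ a.s.\ via weak convergence of the continuous functionals $x\mapsto\int_0^T\mb 1\{(\mf R_\ell x)(s)\ge m\}\,ds$ together with monotone convergence in $\ell$ and $m$, transfer conditions (b) and (c) to $P$ by portmanteau using the continuity/closedness statements of Assertion \ref{ea12} and the monotonicity in $\ell$ of $\mf N_m(\mf R_\ell x)$ and of the holding times, and handle continuity at $T$ via Assertion \ref{ea14} and closedness of $\{y:y(T)=y(T-)\}$. The only cosmetic difference is that you apply the portmanteau arguments directly on $E([0,T],S_{\mf d})$ where the paper routes them through the pushforwards $P_n\circ\mf R_\ell^{-1}$ on $D([0,T],S_\ell)$, which amounts to the same thing.
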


\begin{proof}
It is not difficult to show that for each $m\le \ell$ the map $y \to
\int_0^T \mb 1\{ y (s) \ge m\} \, ds$ is continuous in $D([0,T],
S_{\ell})$. Therefore, the map $y \to \int_0^T \mb 1\{ ({\mf R}_\ell
y) (s) \ge m\} \, ds$ is bounded and continuous in $E([0,T], S_{\mf
  d})$. By assumption (a), given $\epsilon>0$, there exists $m_0$ such
that for all $m\ge m_0$,
\begin{equation*}
\limsup_{\ell\to\infty}  E_{P}
\Big[ \int_0^T \mb 1\{ ({\mf R}_\ell x)(s) \ge m\} \, ds \Big]
\;\le \; \epsilon \;. 
\end{equation*}
Since ${\mf R}_\ell x$ increases pointwisely to ${\mf R}_\infty x =
x$, by the monotone convergence theorem,
\begin{equation*}
E_P \big[ \Lambda_T(x) \big] \;\le\;
E_{P} \Big[ \int_0^T \mb 1\{ x (s) \ge m\} \, ds \Big]
\;\le \; \epsilon \;. 
\end{equation*}
Letting $\epsilon\downarrow 0$, we conclude that $E_P [\Lambda_T(x)] =
0$, i.e., that
\begin{equation}
\label{e19}
P [\Lambda_T(x)=0] \;=\; 1\;.
\end{equation}

By Assertion \ref{ea12}, the functionals $\mf N_m$, $m\ge 1$, are
continuous for the Skorohod topology. The sets $\{x \in D([0,T],
S_\ell) : \mf N_m(x) \ge k\} = \{x \in D([0,T], S_\ell) : \mf N_m(x)
\le k-1\}^c$ are therefore open and, by assumption (b), for every
$m\ge 1$,
\begin{equation}
\label{e14}
\lim_{k\to\infty}\limsup_{\ell\to\infty}
P \big[ \, \mf N_m ({\mf R}_\ell x) \ge k \,\big] \;=\; 0\;.
\end{equation} 
As $\mf N_m ({\mf R}_\ell x)$ is a non-decreasing sequence in $\ell$,
the set $\{\mf N_m ({\mf R}_\ell x) \ge k\}$ is contained in $\{\mf
N_m ({\mf R}_{\ell+1} x) \ge k\}$. Thus, for every $m\ge 1$,
\begin{equation*}
P\Big[ \, \bigcap_{k\ge 1} \bigcup_{\ell\ge m} 
\{\mf N_m ({\mf R}_\ell x) \ge k\} \, \Big]
\;=\; \lim_{k\to\infty} \lim_{\ell \to\infty} 
P\Big[ \, \mf N_m ({\mf R}_\ell x) \ge k \, \Big] \;=\; 0 \;,
\end{equation*}
where the last equality follows from \eqref{e14}. Since this identity
holds for every $m\ge 1$,
\begin{equation}
\label{e20}
P\Big[ \, \bigcap_{m\ge 1} \bigcup_{k\ge 1} \bigcap_{\ell\ge m} 
\{\mf N_m ({\mf R}_\ell x) \le k\} \, \Big] \;=\; 1\;.
\end{equation}

A straightforward modification of the proof of Assertion \ref{ea12}
shows that for every $\ell\ge m$, the set $\bigcap_{k=1}^{\mf N_m (y)}
\{T_{m,k}(y) \ge \epsilon \}$ is closed in $D([0,T],
S_{\ell})$. Therefore, by assumption (c),
\begin{equation*}
\lim_{\epsilon\to 0}\limsup_{\ell\to\infty}
P \Big[ \, \bigcup_{k=1}^{\mf N_m ({\mf R}_\ell x)} 
\{T_{m,k}({\mf R}_\ell x) < \epsilon \} \,\Big] \;=\; 0\;.
\end{equation*}
Since the duration of the visits to a point $m$ may only decrease as
$\ell$ increases, $\bigcup_{k=1}^{\mf N_m ({\mf R}_\ell x)}
\{T_{m,k}({\mf R}_\ell x) < \epsilon \} \subset \bigcup_{k=1}^{\mf N_m
  ({\mf R}_{\ell +1}x)} \{T_{m,k}({\mf R}_{\ell+1} x) < \epsilon
\}$. In particular, by the previous displayed equation,
\begin{equation*}
P \Big[ \, \bigcap_{j\ge 1}
\bigcup_{\ell\ge m} \bigcup_{k=1}^{\mf N_m ({\mf R}_\ell x)} 
\Big \{T_{m,k}({\mf R}_\ell x) < \frac 1j \Big\} \,\Big] \;=\; 0\;.
\end{equation*}
Since this equation holds for every $m\ge 1$, we conclude that
\begin{equation}
\label{e21}
P \Big[ \, \bigcap_{m\ge 1} \bigcup_{j\ge 1}
\bigcap_{\ell\ge m} \bigcap_{k=1}^{\mf N_m ({\mf R}_\ell x)} 
\Big \{T_{m,k}({\mf R}_\ell x) \ge \frac 1j \Big\} \,\Big] \;=\; 1\;.
\end{equation}

Since the measure $P_n$ is concentrated on $E^*([0,T], S_{\mf d})$, by
Assertion \ref{ea14}, for every $\ell$, 
\begin{equation*}
P_n \big[ \, ({\mf R}_\ell x) (T) = ({\mf R}_\ell x) (T-) \,\big] \;=\; 1\;.
\end{equation*}
As the set $\{x \in D([0,T], S_{\ell}) : x(T)=x(T-)\}$ is closed for
the Skorohod topology, by Theorem \ref{se14}, for every $\ell\ge 1$,
\begin{equation*}
P \big[ \, ({\mf R}_\ell x) (T) = ({\mf R}_\ell x) (T-) \,\big] \;=\; 1\;,
\end{equation*}
so that
\begin{equation}
\label{e22}
P \Big[ \, \bigcap_{\ell\ge 1}
\{ ({\mf R}_\ell x) (T) = ({\mf R}_\ell x) (T-) \}
\,\Big] \;=\; 1\;.
\end{equation}

Denote by $A$ the intersection of the events with full measure
appearing in \eqref{e19}, \eqref{e20}, \eqref{e21}, \eqref{e22}.  By
Corollary \ref{se09}, any trajectory in $A$ belongs to $E^*([0,T],
S_{\mf d})$. This proves the theorem. 
\end{proof}

In view of condition (b), to prove condition (c) of Theorem
\ref{se12}, it is enough to show that for each $k, m\ge 1$,
\begin{equation}
\label{e28}
\lim_{\epsilon\to 0} \limsup_{\ell \to\infty} \limsup_{n\to\infty} 
P_n \big[ \, T_{m,k}(\mf R_\ell x) < \epsilon \,\big] \;=\; 0\;.
\end{equation}

\section{Applications}
\label{esec4}

In this section, we apply Theorems \ref{se14} and \ref{se12} to prove
the convergence in the soft topology of the order parameter $\bb
X_N(t)$ to a Markov chain $\bb X(t)$ in the two models presented in
the introduction.  We examine first the case of a finite number of
wells. Recall the set-up introduced in Subsection 1.4.

\begin{theorem}
\label{se15}
Let $\nu_N$ be a sequence of probability measures on $\ms E_N$ such
that $\nu_N \circ \Psi^{-1}$ converges to a probability measure $\nu$
on $S_L$, and let $\theta_N$ be a sequence of real positive numbers.
Assume that the sequence of probability measures $\mb P_{\nu_N} \circ
(\bb X^{\rm T}_N)^{-1}$, defined on $D([0,T], S_L)$, converges in the
Skorohod topology to a measure $\bb P_\nu$ which corresponds to a
$S_L$-valued continuous-time Markov chain $\bb X(t)$ sarting from
$\nu$. Suppose, furthermore, that in the time scale $\theta_N$ the
original process $\eta^N(t)$ spends a negligible amount of time in
$\Delta_N$:
\begin{equation*}
\lim_{N\to\infty} \mb E_{\nu_N} \Big[ \int_0^T 
\mb 1\{ \eta (s\, \theta_N) \in \Delta_N \} \, ds \Big] \;=\; 0\;.
\end{equation*}
Then, the sequence of probability measures $\mb P_{\nu_N} \circ \bb
X_N^{-1}$ converges in the soft topology to $\bb P_\nu$.
\end{theorem}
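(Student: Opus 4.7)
The plan is to reduce the statement to Theorem~\ref{se14}. Setting $P_n = \mb P_{\nu_N}\circ \bb X_N^{-1}$ and $P = \bb P_\nu$, it suffices to verify that for every $m\ge 1$ the projected law $P_n\circ \mathfrak R_m^{-1}$ on $D([0,T],S_m)$ converges weakly in the Skorohod topology to $P\circ \mathfrak R_m^{-1}$. Since the order parameter $\bb X_N$ takes values in $S_L\cup\{N\}$, which I identify with $S_L\cup\{\mathfrak d\}\subset S_{\mathfrak d}$, every projection $\mathfrak R_m \bb X_N$ is well defined; here I implicitly view $\bb X_N$ through its image $\mathfrak R_\infty \bb X_N \in E([0,T],S_{\mathfrak d})$ provided by Lemma~\ref{se10}, which coincides with the last-visit process $\bb X^{\rm V}_N$ on $S_L$ and yields the same projections $\mathfrak R_m$.

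The first ingredient is Proposition~4.3 of \cite{bl2}: combined with the hypothesis that $\bb X^{\rm T}_N$ converges in the Skorohod topology to $\bb X$ and that the time spent by $\eta^N(\theta_N\,\cdot)$ in $\Delta_N$ is negligible, it gives Skorohod convergence of $\bb X^{\rm V}_N$ in $D([0,T],S_L)$ to the same Markov chain $\bb X$. Since $\bb X_N$ only visits $S_L\cup\{\mathfrak d\}$, the last visit to $S_m$ coincides with the last visit to $S_L$ for every $m\ge L$, so $\mathfrak R_m \bb X_N = \bb X^{\rm V}_N$; and since $\bb X$ is $S_L$-valued, $\mathfrak R_m \bb X = \bb X$. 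This gives the required Skorohod convergence of $P_n\circ \mathfrak R_m^{-1}$ to $P\circ \mathfrak R_m^{-1}$ for all $m\ge L$.

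For $m<L$ the plan is to exploit the composition identity
\begin{equation*}
\mathfrak R_m \circ \mathfrak R_\ell \;=\; \mathfrak R_m\,,\qquad m\le\ell\,,
\end{equation*}
applied with $\ell = L$, which yields $\mathfrak R_m \bb X_N = \mathfrak R_m \bb X^{\rm V}_N$. Iterating the continuity statement of Lemma~\ref{ea06} provides continuity of $\mathfrak R_m\colon D([0,T],S_L)\to D([0,T],S_m)$ for the Skorohod topology, and combining this with the already-established convergence $\bb X^{\rm V}_N\Rightarrow\bb X$ gives $\mathfrak R_m \bb X_N \Rightarrow \mathfrak R_m \bb X$ in the Skorohod topology. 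The hypotheses of Theorem~\ref{se14} are then satisfied for every $m\ge 1$, and the conclusion follows.

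The main obstacle is verifying the composition identity $\mathfrak R_m = \mathfrak R_m \circ \mathfrak R_\ell$ on $\bb E([0,T],S_{\mathfrak d})$. The intuition is that both trajectories record the last visit of $x$ itself to $S_m$: the process $\mathfrak R_\ell x$ takes a value in $S_m$ precisely when the last visit of $x$ to $S_\ell$ was to a point of $S_m$, equivalently when the last visit of $x$ to $S_m$ coincided with its last visit to $S_\ell$. Making this precise requires a short case analysis following the three alternatives in definition~\eqref{e05} of $\mathfrak R$, keeping careful track of genuine left-limits versus finite soft left-limits $x(\sigma_\ell^x(t)\ominus)$. Once this identity is in hand, the remainder is routine bookkeeping with Lemma~\ref{ea06}, Theorem~\ref{se14} and Proposition~4.3 of \cite{bl2}.
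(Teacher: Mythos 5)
Your proof follows the paper's argument essentially verbatim: Proposition~4.3 of \cite{bl2} gives Skorohod convergence of $\bb X^{\rm V}_N = \mf R_L \bb X_N$, the identity $\mf R_m \bb X_N = \mf R_L \bb X_N$ handles $m\ge L$ (for $N>m$), the continuity of $\mf R_m$ from Lemma \ref{ea06} together with $\mf R_m = \mf R_m\circ\mf R_L$ handles $m<L$, and Theorem \ref{se14} concludes. The one point to correct is the identification of the state $N$ with $\mf d$: the paper keeps $N$ as an element of $S=\bb N$, so that the paths of $\bb X_N$ lie in $D([0,T], S_L\cup\{N\})\subset D([0,T], S_N)$, a closed subspace of $E([0,T], S_{\mf d})$ by Assertion \ref{ea15}, whereas under your identification the paths of $\bb X_N$ jump from $S_L$ directly to $\mf d$ and hence leave $E([0,T], S_{\mf d})$ (as in Example \ref{se08}), the metric $\mb d$ is not defined on them, and replacing $\bb X_N$ by $\mf R_\infty\bb X_N=\bb X^{\rm V}_N$ proves a statement about a different process. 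Reading $N$ as a finite state removes the need for any detour through $\mf R_\infty$ and makes your argument coincide with the paper's.
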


\begin{proof}
Since the sequence of probability measures $\mb P_{\nu_N} \circ (\bb
X^{\rm T}_N)^{-1}$ converges in the Skorohod topology to the measure
$\bb P_\nu$, and since in the time scale $\theta_N$ the time spent by
the chain $\eta^N(t)$ in the set $\Delta_N$ is negligible, by
Proposition 4.3 in \cite{bl2}, the sequence of measures $\mb P_{\nu_N}
\circ (\bb X^{\rm V}_N)^{-1}$ also converges in the Skorohod topology
to $\bb P_\nu$.

Denote by $P_N$ the probability measure on $D([0,T], S_L \cup \{N\})$
induced by the process $\bb X_N(t) = \Psi(\eta(t \theta_N))$ starting
from $\nu_N$. Since, by Assertion \ref{ea15}, $D([0,T], S_L \cup
\{N\})$, $D([0,T], S_L)$ are closed subsets of $E([0,T], S_{\mf d})$,
we may extend $P_N$ and $\bb P_{\nu}$ to $E([0,T], S_{\mf d})$.
Note that $\mb P_{\nu_N} \circ (\bb X^{\rm V}_N)^{-1} = P_N \circ \mf
R_L^{-1}$.  By the previous paragraph, $P_N \circ \mf R_L^{-1} \to \bb
P_\nu$ in the Skorohod topology. Since $P_N \circ \mf R_m^{-1} = P_N
\circ \mf R_L^{-1}$ for $L\le m\le N$, $P_N \circ \mf R_m^{-1} \to \bb
P_\nu = \bb P_\nu \circ \mf R_m^{-1}$ for $m\ge L$ in the Skorohod
topology. On the other hand, for $1\le m < L$, by Lemma \ref{ea06},
$P_N \circ \mf R_m^{-1} \to \bb P_\nu \circ \mf R_m^{-1}$ in the
Skorohod topology.

In conclusion, $P_N \circ \mf R_m^{-1} \to \bb P_\nu \circ \mf
R_m^{-1}$ in the Skorohod topology for all $m\ge 1$. Therefore, by
Theorem \ref{se14}, $P_N$ converges to $\bb P_\nu$ in the soft
topology, a probability measure concentrated on the closed subset
$D([0,T], S_{L})$.
\end{proof}

We may apply Theorem \ref{se15} to the zero-range processes presented
in the introduction.  Fix $x\in S_L$ and a sequence of configurations
$\eta^N$ in $\ms E^x_N$. We proved in \cite{bl3, l1} that starting
from $\Psi_N(\eta^N)$ the trace process $X^{\rm T}_N(t)$ in the time
scale $\theta_N = N^{1+\alpha}$ converges in the Skorohod topology to
a Markov chain $\bb X(t)$ and that the time spent by $\eta^N(t)$ in
the set $\Delta_N$ in the time scale $N^{1+\alpha}$ is
negligible. Therefore, by Theorem \ref{se15}, the rescaled order
parameter $\bb X_N(t)$ converges in the soft topology to $\bb X(t)$.
\smallskip

Consider now the random walk among the random traps on $\bb T^d_N$.
It follows from the results proved in \cite{jlt1, jlt2} that the trace
of $\bb X_N(t)$ on the set $\{1, \dots, L_N\}$, denoted by $\bb X^{\rm
  T}_N(t)$, where $L_N$ is a sequence which increases slowly to
$\infty$, converges in the Skorohod topology to a $K$-process $\bb
X(t)$, and that the time spent by $\eta^N(t)$ in the set $\Delta_N =
\{L_N+1, \dots, V_N\}$ in the time scale $\theta_N = v^{-1}_N$ is
negligible. By Proposition 4.3 in \cite{bl2}, the process which
records the last visit of $\bb X_N(t)$ to $\{1, \dots, L_N\}$, denoted
by $\bb X^{\rm V}_N(t)$, also converges in the Skorohod topology to
$\bb X(t)$.

Fix $j\in \bb N$ and denote by $P_N$ the probability measure on
$D([0,T], S_{V_N})$ induced by the process $\bb X_N(t) = \Psi(\eta(t
\theta_N))$ starting from $j$. By the previous paragraph, $P_N \circ
\mf R_{L_N}^{-1} \to P$ in the Skorohod topology, where $P$ is the
probability measure on $D([0,T], S_{\mf d})$ which corresponds to a
$K$-process starting from $j$. By Lemma \ref{ea06}, $P_N \circ \mf
R_m^{-1} \to P \circ \mf R_m^{-1}$ in the Skorohod topology for every
$m\ge 1$. Therefore, there exists a probability measure $P$ on the
path space $E([0,T], S_{\mf d})$, concentrated on the subset $D([0,T],
S_{\mf d})$, such that $P_N \circ \mf R_m^{-1} \to P \circ \mf R_m^{-1}$ in
the Skorohod topology for all $m\ge 1$. By Theorem \ref{se14}, $P_N$
converges to $P$ in the soft topology.

\smallskip

We conclude this section showing that the conditions (a)--(c) of
Theorem \ref{se12} follow from the convergence of the order parameter
to a Markov process and from the fact that asymptotically the process
spends a negligible amount of time on $\Delta_N$.

\smallskip\noindent{\bf 1. Random walks among traps.}  Consider the
random walk among traps $\eta(t) = \eta^N(t)$ defined in the
introduction, and recall that we denoted by $\pi_N$ the stationary
state. Fix $T>0$ and denote by $\bb Q^N_k$, $k\ge 1$, the probability
measure on $D([0,T], S_{\mf d})$ induced by the random walk $\bb
X_N(t) = \Psi_N(\eta(\theta_N t))$ starting from $k$. Note that time
has been speeded-up by $\theta_N = v_N^{-1}$, where $v_N$, defined in
\eqref{e29}, is the probability to escape from a ball of radius
$\ell_N$, and that the measure $\bb Q^N_k$ is concentrated on the set
$D([0,T], S_{V_N})$.


It is clear from this last observation that $\Lambda_T(x)=0$, $\bb
Q^N_k$--\,almost surely. On the other hand, if we denote by $\tau_j$,
$j\ge 1$, the holding times of the trajectory $x(t)$, $x (t)$ is
discontinuous at $T$ if and only if $\tau_1 + \cdots + \tau_j = T$ for
some $j$. Since, $\bb Q^N_k [\tau_1 + \cdots + \tau_j = T]=0$ for each
$j\ge 1$, $\bb Q^N_k$ is concentrated on the set $D^*([0,T], S_{\mf
  d})$.

Denote by $P_N$ the probability measure on $E([0,T], S_{\mf d})$
defined by $P_N = \bb Q^N_k \circ \mf R^{-1}_\infty$. By the last
observation, $P_N$ is concentrated on $E^*([0,T], S_{\mf d})$.  We
claim that the sequence $P_N$ fulfills all the assumptions of Theorem
\ref{se12}. We start with assumption (a). Since $\mf R_\ell x \le x$,
it is enough to show that
\begin{equation}
\label{e25}
\lim_{m\to\infty} \limsup_{N\to\infty} E_{P_N}
\Big[ \int_0^T \mb 1\{ x (s) \ge m\} \, ds \Big]
\;=\; 0\;.
\end{equation}
By definition of $P_N$, 
\begin{equation*}
\begin{split}
E_{P_N} \Big[ \int_0^T \mb 1\{ x (s) \ge m\} \, ds \Big]
\; & =\; E_{\bb Q^N_k} \Big[ \int_0^T \mb 1\{ x (s) \ge m\} \, ds \Big] \\ 
\; & \le\; \frac 1{\pi_N(k)} \sum_{j\ge 1} \pi_N(j) \, 
E_{\bb Q^N_j} \Big[ \int_0^T \mb 1\{ x (s) \ge m\} \, ds \Big]\;.  
\end{split}
\end{equation*}
Since $\pi_N$ is the stationary state, the previous sum is equal to
$T\, \pi_N\{S_{m}^c\}$, where, we recall, $S_m =\{1, \dots, m\}$. As,
for every $k\ge 1$,
\begin{equation*}
\lim_{m\to\infty} \limsup_{N\to\infty} \frac{\pi_N\{S_{m}^c\}}
{\pi_N(k)}\;=\; 0\;,
\end{equation*}
condition \eqref{e25} is in force.

We first prove Conditions (b) and (c) of Theorem \ref{se12} under
the assumption that $\theta := \sup_{N\ge 1} \theta_N$ is finite. This
is the case of the random walk on a torus $\bb T^d_N$ in dimension
$d\ge 3$. 

Since $\mf N_m(\mf R_\ell x) \le \mf N_m(x)$, $\ell\ge 1$, to prove
condition (b) of Theorem \ref{se12}, it is enough to show that for
each $m\ge 1$,
\begin{equation}
\label{e26}
\lim_{j \to\infty}\limsup_{N\to\infty} 
P_N \big[ \, \mf N_m (x) \ge j \,\big] \;=\; 0\;.
\end{equation}
The above probability is equal to $\bb Q^N_k [ \, \mf N_m (x) \ge j
\,]$.  Denote by $\tau^m_i$, $i\ge 1$, the holding times at $m$. This
is a sequence of i.i.d. mean $\theta^{-1}_N \, W_m$ exponential random
variables. Since $\{ \mf N_m (x) \ge j\} \subset \{\tau^m_1 + \cdots +
\tau^m_j\le T\}$, the previous probability is bounded by $\bb Q^N_k
\big [ \, \tau^m_1 + \cdots + \tau^m_j\le T \, \big] \;\le \; P \big [
\, T_1 + \cdots + T_j\le T \, \big]$, where $T_i$, $i\ge 1$, is a
sequence of i.i.d. mean $\theta^{-1} \, W_m$ exponential random
variables. This expression vanishes as $j\uparrow\infty$, which proves
\eqref{e26}.

In view of \eqref{e28} and since $T_{m,j}(\mf R_\ell x)$, $j\ge 1$,
are identically distributed, to prove condition (c) of Theorem
\ref{se12} we need to show that for each $m\ge 1$,
\begin{equation*}
\lim_{\epsilon\to 0} \limsup_{\ell\to\infty} \limsup_{N\to\infty} 
P_N \big[ \, T_{m,1}(\mf R_{\ell} x) < \epsilon \,\big] \;=\; 0\;.
\end{equation*}
Since $T_{m,1}({\mf R}_\ell x) \ge T_{m,1}(x)$, $\ell \ge m \ge 1$, to
prove condition (c) of Theorem \ref{se12} we just have to show that
for each $m\ge 1$,
\begin{equation}
\label{e27}
\lim_{\epsilon\to 0} \limsup_{N\to\infty} 
P_N \big[ \, T_{m,1}(x) < \epsilon \,\big] \;=\; 0\;.
\end{equation}
With the notation introduced in the previous paragraph, the
probability above is equal to $\bb Q^N_k [ \, \tau^m_{1} < \epsilon
\,]$. As $\tau^m_{1}$ is a mean $\theta^{-1}_N \, W_m$ exponential
random variable and as $\theta_N \le \theta$, the previous probability
is less than or equal to $P [ \, T < \epsilon \,]$, where $T$ is a
mean $\theta^{-1} \, W_m$ exponential random variable. This proves
condition (c) of Theorem \ref{se12} in the case where $\sup_N
\theta_N<\infty$. \smallskip

We conclude the analysis proving conditions (b) and (c) of Theorem
\ref{se12} without the assumption that $\sup_N \theta_N <
\infty$. Recall that we denote by $A_N$ the set of the first $M_N$
deepest traps, $A_N = \{x^N_1, \dots, x^N_{M_N}\}$. Choose a sequence
$M_N$ so that $M^2_N \, \ell^d_N \ll N^d$. In this case with a
probability converging to one the balls $B(x^N_i, \ell_N)$, $1\le i\le
M_N$, are disjoints.  Let $U^N_1$ be the time of the first visit to
$A_N$, $U^N_1 = \inf\{ t\ge 0 : \eta(t) \in A_N\}$, and define
recursively the sequence of stopping times $U^N_j$, $j\ge 1$, by
\begin{equation*}
U^N_{j+1} \;=\; \inf \Big \{ t\ge U^N_j : \eta(t) \in A_N \,,\,
\exists \, U^N_j\le s\le t \; \text{ s.t. } \eta(s)\not\in B_N \Big\}\;,
\end{equation*}
where $B_N = \cup_{i=1}^{M_N} B(x^N_i , \ell_N)$. Hence, the sequence
$U^N_j$ represents the successive visits to the deepest traps after
escaping from these traps.  We refer to the time interval $[U^N_j,
U^N_{j+1})$ as the $j$-th excursion.

For $m\ge 1$, let $e_1(m) = \min \{j\ge 1: \eta(U^N_j)=x^N_m\}$ be the
first excursion to the trap $x^N_m$. Define recursively $e_i(m)$,
$i\ge 1$, by
\begin{equation*}
e_{i+1}(m) = \min \{j> e_i(m): \eta(U^N_j)=x^N_m\}\;.
\end{equation*}
Note that we may have $e_{i+1}(m) = e_i(m)+1$, as the process may
escape from the trap $x^N_m$ and then return to it before visiting any
other deep trap.  We refer to $[U^N_{e_i(m)}, U^N_{e_i(m) + 1})$
as the $i$-th excursion to $x^N_m$.

Let $G^N_i$, $i\ge 1$, be the number of visits to $x^N_m$ during the
$i$-th excursion to $x^N_m$, in other words, $G^N_i$ is the number of
visits to $x^N_m$ in the time interval $[U^N_{e_i(m)},
U^N_{e_i(m)+1})$. The random variables $G^N_i$, $i\ge 1$, are
i.i.d. and have a mean $\theta_N$ geometric distribution.  Let
$T^N_{i,p}$, $1\le p\le G^N_i$, be the $p$-th holding time at $x^N_m$
in the time interval $[U^N_{e_i(m)}, U^N_{e_i(m)+1})$.  The random
variables $T^N_{i,p}$ are i.i.d. and have a mean $W_m/\theta_N$
exponential distribution.

Fix $N$ large enough for $M_N \ge \ell$ so that $\mf N_m(\mf R_\ell x)
\le \mf N_m(\mf R_{M_N} x)$. In this case, since for the trajectory
$\mf R_{M_N} x$ all visits of $\eta(t)$ to $x^N_m$ in the time
interval $[U^N_{e_i(m)}, U^N_{e_i(m) + 1})$ are counted as a single
visit,
\begin{equation*}
\{\mf N_m(\mf R_\ell x) \ge j\} \;\subset\; \Big\{
\sum_{i=1}^j \sum_{p=1}^{G^N_i} T^N_{i,p} \le T \Big\}\;.
\end{equation*}
It follows from the conclusion of the last paragraph that $\sum_{1\le
  p \le G^N_i} T^N_{i,p}$, $i\ge 1$, forms a sequence of i.i.d. mean
$W_m$ exponential random variables. This proves condition (b) of
Theorem \ref{se12}.

In view of \eqref{e28} and since $T_{m,j}(\mf R_\ell x)$, $j\ge 1$,
are identically distributed, to prove condition (c) of Theorem
\ref{se12} we need to show that for each $m\ge 1$,
\begin{equation*}
\lim_{\epsilon\to 0} \limsup_{\ell\to\infty} \limsup_{N\to\infty} 
P_N \big[ \, T_{m,1}(\mf R_{\ell} x) < \epsilon \,\big] \;=\; 0\;.
\end{equation*}
Since $T_{m,1}(\mf R_\ell x) \ge T_{m,1}(\mf R_{M_N} x)$, it is in
fact enough to show that for each $m\ge 1$,
\begin{equation*}
\lim_{\epsilon\to 0} \limsup_{N\to\infty} 
P_N \big[ \, T_{m,1}(\mf R_{M_N} x) < \epsilon \,\big] \;=\; 0\;.
\end{equation*}
This probability is equal to $\bb Q^N_k [ \, T_{m,1}(\mf R_{M_N} x) <
\epsilon \,]$ and $T_{m,1}(\mf R_{M_N} x) \ge \sum_{p=1}^{G^N_1}
T^N_{1,p}$, a mean $W_m$ exponential random variable. Therefore,
\begin{equation*}
P_N \big[ \, T_{m,1}(\mf R_{M_N} x) < \epsilon \,\big] \;\le\;
P[T < \epsilon]\;,
\end{equation*}
where $T$ is a mean $W_m$ exponential random variable, which proves
condition (c) of Theorem \ref{se12}.

\smallskip
\noindent{\bf 2. Zero-range processes.}
Consider the zero-range process $\eta(t)=\eta^N(t)$ introduced in
Section \ref{esec1}.  Denote by $\mb P_\eta$, $\eta\in E_{L,N}$, the
probability measure on the path space $D(\bb R_+, E_{L,N})$ induced by
the Markov chain $\eta(t)$ starting from $\eta$. Expectation with
respect to $\mb P_\eta$ is denoted by $\mb E_\eta$.

Fix $T>0$, $1\le x\le L$, and a sequence $\{\eta^N: N\ge 1\}$ of
configurations in $\ms E^x_N$, $\eta^N\in \ms E^x_N$. Denote by $\bb
Q^N$ the probability measure on $D([0,T], S_{\mf d})$ induced by the
process $\bb X_N(t) = \Psi_N(\eta(N^{1+\alpha} t))$ starting from
$\eta^N$. Note that time has been speeded-up by $N^{1+\alpha}$ and that
the measure $\bb Q^N$ is concentrated on the set $D([0,T], S_{N})$.

It is clear from this last observation that $\Lambda_T(x)=0$, $\bb
Q^N$--\,almost surely. On the other hand, if we denote by $\tau_j$,
$\tau^\eta_j$, $j\ge 1$, the holding times of the processes $\bb
X_N(t)$, $\eta(N^{\alpha+1} t)$, respectively, $\bb X_N (t)$ is
discontinuous at $T$ if and only if $\tau_1 + \cdots + \tau_j = T$ for
some $j$. Since, $\tau_1 + \cdots + \tau_j = \tau^\eta_1 + \cdots +
\tau^\eta_k$ for some $k\ge j$ and since $\mb P_{\eta^N} [\tau^\eta_1
+ \cdots + \tau^\eta_\ell = T]=0$ for all $\ell\ge 1$, we have that
$\bb Q^N [\tau_1 + \cdots + \tau_j = T]=0$ for each $j\ge
1$. Therefore, $\bb Q^N$ is concentrated on the set $D^*([0,T], S_{\mf
  d})$.

Denote by $P_N$ the probability measure on $E([0,T], S_{\mf d})$
defined by $P_N = \bb Q^N \circ \mf R^{-1}_\infty$. By the last
observation, $P_N$ is concentrated on $E^*([0,T], S_{\mf d})$.  We
claim that the sequence $P_N$ fulfills all the assumptions of Theorem
\ref{se12}. We start with assumption (a). As in the previous example,
it is enough to show that \eqref{e25} holds.  By definition of $P_N$,
for $N\ge m\ge L$,
\begin{equation*}
\begin{split}
E_{P_N} \Big[ \int_0^T \mb 1\{ x (s) \ge m\} \, ds \Big]
\; &=\; E_{\bb Q^N} \Big[ \int_0^T \mb 1\{ x (s) \ge m\} \, ds
\Big] \\
\; & =\; \mb E_{\eta^N} \Big[ \int_0^T 
\mb 1\{ \eta (s\, N^{\alpha +1}) \in \Delta_N \} \, ds \Big] \;.
\end{split}
\end{equation*}
Therefore, \eqref{e25} follows from assertion (M3) of \cite[Theorem
2.4]{bl3}.

We turn to condition (b) of Theorem \ref{se12}. As in the example of
random walks among traps, it is enough to prove \eqref{e26}.  Let $\ms
E_N = \ms E^1_N \cup \cdots \cup \ms E^L_N$.  Denote by $T_j$, $j\ge
1$, the holding times between successive visits to the metastable
sets: $T_1 \;=\; \inf\{t>0 : \eta (t) \in \ms E_N\}$,
\begin{equation*}
T_{j+1} \;=\; \inf\{t>0 : \eta (\mb T_j + t) \in \ms E_N
\setminus \ms E^{\mb y_j}_N \}\;, \quad \mb T_j
\;=\; T_1 + \cdots T_j\;,\quad j\ge 1\;.
\end{equation*}
where $\mb y_j = \Psi(\eta(\mb T_j))$.  Denote by $T^{\ms E}_j$, $j\ge
1$, the same sequence for the trace process $\eta^{\rm T} (t)$,
$T^{\ms E}_1 \;=\; \inf\{t>0 : \eta^{\rm T} (t) \in \ms E_N\}$.

For $1\le k\le L$, let $e_1(k) = \min \{j\ge 1: \eta(\mb T_j)\in \ms
E^k_N\}$ be the first visit to the metastable set $\ms E^k_N$. Define
recursively $e_i(k)$, $i\ge 1$, by 
\begin{equation*}
e_{i+1}(k) = \min \{j> e_i(k): \eta(\mb T_j) \in \ms E^k_N\}\;.
\end{equation*}

It is clear that $T^{\ms E}_j \le T_j$, $j\ge 1$, and that $\{\mf N_k
(\bb X_N) \ge j\} \subset \{T_{e_1(k)} + \dots + T_{e_j(k)} \le T\}
\subset \{T^{\ms E}_{e_1(k)} + \dots + T^{\ms E}_{e_j(k)} \le
T\}$. Since the sequence $T^{\ms E}_{e_j(k)}$ represents the holding
times at $k$ for the process $\bb X^{\rm T}_N(t) = \Psi_N(\eta^{\rm
  T}(N^{1+\alpha} t))$, and since the process $\bb X^{\rm T}_N(t)$
converges in the Skorohod topology to a Markov process on $\{1, \dots,
L\}$,
\begin{equation*}
\limsup_{N\to \infty} \mb P_{\eta^N} \big[ T^{\ms E}_{e_1(k)} + \dots + T^{\ms
  E}_{e_j(k)} \le T \big] \;\le\; P \big[ S_1 + \dots + S_j \le T \big]\;,
\end{equation*}
where $S_i$, $i\ge 1$, is a sequence of non-degenerate
i.i.d. exponential random variables. As $j\uparrow\infty$, this
expression vanishes, which proves \eqref{e26}.

It remains to prove assertion (c) of Theorem \ref{se12}. As argued
in the previous example, it is enough to show that \eqref{e27} holds
for every $m\ge 1$. With the notation introduced above, it means that
we have to show that
\begin{equation*}
\lim_{\epsilon\to 0} \limsup_{N\to\infty}   
\mb P_{\eta^N} \big[ T_{e_1(m)} < \epsilon \big]\;=\;0\;.
\end{equation*}
Since $T^{\ms E}_{e_1(m)}\le T_{e_1(m)}$, it is enough to prove the
previous assertion with $T^{\ms E}_{e_1(m)}$ replacing $T_{e_1(m)}$.
This follows from the convergence of $T^{\ms E}_{e_1(m)}$ to a
non-degenerate exponential distribution.

\smallskip\noindent{\bf Acknowledgements.} The author would like to
thank the referees for their comments which helped to improve the
first version of this article.


\begin{thebibliography}{99}
\bibitem{bl2} J. Beltr\'an, C. Landim: Tunneling and metastability of
  continuous time Markov chains. J. Stat. Phys. {\bf 140}, 1065--1114,
  (2010).

\bibitem{bl3} J. Beltr\'an, C. Landim: Metastability of reversible
  condensed zero range processes on a finite
  set. Probab. Th. Rel. Fields {\bf 152} 781--807 (2012)

\bibitem{bl4} J. Beltr\'an, C. Landim: Metastability of reversible finite
  state Markov processes. Stoch. Proc. Appl. {\bf 121} 1633--1677
  (2011).

\bibitem{bl5} J. Beltr\'an, C. Landim: Tunneling of the Kawasaki
  dynamics at low temperatures in two dimensions.  To appear in
  Ann. Inst. H. Poincar\'e, Probab. Statist. arXiv:1109.2776 (2011).

\bibitem{bl7} J. Beltr\'an, C. Landim: Tunneling and metastability of
  continuous time Markov chains II. J. Stat. Phys. {\bf 149}, 598--618
(2012).

\bibitem{bl9} J. Beltr\'an, C. Landim: A Martingale approach to
  metastability, To appear in Probab. Th. Rel. Fields. DOI
  10.1007/s00440-014-0549-9, arXiv:1305.5987 (2014).

\bibitem{clmst} P. Caputo, H. Lacoin, F. Martinelli, F. Simenhaus and
  F. L. Toninelli: Polymer dynamics in the depinned phase:
  metastability with logarithmic barriers. Probab. Theory Related
  Fields {\bf 153}, 587--641 (2012).

\bibitem{cmt} P. Caputo, F. Martinelli and F. L. Toninelli: On the
  approach to equilibrium for a polymer with adsorption and
  repulsion. Elect. J. Probab. {\bf 13}, 213--258 (2008).

\bibitem{C1} K. L. Chung: \emph {Markov Chains with Stationary
    Transition Probabilities}. Second edition. Die Grundlehren der
  mathematischen Wissenschaften, Band 104 Springer-Verlag New York,
  Inc., New York 1967

\bibitem{F1} D. Freedman: \emph {Markov Chains}. Corrected reprint of
  the 1971 original. Springer-Verlag, New York-Berlin, 1983.

\bibitem{jlt1} M. Jara, C. Landim, A. Teixeira; Quenched scaling
  limits of trap models. Ann. Probab. {\bf 39}, 176--223 (2011).

\bibitem{jlt2} M. Jara, C. Landim, A. Teixeira; Universality of trap
  models in the ergodic time scale. Online available at
  http://arxiv.org/abs/1208.5675 (2012). To appear in
  Ann. Probab. (2014)

\bibitem{lt1} H. Lacoin, A. Teixeira; A mathematical perspective on
  metastable wetting. arXiv:1312.7732 (2013).

\bibitem{l1} C. Landim: Metastability for a non-reversible dynamics:
  the evolution of the condensate in totally asymmetric zero range
  processes.  Commun. Math. Phys.  {\bf 330}, 1--32 (2014)

\bibitem{Law91} G.~F. Lawler; \emph{Intersections of random
    walks}. Probability and its Applications, Birkh{\"a}user Boston
  Inc., Boston, MA, 1991.

\end{thebibliography}
\end{document}